\documentclass[11pt]{amsart}
\usepackage{amssymb}
\usepackage{amsmath}
\usepackage{amsfonts}
\usepackage{mathrsfs}
\usepackage{psfrag}
\usepackage{verbatim}
\usepackage{color}
\usepackage{bm}
\usepackage[usenames,dvipsnames]{xcolor}
\usepackage{dsfont}
\usepackage{overpic}
\usepackage{subcaption}

\usepackage[colorlinks=true]{hyperref}
\hypersetup{urlcolor=blue, citecolor=ForestGreen}

\DeclareGraphicsExtensions{}
\theoremstyle{plain}
\newtheorem{theorem}{Theorem}[section]
\newtheorem{corollary}[theorem]{Corollary}

\newtheorem{lemma}[theorem]{Lemma}
\newtheorem{proposition}[theorem]{Proposition}

\theoremstyle{definition}
\newtheorem{assumption}[theorem]{Assumption}

\theoremstyle{remark}
\newtheorem{remark}[theorem]{Remark}

\newcommand{\eps}{\varepsilon}

\newcommand{\kap}{\varkappa}

\newcommand{\BR}{\mathbb{R}}
\newcommand{\BP}{\mathbb{P}}
\newcommand{\BE}{\mathbb{E}}
\newcommand{\BZ}{\mathbb{Z}}
\newcommand{\BN}{\mathbb{N}}

\newcommand{\filt}{\mathscr{F}}

\newcommand{\sfP}{\mathsf{P}}

\newcommand{\sfN}{\mathsf{N}}
\newcommand{\sfM}{\mathsf{M}}

\newcommand{\cc}{\mathsf{c}}
\newcommand{\pp}{\mathsf{p}}
\newcommand{\qq}{\mathsf{q}}

\newcommand{\gDk}{\mathsf{J}} 



\textwidth = 6.7in
\textheight = 8.5in
\headheight = 0in
\hoffset = 0in
\voffset = 0in
\oddsidemargin = -0.2in
\evensidemargin = -0.2in
\topmargin = 0in
\headsep = 0.25in

\begin{document}

\title[Asymptotic Analysis of nonlinear systems with noise and sampling effects]{Fluctuation analysis for a class of nonlinear systems with fast periodic sampling and small state-dependent white noise
}

\author[Shivam Dhama and Chetan D. Pahlajani]{}
\address{Discipline of Mathematics\\ Indian Institute of Technology Gandhinagar}
\email{shivam.dhama@iitgn.ac.in, maths.shivam@gmail.com} 

\address{Discipline of Mathematics\\ Indian Institute of Technology Gandhinagar}
 \email{cdpahlajani@iitgn.ac.in}
 \subjclass[2010]{Primary: 60F17.}
 \keywords{Stochastic differential equation, hybrid dynamical system, periodic sampling, multiple scales}
\thanks{$^*$ Corresponding author: Shivam Dhama}

\maketitle

\centerline{\scshape Shivam Dhama$^*$ and Chetan D. Pahlajani}
\medskip
{\footnotesize
 
 \centerline{Discipline of Mathematics, Indian Institute of Technology Gandhinagar, Palaj, Gandhinagar 382055, India}
} 
\begin{center}
\rule{17cm}{.01mm}
\end{center}
\begin{abstract}

We consider a nonlinear differential equation under the combined influence of small state-dependent Brownian perturbations of size $\varepsilon$, 
and fast periodic sampling with period $\delta$; $0<\varepsilon, \delta \ll 1$.
Thus, state samples (measurements) are taken every $\delta$ time units, and 
the instantaneous rate of change of the state depends 
on its current value \textit{as well as} its most recent sample.
We show that the resulting stochastic process indexed by 
$\varepsilon,\delta$, can be approximated, as $\varepsilon,\delta \searrow 0$, by an ordinary differential equation ({\sc ode}) with vector field obtained by 
replacing the most recent sample by the current value of the state. We next analyze the fluctuations of the stochastic process about the limiting {\sc ode}. Our main result asserts that, for the case when $\delta \searrow 0$ at the same rate as, or faster than, $\varepsilon \searrow 0$, the rescaled fluctuations can be approximated in a suitable strong (pathwise) sense by a limiting stochastic differential equation ({\sc sde}). This {\sc sde} varies depending on the exact rates at which $\varepsilon,\delta \searrow 0$. The key contribution here involves computing the \textit{effective drift} term capturing the interplay between noise and sampling in the limiting {\sc sde}. The results essentially provide a first-order perturbation expansion, together with error estimates, for the stochastic process of interest. 
Connections with the performance analysis of feedback control systems with sampling are discussed and illustrated numerically through a simple example.

\end{abstract}
\begin{center}
\rule{17cm}{.01mm}
\end{center}

\section{Introduction}\label{S:Introduction}

In certain applications, notably computer control of physical systems, one naturally encounters differential equations with \textit{periodic sampling}; by this, we mean an equation of the form
\begin{equation}\label{E:ode-sampled}
\frac{dx}{dt}=c\left(x(t),x(\delta\lfloor t/\delta\rfloor)\right), \quad x(0)=x_0 \in \BR^n,
\end{equation}
where $t \in [0,\infty)$ represents time, $x(\cdot):[0,\infty) \to \BR^n$ represents the ``state" of a system, $\delta \ll 1$ is a positive parameter, $\lfloor\cdot\rfloor$ denotes the integer floor function, and $c:\BR^n \times \BR^n \to \BR^n$ is a sufficiently regular mapping. Thus, \textit{samples} (measurements) of $x(\cdot)$ are taken every $\delta$ time units, and the instantaneous value of $dx/dt$ at time $t \in [0,\infty)$ depends on both $x(t)$ and also on the most recent sample of $x(\cdot)$, viz., $x(\delta\lfloor t/\delta \rfloor)$. Since the first component of the function $c$ changes continuously with time whereas the second one is updated only at discrete-time instants, we have a \textit{hybrid dynamical system} \cite{GST-HybDynSys-book}.

If this system is subjected to small Brownian perturbations, then one should consider the stochastic differential equation ({\sc sde}) \cite{KS91,Oksendal} 
\begin{equation}\label{E:Blago}
dX_t^{\eps,\delta}=c(X_t^{\eps,\delta},X^{\eps,\delta}_{\delta\lfloor t/\delta\rfloor})dt + \eps\sigma(X_t^{\eps,\delta})dW_t,\quad X_0^{\eps,\delta}=x_0 \in \BR^n,
\end{equation}
where $\sigma:\BR^n \to \BR^{n\times n}$ satisfies suitable regularity conditions, $W_t$ is an $n$-dimensional Brownian motion, and $0 < \eps, \delta \ll 1$. Our main results study the limiting behavior of $X_t^{\eps,\delta}$ as $\eps,\delta \searrow 0$ when $c$ takes the specific form $c(x,y)=f(x)+g(x)\kappa(y), x,y\in \BR^n$, with the functions $f:\BR^n \to \BR^n$, $g:\BR^n \to \BR^{n\times m}$, $\kappa:\BR^n \to \BR^m$ satisfying certain regularity conditions stated in  Assumptions \ref{A:Smooth-LinearGrow} and \ref{A:Boundedness}. More precisely, we show (Theorem \ref{T:LLN-II}) that for any fixed $T<\infty$, the process $X^{\eps,\delta}_t$ converges, as $\eps,\delta \searrow 0$, to the solution $x_t$ of the equation $x_t  = x_0 + \int_0^t [f(x_s)+g(x_s)\kappa(x_s)]\thinspace ds$ on the interval $[0,T]$.\footnote{We will be indicating time dependence by a subscript, e.g., we will write $x_t$ in place of $x(t)$.} We next attempt to understand the limiting behavior of the (rescaled) fluctuations of the process $X_t^{\eps,\delta}$ about $x_t$. Depending on the relative rates at which $\eps,\delta \searrow 0$, we have three different regimes. For the cases when $\delta \searrow 0$ faster than or at the same rate as $\eps \searrow 0$ (Regimes 1 and 2 in what follows, see \eqref{E:c} below), we are able to show (Theorem \ref{T:fluctuations-R-1-2}) that for any $T<\infty$, the fluctuation process $\eps^{-1}(X^{\eps,\delta}_t - x_t)$ can be well approximated (in a suitable pathwise sense) on the time interval $[0,T]$ by a diffusion process $Z_t$ given by a time-inhomogeneous linear {\sc sde}. Part of the novelty of the result is that the limiting {\sc sde} for $Z_t$ contains an extra \textit{effective drift} term encoding the limiting cumulative effects of small noise and fast sampling.

%
%
%


One way to mathematically motivate the problem at hand comes from the work of Blagoveshchenskii \cite{blagoveshchenskii1962diffusion} and material presented in chapter 2 of \cite{FW_RPDS}. In these references, the authors consider dynamical systems of the form $\dot{y}=b(y), y(0)=y_0 \in \BR^n$, 
and study small random perturbations given by {\sc sde} of the form
\begin{equation*}
dY_t^{\eps}=b(Y_t^{\eps})dt+\eps \sigma(Y_t^{\eps})dW_t , \quad Y_0^{\eps}=y_0.
\end{equation*}
Here, $b:\BR^n \to \BR^n$, $\sigma:\BR^n \to \BR^{n\times n}$ are sufficiently regular, $W_t$ is an $n$-dimensional Brownian motion, and $0<\eps \ll 1$. 
Note that the {\sc ode} for $y$ and the {\sc sde} for $Y^\eps_t$ can be thought of as limiting cases of \eqref{E:ode-sampled} and \eqref{E:Blago} when $\eps$ is held fixed, $\delta \searrow 0$, and we write $b(y)=c(y,y)$ for $y \in \BR^n$. In \cite{blagoveshchenskii1962diffusion,FW_RPDS}, it is shown that $Y_t^\eps$ converges in probability to $y_t$ as $\eps \searrow 0$ on any fixed time interval $[0,T]$. Further, under certain stringent conditions on $b, \sigma$, e.g., boundedness of all partial derivatives up to and including order $n+1$ (here, $n \in \BN$), the solution process $Y_t^\eps$ can be expanded in powers of $\eps$ as  
$Y_t^\eps=Y^{(0)}_t+\eps Y_t^{(1)}+ \dots + \eps^n Y_t^{(n)}+ o(\eps^n)$ for $t \in [0,T]$ where $Y^{(0)}_t=y_t$ and $Y_t^{(1)},\dots, Y_t^{(n)} $ satisfy a system of {\sc sde} with each $Y^{(k)}_t$ depending on $Y^{(0)}_t,\dots,Y^{(k-1)}_t$, $1 \le k \le n$. In particular, letting $Db$ denote the Jacobian matrix of the drift $b$, the process $Y^{(1)}_t$ solves the {\sc sde} 
\begin{equation}\label{E:BF-sde}
dY_t^{(1)}=Db(y_t)Y_t^{(1)}dt+\sigma(y_t)dW_t, \quad Y_0^{(1)}=0,
\end{equation}
obtained by linearizing the dynamics of $Y^\eps_t$ about $y_t$. Thus, the process $Y^\eps_t$ can be approximated on the time interval $[0,T]$ to within terms of order $\eps^2$ by the process $y_t + \eps Y^{(1)}_t$, implying that $\eps^{-1}\left(Y^\eps_t - y_t\right)$ converges to $Y^{(1)}_t$ solving \eqref{E:BF-sde} as $\eps \searrow 0$. Returning to \eqref{E:ode-sampled},\eqref{E:Blago}, one might say that our problem of interest involves investigating how the results of \cite{blagoveshchenskii1962diffusion,FW_RPDS} need to be modified to include fast sampling effects.

Since our investigations involve the asymptotic analysis of {\sc sde} with small parameters, we take a moment to briefly review a few of the numerous studies on multiscale stochastic processes. For stochastic processes with multiple temporal scales, much of the work has been organized around the so-called \textit{averaging principle}: rigorous methods for approximating dynamics of slowly-varying quantities by taking long-term averages in quickly-varying quantities. The classical work here dates back to Khasminskii \cite{has1966stochastic,
khasminskij1968principle}, while some recent contributions include  \cite{freuidlin1994random,pardoux2001poisson,
pardoux2003poisson,pardoux2005poisson,bakhtin2004diffusion,AthreyaBorkerSureshSundaresan19}, among several others. 
The papers \cite{rockner2021averaging,rockner2021diffusion,KS2014} explore the averaging principle and fluctuation analysis (about the averaged limit) for stochastic systems with multiple time scales and multiple small parameters in different asymptotic regimes, based on the relative rates at which the small parameters approach zero. The interplay between large deviations and homogenization for some problems with small noise and multiple spatial scales is explored in \cite{FreidlinSowers}, while the interaction of large deviations and averaging is studied in \cite{veretennikov1999large,veretennikov2000large,Spil-AppMathOptim}. Averaging and fluctuation analysis for stochastic partial differential equations have been addressed in \cite{cerrai2009averaging,cerrai2009khasminskii,wang2012average,shi2021homogenization}. Finally, problems with small noise and control magnitude for controlled diffusion process are discussed in 
\cite{AriAnupBor18}, while \cite{AnupBorkar09} explores the limiting behavior of invariant densities of small noise diffusions with state-dependent random perturbations. Many of the results above can be interpreted as results along the lines of the classical limit theorems of probability: Law of Large Numbers ({\sc lln}) when one shows almost sure convergence to a deterministic limit, the Central Limit Theorem ({\sc clt}) when one looks at convergence in distribution of fluctuations about this limit, or  Large Deviation Principle ({\sc ldp}) when the probability of rare events is being quantified. 

Although various types of multiscale problems for stochastic processes have been extensively studied, as may be evident from the rather abbreviated list of references above, it appears that the interaction between small noise and fast sampling has received very little attention. Further, such questions arise very naturally in control theory when a physical system governed by an {\sc ode} is controlled by a digital computer (which evolves in discrete time) \cite{YuzGoodwin-book}, while being subjected to small white noise perturbations. For such \textit{sampled-data systems}, the so-called ``emulation approach" is frequently used, where a controller designed in continuous time is implemented using a sample-and-hold device. By the latter, we mean the following: the state of the system is \textit{sampled} at closely spaced discrete time instants, the control action is computed based on these measurements, and the control input is then \textit{held} fixed until the next sample is taken. Much of the work in the relevant literature, see for instance \cite{Khalil-TAC-2004,nesic2009explicit,dong2020asymptotic}, focuses on sufficient conditions on system parameters and time between samples which ensure that the system with sampling retains the desired qualitative properties of the idealized system (with ``infinitely fast" sampling).

A few comments about the contribution of the present work, and how it fits against the backdrop of the literature described above, are now in order. As noted earlier, we focus in this article on the asymptotic behavior, as $\eps,\delta \searrow 0$, of the process $X_t^{\eps,\delta}$ given by equation \eqref{E:Blago}, when $c$ takes the specific form $c(x,y)=f(x)+g(x)\kappa(y)$ with suitable assumptions on $f,g,\kappa$. Our first result, Theorem \ref{T:LLN-II}, which asserts closeness of $X^{\eps,\delta}_t$ to $x_t$ over finite time intervals in the limit $\eps,\delta \searrow 0$, can be interpreted as a {\sc lln} result. An analysis of the fluctuations of $X^{\eps,\delta}_t$ about $x_t$ requires one to carefully account for the relative rates at which $\eps,\delta \searrow 0$. Assuming that $\delta=\delta_\eps$, we follow \cite{FreidlinSowers,KS2014} and consider three different regimes
\begin{equation}\label{E:c}
\cc \triangleq \lim_{\eps \searrow 0}\delta_\eps/\eps \begin{cases}=0 & \text{Regime 1,}\\ \in (0,\infty) & \text{Regime 2,}\\ = \infty & \text{Regime 3.}\end{cases}
\end{equation}
Our next result, Theorem \ref{T:fluctuations-R-1-2}---which is the principal contribution of this paper---proves, for regimes 1 and 2, convergence of the rescaled fluctuations $Z^{\eps,\delta}_t \triangleq \eps^{-1}(X^{\eps,\delta}_t - x_t)$ to a limiting time-inhomogeneous diffusion process $Z_t$ over finite time intervals in the limit as $\eps,\delta \searrow 0$. Part of the challenge here is identification of the limiting {\sc sde} for $Z_t$, more specifically, identification of the extra effective drift term which captures the cumulative limiting effect of small noise and fast sampling.\footnote{In terms of identifying an effective drift in the limiting dynamics, our results appear similar to those of \cite{KS2014}. One notes, however, that the effective drift in \cite{KS2014} is found by understanding the interplay between small noise and fast time scales, while we obtain our effective drift by analyzing the interplay between small noise and fast sampling.} The result Theorem \ref{T:fluctuations-R-1-2} is certainly in the spirit of the {\sc clt}, given that it studies asymptotic behavior of fluctuations. We note, however, that since Theorem \ref{T:fluctuations-R-1-2} estimates the expected value of the maximum separation between the sample paths of $Z^{\eps,\delta}_t$ and $Z_t$, it is actually stronger than (and thus implies) convergence in distribution, as noted in Corollary \ref{C:FCLT}.

The paper is organized as follows. Section \ref{S:ProblemStatementResults} is dedicated to problem formulation and statement of our main results. After setting the stage in Section \ref{SS:Setup}, we state our main results, viz., Theorems \ref{T:LLN-II} and \ref{T:fluctuations-R-1-2}, in Section \ref{SS:MainResults}. Connections of the problem being studied with control theory are discussed in Section \ref{SS:Control}. Next, Section \ref{S:FLLN} is devoted to the proof of Theorem \ref{T:LLN-II}. The proof of Theorem \ref{T:fluctuations-R-1-2} is considerably more involved and the details are spread out over Sections \ref{S:FCLT} and \ref{S:CLT-Lemmas}.
For ease of exposition, Theorem \ref{T:fluctuations-R-1-2} is proved in Section \ref{S:FCLT} via a series of Propositions, which in turn are proved using a family of auxiliary Lemmas. The statements and proofs of the Propositions, and the statements of the Lemmas are all given in Section \ref{S:FCLT}. The proofs of the Lemmas are furnished in Section \ref{S:CLT-Lemmas}, thereby completing the proof of Theorem \ref{T:fluctuations-R-1-2}. Finally, we close out the article with a numerical illustration of our main results in Section \ref{S:NumExa}, followed by a few concluding remarks with possible future directions in Section \ref{S:conclusion}.

%
%

\subsection*{Notation and Conventions}
We end this section by listing some commonly used notation, and perhaps more importantly, outlining some notational conventions to be used in deriving various estimates. For $\pp,\qq \in \BN$, we will let $\BR^\pp$ and $\BR^{\qq\times \pp}$ denote, respectively, $\pp$-dimensional Euclidean space and the space of all $\qq\times \pp$ matrices with real entries. For $x=(x_1,\dots,x_\pp) \in \BR^\pp$, let $|x| \triangleq \sum_{i=1}^{\pp}|x_i|$ and $\|x\| \triangleq \sqrt{\sum_{i=1}^{\pp}|x_i|^2}$ denote the one- and two- norms on $\BR^\pp$; here, the symbol $\triangleq$ is read ``is defined to equal". For $M\in \BR^{\qq\times \pp}$, $|M|$ and $\|M\|$ denote the corresponding induced matrix norms (see, for instance, \cite{Hes_LST}). For a smooth function $h:\BR^\pp \rightarrow \BR^\qq,$ $Dh$ represents the Jacobian matrix, and for $h:\BR^\pp \rightarrow \BR$, $D^2h$ represents the Hessian matrix whose entries are the second order partial derivatives.

We will frequently use the fact that for any $n \in \BN$, $a_i\in \BR,i=1,...,n$, $p>0$, we have the inequality 
\begin{equation}\label{E:Triangle-Inequ}
(|a_1|+...+|a_n|)^p \le n^p(|a_1|^p+...+|a_n|^p).
\end{equation}
To simplify notation while estimating various quantities, we will adopt the following conventions. Positive constants appearing in the statement of a lemma, proposition, theorem, etc. will be denoted with an identifying numerical subscript, e.g., $C_{\ref{T:LLN-II}}$ in Theorem \ref{T:LLN-II}. In contrast, while proving said lemma, proposition, theorem, etc., we will let $C$ denote a suitable positive constant whose exact value may (and typically will) change from line to line. Since our focus is asymptotic analysis, as $\eps,\delta \searrow 0$, of the stochastic process $\{X^{\eps,\delta}_t: t \in [0,T]\}$ for fixed $T<\infty$,  
we will---with the \textit{exception} of $\eps,\delta$---absorb dependence on all other problem parameters ($f,g,\kappa,\sigma,T,n,m$ and $x_0$) into the generic constants $C$ and specific constants (e.g., $C_{\ref{T:LLN-II}}$) without explicit mention.

\section{Problem Statement and Results}\label{S:ProblemStatementResults}
\noindent

\subsection{Problem Setup}\label{SS:Setup}
Fix positive integers $n,m$ and suppose we have functions $f:\BR^n \to \BR^n$, $g:\BR^n\to \BR^{n\times m}$, $\kappa: \BR^n \to \BR^m$. We are interested in the {\sc ode}
$\dot{x}_t=f(x_t) + g(x_t)\kappa (x_{\delta\lfloor t/\delta\rfloor})$, with initial condition $x_0 \in \BR^n$, where $\dot{x}$ denotes the derivative of $x$ with respect to time $t$, and the parameter $\delta \in (0,1)$. This is simply equation \eqref{E:ode-sampled} when $c(x,y)=f(x)+g(x)\kappa(y)$, and models a situation where samples of $x$ are taken every $\delta$ units of time and the instantaneous value of $\dot{x}_t$ depends on both $x_t$ and its most recent sample $x_{\delta\lfloor t/\delta\rfloor}$. This dependence is through the possibly nonlinear functions $f,g,\kappa$ which will be assumed to satisfy certain regularity conditions in Assumptions \ref{A:Smooth-LinearGrow}, \ref{A:Boundedness} below. 

Since we will be interested in asymptotic analysis as $\delta \searrow 0$, we will emphasize the dependence of $x$ on $\delta$ using a superscript. As mentioned earlier, we will find it notationally convenient to indicate the dependence of $x$ on time $t$ by a subscript, i.e., $x^\delta_t \triangleq x^\delta(t)$.\footnote{This convention also conforms with the stochastic process notation in the sequel, where time dependence will be indicated by a subscript.} For each $\delta \in (0,1)$, we now define a time-discretization operator $\pi_\delta:[0,\infty) \to \delta \BZ^+$ by
\begin{equation*}
\pi_\delta(t) \triangleq \delta \lfloor {t}/{\delta} \rfloor \qquad \text{for $t \in [0,\infty)$,} 
\end{equation*}
which rounds down the continuous time $t \in [0,\infty)$ to the nearest multiple of $\delta$. Then, our quantity of interest is the continuous function $x^\delta_t:[0,\infty) \to \BR^n$ which solves the differential equation 
\begin{equation}\label{E:ie}
\dot{x}^{\delta}_t = f(x_t^{\delta}) + g(x_t^{\delta})\kappa (x_{\pi_\delta(t)}^{\delta}), \qquad x^\delta_0=x_0. 
\end{equation}

To now consider the situation where the equation \eqref{E:ie} is subjected to Brownian perturbations, we fix a probability space $(\Omega,\filt,\BP)$ equipped with a filtration $\{\filt_t:t \ge 0\}$ satisfying the usual conditions \cite{KS91}, which supports the $n$-dimensional Brownian motion $W=\{W_t,\filt_t: t \ge 0\}$. The expectation operator with respect to the probability measure $\BP$ will be denoted by $\BE\left[\cdot\right]$. If the Brownian perturbations are of size $0<\eps \ll 1$ with state-dependent diffusion matrix $\sigma:\BR^n \to \BR^{n\times n}$, the resulting dynamics are governed by the stochastic process $X^{\eps,\delta}_t$ which solves the {\sc sde} 
\begin{equation}\label{E:sie}
dX^{\eps,\delta}_t = \left[f(X^{\eps,\delta}_t) + g(X^{\eps,\delta}_t)\kappa (X^{\eps,\delta}_{\pi_\delta(t)})\right]\thinspace dt + \eps \sigma(X_t^{\eps,\delta}) \thinspace dW_t, \quad X^{\eps,\delta}_0=x_0. 
\end{equation}
Of course, the {\sc sde} \eqref{E:sie} is understood rigorously as the stochastic integral equation 
$X^{\eps,\delta}_t = x_0 + \int_0^t \left[f(X^{\eps,\delta}_s) + g(X^{\eps,\delta}_s)\kappa (X^{\eps,\delta}_{\pi_\delta(s)})\right]\thinspace ds + \eps \int_0^t \sigma(X_s^{\eps,\delta}) dW_s$.
The mapping $\sigma:\BR^n \to \BR^{n\times n}$ and the mappings $f,g,\kappa$ will be assumed to satisfy the following regularity conditions.


\begin{assumption}\label{A:Smooth-LinearGrow}
The functions $f:\BR^n\rightarrow \BR^n,~g:\BR^n\rightarrow \BR^{n\times m}$, $\kappa: \BR^n \rightarrow \BR^m$, $\sigma:\BR^n \to \BR^{n \times n}$ are sufficiently smooth and globally Lipschitz continuous. Consequently, these functions have linear growth, i.e., there exists a positive constant $C$ such that 
\begin{equation*}
\left(|f(x)|+|g(x)|+|\kappa(x)|+|\sigma(x)|\right) \le C(1+|x|), \quad \text{for any $x\in \BR^n$}.
\end{equation*} 
\end{assumption}

\begin{assumption}\label{A:Boundedness}
The function $g$ and the partial derivatives of $\kappa$, $g$ and $f$ up to and including the second order are bounded. \color{black}
\end{assumption}

Let's see what happens to $x^\delta_t$ and $X^{\eps,\delta}_t$ governed by equations \eqref{E:ie} and \eqref{E:sie}, respectively, in the limit as $\eps,\delta$ vanish. One expects that as $\eps,\delta \searrow 0$, $X^{\eps,\delta}_t$ and $x^\delta_t$ converge, each in a suitable sense, to $x_t$ solving
\begin{equation}\label{E:nonlin-sys}
\dot{x}_t =f(x_t)+g(x_t)\kappa(x_t) \triangleq f(x_t)+\sum_{i=1}^{m}g_i(x_t)\kappa_i(x_t) \quad \text{with initial condition $x_0$ as in \eqref{E:ie},\eqref{E:sie}.} 
\end{equation}
The rightmost expression in \eqref{E:nonlin-sys} is obtained by denoting the columns of $g(x) \in \BR^{n\times m}$ by $g_1,\dots,g_m \in \BR^n$, writing $\kappa(x)=[\kappa_1(x) \medspace \dots \medspace \kappa_m(x)]^\top$, $\kappa_i(x) \in \BR$, $1 \le i \le m$, and simply computing the matrix-vector product $g(x)\kappa(x)$. In the sequel, when referencing and using equations \eqref{E:ie},\eqref{E:sie}, and \eqref{E:nonlin-sys}, we will always be using the corresponding equivalent integral equations.


\subsection{Main Results}\label{SS:MainResults}
Here is our first main result.

\begin{theorem}(Law of Large Numbers Type Result)\label{T:LLN-II}
Let $X_t^{\varepsilon,\delta}$ and $x_t$ solve \eqref{E:sie} and \eqref{E:nonlin-sys}, respectively.
Then, for any fixed $T>0$ and $p\in \{1,2\},$ there exists a positive constant $C_{\ref{T:LLN-II}}$ such that 
for any $\eps,\delta>0$
\begin{equation*}
\BE\left[\sup_{0\le t \le T}|X_t^{\eps,\delta}-x_t|^p\right]\le 
(\eps^p + \delta^p)C_{\ref{T:LLN-II}}e^{C_{\ref{T:LLN-II}}T}.
\end{equation*}
\end{theorem}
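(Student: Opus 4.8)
The plan is to control the difference $X_t^{\eps,\delta}-x_t$ by a Gronwall argument applied to the quantity $\phi_p(\tau) \triangleq \BE\big[\sup_{0\le t\le \tau}|X_t^{\eps,\delta}-x_t|^p\big]$. I would carry out the estimate in full for $p=2$ and then recover the $p=1$ case by Cauchy--Schwarz, since $\phi_1(\tau)\le \phi_2(\tau)^{1/2}$ and $(\eps^2+\delta^2)^{1/2}\le \eps+\delta$. Subtracting the integral form of \eqref{E:nonlin-sys} from that of \eqref{E:sie} and suppressing the superscripts $\eps,\delta$, one has for $0\le t\le\tau$
\[
X_t-x_t = \int_0^t\!\big[f(X_s)-f(x_s)\big]\,ds + \int_0^t\!\big[g(X_s)\kappa(X_{\pi_\delta(s)})-g(x_s)\kappa(x_s)\big]\,ds + \eps\int_0^t\!\sigma(X_s)\,dW_s.
\]
The first integrand is bounded by $C|X_s-x_s|$ using the global Lipschitz property of $f$ (Assumption \ref{A:Smooth-LinearGrow}), and the third term is the noise contribution producing the $\eps^p$ factor; the crux is the middle drift term, where the sampling enters.

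The key step is to split the drift difference as
\[
g(X_s)\kappa(X_{\pi_\delta(s)})-g(x_s)\kappa(x_s) = g(X_s)\big[\kappa(X_{\pi_\delta(s)})-\kappa(x_{\pi_\delta(s)})\big] + g(X_s)\big[\kappa(x_{\pi_\delta(s)})-\kappa(x_s)\big] + \big[g(X_s)-g(x_s)\big]\kappa(x_s).
\]
In the first term I use that $g$ is bounded (Assumption \ref{A:Boundedness}) and $\kappa$ is Lipschitz, so it is $\le C|X_{\pi_\delta(s)}-x_{\pi_\delta(s)}|\le C\sup_{0\le r\le s}|X_r-x_r|$; in the third term $\kappa(x_s)$ is bounded because $x_s$ traverses a compact set on $[0,T]$, and $g$ is Lipschitz, giving $\le C|X_s-x_s|$. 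The second term is where the $\delta^p$ arises: since $x$ solves the limiting \textsc{ode} and hence is Lipschitz in time on $[0,T]$ (its velocity is bounded on the compact set it visits), $|x_{\pi_\delta(s)}-x_s|\le C|s-\pi_\delta(s)|\le C\delta$. The point of this particular grouping is that each increment is multiplied by a \emph{bounded} factor ($g$, or the compactly-supported $\kappa(x_s)$), which avoids any product of two linear-growth quantities and keeps the Gronwall closure linear in $\phi_p$.

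Assembling these bounds, raising to the $p$th power via \eqref{E:Triangle-Inequ}, taking expectations, and using Jensen for the time integral together with the Burkholder--Davis--Gundy inequality for the stochastic integral yields, for $p=2$,
\[
\phi_2(\tau) \le C\int_0^\tau \phi_2(s)\,ds + C\delta^2 + \eps^2 C\int_0^\tau\big(1+\BE|X_s|^2\big)\,ds,
\]
where the last term uses the linear growth of $\sigma$. This requires a preliminary uniform moment bound $\sup_{0<\eps,\delta<1}\BE\big[\sup_{0\le t\le T}|X_t^{\eps,\delta}|^2\big]\le C$, which I would establish first by the standard self-contained Gronwall estimate on \eqref{E:sie} (linear growth of $f$, $g\kappa$, $\sigma$, plus BDG). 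With that in hand the bracketed term is $\le C$, and Gronwall's inequality gives $\phi_2(\tau)\le C(\eps^2+\delta^2)e^{C\tau}$, as claimed.

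The main obstacle is precisely the handling of the sampled argument $X_{\pi_\delta(s)}$: naively estimating $g(X_s)\kappa(X_{\pi_\delta(s)})-g(x_s)\kappa(x_s)$ produces a term like $|X_s-x_s|\,|\kappa(X_{\pi_\delta(s)})|$, a product of a difference with a linear-growth factor, whose expectation (after Cauchy--Schwarz) scales like $\phi_2^{1/2}$ and breaks the linear Gronwall structure. The three-term decomposition above, leaning on the boundedness of $g$ from Assumption \ref{A:Boundedness} and on the fact that the deterministic limit $x$ stays in a compact set, is what circumvents this and cleanly separates the $O(\delta)$ sampling error from the $O(\eps)$ noise error. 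Everything else is routine; in particular, the second-order boundedness in Assumption \ref{A:Boundedness} is not needed here and is reserved for the finer fluctuation analysis.
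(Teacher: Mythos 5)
Your proof is correct, but it takes a genuinely different route from the paper's. The paper proves Theorem \ref{T:LLN-II} in two stages through the intermediate deterministic sampled trajectory $x^\delta_t$ of \eqref{E:ie}: Proposition \ref{P:LLN-Stoc-App} bounds $\BE[\sup_{0\le t\le T}|X^{\eps,\delta}_t-x^\delta_t|^p]$ by $\eps^p Ce^{CT}$ (pure noise error, since both processes share the sampled drift structure, so no sampling increment appears there), Proposition \ref{P:LLN-Det-Approx} bounds $\sup_{0\le t\le T}|x^\delta_t-x_t|^p$ by $\delta^p Ce^{CT}$ (pure sampling error), and the theorem follows by the triangle inequality. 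You instead run a single Gronwall iteration directly on $X^{\eps,\delta}_t-x_t$, with a three-term telescoping split of the drift difference in which the middle term isolates the sampling error via $|x_{\pi_\delta(s)}-x_s|\le C\delta$; that estimate is exactly the paper's Lemma \ref{L:Sampling-Difference}, your preliminary moment bound on $\BE[\sup_{0\le t\le T}|X^{\eps,\delta}_t|^2]$ is the first half of Lemma \ref{L:L2-Est}, and the bound $\sup_{0\le t\le T}|x_t|\le Ce^{CT}$ underlying your compactness claims is Lemma \ref{L:Nonlinear-Sys-Sampling-Est}, so the ingredients coincide even though the architecture does not. Your key device---grouping so that every linear-growth factor ($g$, or $\kappa$ evaluated along the deterministic trajectory) is bounded, keeping the Gronwall closure linear in $\phi_p$---is the same trick the paper deploys inside its terms $J_1$ and $J_2$, where $\kappa$ is always evaluated at $x^\delta$ or $x$ and controlled by Lemma \ref{L:Nonlinear-Sys-Sampling-Est}. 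What your route buys is economy: one Gronwall, no auxiliary process, and the $p=1$ case for free by Cauchy--Schwarz from $p=2$, whereas the paper handles $p=1$ separately (using the Burkholder--Davis--Gundy bound \eqref{E:L1-Est} for the stochastic integral in place of Doob's inequality). What the paper's route buys is a clean separation of the two error sources into two standalone estimates of independent interest, each attributing its error ($\eps^p$ vs.\ $\delta^p$) to a single mechanism. Two minor remarks: your Gronwall coefficient itself carries a factor $Ce^{CT}$ (from $|\kappa(x_s)|\le Ce^{CT}$), so the stated form $C_{\ref{T:LLN-II}}e^{C_{\ref{T:LLN-II}}T}$ is recovered only after absorbing $T$-dependence into constants, but this is explicitly licensed by the paper's notational conventions and its own proof has the identical feature; and you are right that only the boundedness of $g$ from Assumption \ref{A:Boundedness} is needed here, the second-order derivative bounds being reserved for the fluctuation analysis of Theorem \ref{T:fluctuations-R-1-2}.
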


Thus, Theorem \ref{T:LLN-II} assures us that for any fixed $T>0$, the quantity $\sup_{0 \le t \le T}|X^{\eps,\delta}_t-x_t|^p$, $p \in \{1,2\}$, converges to zero in $L^1$. Given that $|X^{\eps,\delta}_t-x_t|^p$ may be thought of as being of order $\max\{\eps^p,\delta^p\}$, it is natural to ask whether one can identify any of the higher order terms in an expansion of $X^{\eps,\delta}_t$ in powers of the small parameters. To proceed further along this route, we will assume, as described in \eqref{E:c}, that $\delta=\delta_\eps$ and $\cc \triangleq \lim_{\eps \searrow 0}\delta_\eps/\eps$ exists in $[0,\infty]$. The cases $\cc=0$, $\cc \in (0,\infty)$, and $\cc=\infty$ will correspond, respectively, to Regimes 1, 2, 3. We will now seek an expansion of $X^{\eps,\delta}_t$ in powers of the \textit{coarser} parameter. Thus, for Regimes 1 and 2, we would like to identify a stochastic process $Z_t$, independent of both $\delta$ and $\eps$, such that 
$X^{\eps,\delta}_t = x_t + \eps Z_t + o(\eps)$,
where we can explicitly estimate the error $|X^{\eps,\delta}_t - x_t -\eps Z_t|$.

To get started, we define the rescaled fluctuation process 
\begin{equation}\label{E:fluct-processes}
Z^{\eps,\delta}_t \triangleq \frac{X^{\eps,\delta}_t - x_t}{\eps} \qquad \text{for Regimes 1 and 2.}
\end{equation}
Note that $X^{\eps,\delta}_t$ is identically equal to $x_t + \eps Z^{\eps,\delta}_t$; thus, identifying $Z_t$ is tantamount to identifying the limit of $Z^{\eps,\delta}_t$ as $\eps,\delta \searrow 0$.
Recalling equations \eqref{E:sie} and \eqref{E:nonlin-sys}, we have
\begin{multline}\label{E: cent-limit-eq}
\begin{aligned}
Z_t^{\eps,\delta}&=\int_0^t \frac{f(X_s^{\eps,\delta})-f(x_s)}{\eps}\thinspace ds + \int_0^t \frac{g(X_s^{\eps,\delta})\kappa(X_{\pi_\delta(s)}^{\eps,\delta})-g(x_s)\kappa(x_s)}{\eps} \thinspace ds + \int_0^t \sigma(X_s^{\eps,\delta})dW_s\\
& = \int_0^t \frac{f(X_s^{\eps,\delta})-f(x_s)}{\eps} ds +\int_0^t \frac{g(X_s^{\eps,\delta})[\kappa(X_{\pi_\delta(s)}^{\eps,\delta})-\kappa(X_s^{\eps,\delta})]}{\eps} ds + \int_0^t \frac{[g(X_s^{\eps,\delta})-g(x_s)]\kappa(X_s^{\eps,\delta})}{\eps} ds\\
&  \qquad \qquad \qquad \qquad \qquad \qquad \qquad \qquad \qquad \quad \quad + \int_0^t \frac{g(x_s)[\kappa(X_s^{\eps,\delta})-\kappa(x_s)]}{\eps}ds + \int_0^t \sigma(X_s^{\eps,\delta})dW_s.
\end{aligned}
\end{multline}
For notational convenience, we define $\gDk : \BR^n \rightarrow \BR^{n\times n}$ by
\begin{equation*}
\gDk(x)\triangleq g(x)D\kappa(x) \qquad \text{for $x \in \BR^n$.}
\end{equation*}

If we now replace all terms, with the exception of the stochastic integral, on the right hand side of the second equation in \eqref{E: cent-limit-eq} above by their linearized counterparts, while accounting for the resulting errors separately (see the term $\mathsf{R}^{\eps,\delta}_t=\sum_{i=1}^4 \mathsf{R}^{\eps,\delta}_i(t)$ below), we get
\begin{multline}\label{E:Cent-limit-eq-Rem}
Z_t^{\eps,\delta} = \int_0^t [Df(x_s)+\gDk(x_s)]Z_s^{\eps,\delta}\thinspace ds - \int_0^t \gDk(x_s)\frac{X_s^{\varepsilon, \delta}-X_{\pi_\delta(s)}^{\varepsilon, \delta}}{\eps}\thinspace ds + \int_0^t \sum_{i=1}^{m} Dg_i(x_s)Z_s^{\eps,\delta} \kappa_i(x_s)\thinspace ds \\
 + \int_0^t \sigma(X_s^{\eps,\delta})dW_s + {\sf R}_t^{\eps,\delta}, \quad \text{where}
\end{multline}

\begin{multline}\label{E:Remainder-Terms}
{\sf R}_t^{\eps,\delta}  \triangleq {\sf R}_1^{\eps,\delta}(t)+{\sf R}_2^{\eps,\delta}(t)+{\sf R}_3^{\eps,\delta}(t)+{\sf R}_4^{\eps,\delta}(t)\triangleq \\
\\ \int_0^t \left[\frac{f(X_s^{\eps,\delta})-f(x_s)}{\eps}- Df(x_s)Z_s^{\eps,\delta} \right] ds + \int_0^t \left[\frac{g(X_s^{\eps,\delta})[\kappa(X_{\pi_\delta(s)}^{\eps,\delta})-\kappa(X_s^{\eps,\delta})]}{\eps}-\gDk(x_s)\frac{X_{\pi_\delta(s)}^{\varepsilon, \delta}-X_s^{\varepsilon, \delta}}{\eps} \right]ds\\
\qquad \qquad \qquad \qquad \qquad \qquad \qquad \qquad \quad \quad + \int_0^t \left[\frac{[g(X_s^{\eps,\delta})-g(x_s)]\kappa(X_s^{\eps,\delta})}{\eps}- \sum_{i=1}^{m} Dg_i(x_s)Z_s^{\eps,\delta} \kappa_i(x_s) \right] ds\\
 \qquad \qquad \qquad \quad \quad+\int_0^t \left[\frac{g(x_s)[\kappa(X_s^{\eps,\delta})-\kappa(x_s)]}{\eps}-\gDk(x_s)Z_s^{\eps,\delta}\right] ds.
\end{multline}

Let's now try to chart out the path to identification of the limiting process $Z_t$ and the derivation of estimates for the error $|Z^{\eps,\delta}_t-Z_t|$, and hence for $|X^{\eps,\delta}_t - x_t - \eps Z_t|=\eps|Z^{\eps,\delta}_t-Z_t|$. We start with the error terms ${\sf R}_i^{\eps,\delta}(t)$, $1 \le i \le 4$, which appear in \eqref{E:Cent-limit-eq-Rem}, and have explicit expressions as given in \eqref{E:Remainder-Terms}. Precise estimates for these terms 
will be obtained in Propositions \ref{P:f-Taylor-Approx} through \ref{P:Sampling-Term-Approx} in Section \ref{S:FCLT}, showing that the ${\sf R}_i^{\eps,\delta}(t)$, $1 \le i \le 4$, are small and can be thought of as being of order $\eps$.\footnote{Actually, these terms end up being of order $\max\{\eps,\delta\}$. Since we are working in regimes 1 and 2, $\delta$ is either smaller than, or comparable to, $\eps$ as $\eps \searrow 0$.} A bit informally, then, equation \eqref{E:Cent-limit-eq-Rem} becomes 
\begin{multline}\label{E:fluct-processes}
Z_t^{\eps,\delta}= \int_0^t [Df(x_s)+\gDk(x_s)]Z_s^{\eps,\delta}\thinspace ds - \int_0^t \gDk(x_s)\frac{X_s^{\varepsilon, \delta}-X_{\pi_\delta(s)}^{\varepsilon, \delta}}{\eps}\thinspace ds  \\ + \int_0^t \sum_{i=1}^{m} Dg_i(x_s)Z_s^{\eps,\delta} \kappa_i(x_s)\thinspace ds  + \int_0^t \sigma(X_s^{\eps,\delta}) dW_s + \mathscr{O}(\eps).
\end{multline}
Equation \eqref{E:fluct-processes} certainly suggests that if 
\begin{equation*} 
\ell(t) \triangleq \lim_{\eps,\delta \searrow 0}\int_0^t \gDk(x_s)\frac{X_s^{\varepsilon, \delta}-X_{\pi_\delta(s)}^{\varepsilon, \delta}}{\eps}\thinspace ds
\end{equation*} 
exists in a suitable sense, then the process $Z^{\eps,\delta}_t$ converges (again, in a suitable sense) to the stochastic process $Z_t$ solving 
$Z_t = \int_0^t [Df(x_s)+\gDk(x_s)]Z_s\thinspace ds - \ell(t) + \int_0^t \sum_{i=1}^{m} Dg_i(x_s)Z_s \kappa_i(x_s)\thinspace ds  + \int_0^t \sigma(x_s) dW_s$.
We now have our work clearly cut out for us: identify (i.e., explicitly compute) $\ell(t)$ and then quantify and make precise the convergence of $Z^{\eps,\delta}_t$ to $Z_t$. The former task is accomplished in Proposition \ref{P:MainTermApprox} in Section \ref{S:FCLT}, with explicit expressions for $\ell(t)$ given in \eqref{E:ell_1-and-ell_2}. The latter task also requires Proposition \ref{P:Noise-Term-Est} proved in Section \ref{S:FCLT}, which allows one to replace $\int_0^t \sigma(X^{\eps,\delta}_s) \thinspace dW_s$ by $\int_0^t \sigma(x_s) \thinspace dW_s$ in the limit. The culmination of the above effort is our main result, stated in Theorem \ref{T:fluctuations-R-1-2} below, which estimates, for any fixed $T>0$, the error $\BE\{\sup_{0 \le t \le T}|Z^{\eps,\delta}_t-Z_t|\}$. This estimate enables us to rigorously prove the convergence of $Z^{\eps,\delta}_t$ to $Z_t$ over $t \in [0,T]$ in the limit as $\eps,\delta \searrow 0$. 

To enable estimating $\delta$ in terms of $\eps$, we recall that in regimes 1 and 2, $\lim_{\eps \searrow 0}\delta_\eps/\eps = \cc \in [0,\infty)$. Consequently, there exists $\eps_0 \in (0,1)$ such that
\begin{equation}\label{E:eps0}
\left|\frac{\delta_{\eps}}{\eps}-\cc\right|<1 \quad \text{whenever} \quad 0<\eps<\eps_0. \quad \text{In particular, for $0<\eps<\eps_0$, we have $\delta_{\eps}<(\cc+1)\eps$.}
\end{equation}
We need one last piece of notation: For the cases $\cc=0$, $\cc \in (0,\infty)$, set
\begin{equation}\label{E:kappa}
\kap(\eps) \triangleq \left|\frac{\delta}{\eps}-\cc\right|,
\end{equation}
noting, of course, that $\lim_{\eps \searrow 0} \kap(\eps)=0$. Without further delay, let's state Theorem \ref{T:fluctuations-R-1-2}.


%
%
%

%

%

\begin{theorem}(Central Limit Theorem Type Result)\label{T:fluctuations-R-1-2}
Let $X_t^{\varepsilon,\delta}$ and $x_t$ solve \eqref{E:sie} and \eqref{E:nonlin-sys}, respectively. Suppose that we are in Regime $i \in \{1,2\}$, i.e., $\lim_{\eps \searrow 0}\delta_\eps/\eps = \cc \in [0,\infty)$. 
Let $Z=\{Z_t: t \ge 0\}$ be the unique strong solution of 
\begin{multline}\label{E:lim-fluct-R-1-2}
Z_t=\int_0^t [Df(x_s)+\gDk(x_s)]Z_s \thinspace ds - \frac{\cc}{2} \int_0^t \gDk(x_s)[f(x_s)+g(x_s)\kappa(x_s)] \thinspace ds + \int_0^t \sum_{i=1}^{m} Dg_i(x_s)Z_s \kappa_i(x_s) \thinspace ds \\ + \int_0^t \sigma(x_s) dW_s.
\end{multline}
Then, for any fixed $T \in (0,\infty)$, there exists a positive constant $C_{\ref{T:fluctuations-R-1-2}}$ such that for $0<\eps<\eps_0$, we have
\begin{equation}\label{E:FCLT}
\BE\left[\sup_{0 \le t \le T} |Z^{\eps,\delta}_t - Z_t|\right] = 
\frac{1}{\eps}\BE\left[\sup_{0 \le t \le T} |X^{\eps,\delta}_t - x_t - \eps Z_t|\right] \le  (\cc+1)\left[\eps+\sqrt{\delta}+\varkappa(\eps)\right]C_{\ref{T:fluctuations-R-1-2}}e^{C_{\ref{T:fluctuations-R-1-2}}T},
\end{equation}
where $\eps_0$ and $\kap(\eps) \searrow 0$ are as in equations \eqref{E:eps0} and \eqref{E:kappa}, respectively.
\end{theorem}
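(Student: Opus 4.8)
The plan is to compare the integral equation \eqref{E:Cent-limit-eq-Rem} satisfied by $Z^{\eps,\delta}_t$ with the defining equation \eqref{E:lim-fluct-R-1-2} for $Z_t$, and to control their difference by Gr\"onwall's inequality. The key observation is that the two equations carry \emph{exactly} the same linear structure in the fluctuation variable, namely the action of the $n\times n$ matrix $A(s) \triangleq Df(x_s)+\gDk(x_s)+\sum_{i=1}^m \kappa_i(x_s)Dg_i(x_s)$; by Assumption \ref{A:Boundedness} and the continuity of $x_s$ on $[0,T]$ one has $\sup_{0\le s\le T}\|A(s)\|<\infty$. (The same bound shows the coefficients of \eqref{E:lim-fluct-R-1-2} are bounded on $[0,T]$, so that this linear {\sc sde} has a unique strong solution $Z$, as asserted.) Subtracting \eqref{E:lim-fluct-R-1-2} from \eqref{E:Cent-limit-eq-Rem} gives
\[
Z^{\eps,\delta}_t - Z_t = \int_0^t A(s)\,(Z^{\eps,\delta}_s - Z_s)\,ds + E^{\eps,\delta}_t ,
\]
where the error process splits into precisely three pieces,
\[
E^{\eps,\delta}_t = \mathsf{R}^{\eps,\delta}_t + \left[\frac{\cc}{2}\int_0^t \gDk(x_s)\big(f(x_s)+g(x_s)\kappa(x_s)\big)\,ds - \int_0^t \gDk(x_s)\,\frac{X^{\eps,\delta}_s - X^{\eps,\delta}_{\pi_\delta(s)}}{\eps}\,ds\right] + \int_0^t \big[\sigma(X^{\eps,\delta}_s)-\sigma(x_s)\big]\,dW_s .
\]

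Each of the three pieces of $E^{\eps,\delta}_t$ is governed by a Proposition of Section \ref{S:FCLT}. The remainder $\mathsf{R}^{\eps,\delta}_t=\sum_{i=1}^4 \mathsf{R}^{\eps,\delta}_i(t)$ from \eqref{E:Remainder-Terms} is a sum of first-order Taylor remainders; Propositions \ref{P:f-Taylor-Approx}--\ref{P:Sampling-Term-Approx}, using Assumption \ref{A:Boundedness} and the $L^p$ control of $|X^{\eps,\delta}_s-x_s|$ from Theorem \ref{T:LLN-II}, show each is of order $\max\{\eps,\delta\}$, hence of order $(\cc+1)\eps$ after \eqref{E:eps0}. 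The bracketed sampling term is exactly the quantity identified in Proposition \ref{P:MainTermApprox}, whose limit $\ell(t)$ is computed in \eqref{E:ell_1-and-ell_2}; its error is of order $\sqrt{\delta}+\kap(\eps)$. The final stochastic-integral difference is estimated in Proposition \ref{P:Noise-Term-Est} by the Burkholder--Davis--Gundy inequality together with the Lipschitz property of $\sigma$ and Theorem \ref{T:LLN-II}, and is of order $\eps$ since $|\sigma(X^{\eps,\delta}_s)-\sigma(x_s)|$ is controlled by $|X^{\eps,\delta}_s-x_s|$. Assembling these via the elementary inequality \eqref{E:Triangle-Inequ} yields $\BE\big[\sup_{0\le t\le T}|E^{\eps,\delta}_t|\big]\le (\cc+1)\big[\eps+\sqrt{\delta}+\kap(\eps)\big]\,C$.

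To close the estimate, I would take absolute values and then the supremum over $[0,t]$ in the subtracted identity, using $\big|\int_0^t A(s)(Z^{\eps,\delta}_s-Z_s)\,ds\big|\le \int_0^t \|A(s)\|\,\sup_{0\le r\le s}|Z^{\eps,\delta}_r-Z_r|\,ds$. Crucially, the only genuinely stochastic term---the martingale $\int_0^t[\sigma(X^{\eps,\delta}_s)-\sigma(x_s)]\,dW_s$---has already been absorbed into $E^{\eps,\delta}_t$ and bounded by BDG, so what remains inside the time integral is a pathwise linear-in-$(Z^{\eps,\delta}-Z)$ quantity. Writing $\phi(t)\triangleq \BE[\sup_{0\le r\le t}|Z^{\eps,\delta}_r-Z_r|]$, this gives $\phi(t)\le \BE[\sup_{[0,T]}|E^{\eps,\delta}|]+C\int_0^t\phi(s)\,ds$, and Gr\"onwall's inequality produces the factor $e^{CT}$ and hence the claimed bound \eqref{E:FCLT}; the identity $\BE[\sup|Z^{\eps,\delta}-Z|]=\eps^{-1}\BE[\sup|X^{\eps,\delta}-x-\eps Z|]$ is immediate from $Z^{\eps,\delta}_t=\eps^{-1}(X^{\eps,\delta}_t-x_t)$.

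The main obstacle is the identification of the effective drift in Proposition \ref{P:MainTermApprox}, i.e.\ showing that $\int_0^t \gDk(x_s)\,\eps^{-1}(X^{\eps,\delta}_s - X^{\eps,\delta}_{\pi_\delta(s)})\,ds$ converges to $\tfrac{\cc}{2}\int_0^t \gDk(x_s)[f(x_s)+g(x_s)\kappa(x_s)]\,ds$ with the quantitative rate $\sqrt{\delta}+\kap(\eps)$. The idea is to expand the increment over each sampling interval as $X^{\eps,\delta}_s-X^{\eps,\delta}_{\pi_\delta(s)}=\int_{\pi_\delta(s)}^s[f+g\kappa]\,du+\eps\int_{\pi_\delta(s)}^s\sigma\,dW_u$: after dividing by $\eps$, the martingale part integrates to a term whose supremum is of order $\sqrt{\delta}$ (by BDG, since each inner integral has $L^2$-norm $\lesssim\sqrt{\delta}$), while the drift part equals $\tfrac{1}{\eps}(s-\pi_\delta(s))[f(x_s)+g(x_s)\kappa(x_s)]$ up to higher-order errors. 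Time-averaging the sawtooth $(s-\pi_\delta(s))/\delta$ over each period $[\,\pi_\delta(s),\pi_\delta(s)+\delta\,)$ produces its mean value $\tfrac12$, and replacing the prefactor $\delta/\eps$ by its limit $\cc$ incurs the error $\tfrac12\kap(\eps)$. The care needed to freeze $\gDk(x_s)[f+g\kappa]$ over each sampling cell and to control the martingale supremum is the technical heart of the argument; the remaining Propositions and the Gr\"onwall step are comparatively routine.
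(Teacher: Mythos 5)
Your proposal is correct and follows essentially the same route as the paper: subtract \eqref{E:lim-fluct-R-1-2} from \eqref{E:Cent-limit-eq-Rem}, recognize the common linear-in-fluctuation structure $Df(x_s)+\gDk(x_s)+\sum_i \kappa_i(x_s)Dg_i(x_s)$, bound the Taylor remainders via Propositions \ref{P:f-Taylor-Approx}--\ref{P:Sampling-Term-Approx}, the effective-drift error via Proposition \ref{P:MainTermApprox}, and the noise-coefficient replacement via Proposition \ref{P:Noise-Term-Est}, then close with Gr\"onwall applied to $\BE[\sup_{0\le r\le t}|Z^{\eps,\delta}_r-Z_r|]$. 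Your sketch of Proposition \ref{P:MainTermApprox} (drift/martingale splitting of the sampling increment, BDG giving the $\sqrt{\delta}$ rate, sawtooth averaging to $\tfrac12$, and the $\kap(\eps)$ cost of replacing $\delta/\eps$ by $\cc$) also mirrors the paper's decomposition into the terms ${\sf M}_i^{\eps,\delta}$ and the Riemann-sum Lemmas \ref{L:ell_1Approx} and \ref{L:ell2Approximation}.
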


\begin{remark}\label{R:Interpretation}
We comment here on the significance and implications of Theorem \ref{T:fluctuations-R-1-2}, and the insights that one can draw from it. First, this result identifies the effective drift term $\ell(t) \triangleq \frac{\cc}{2} \int_0^t \gDk(x_s)[f(x_s)+g(x_s)\kappa(x_s)] \thinspace ds$ in equation \eqref{E:lim-fluct-R-1-2}, thereby characterizing the first-order fluctuation term $Z_t$ in the expansion $x_t + \eps Z_t + \dots$ for $X^{\eps,\delta}_t$ for Regimes 1 and 2. Just as importantly, Theorem \ref{T:fluctuations-R-1-2} furnishes us with estimates of the error incurred in approximating $X^{\eps,\delta}_t$ by $x_t + \eps Z_t$. A glance at equation \eqref{E:FCLT} reveals that $\BE[\sup_{0 \le t \le T} |X^{\eps,\delta}_t - x_t - \eps Z_t|]$ goes to zero faster than $\eps$. This gives us a precise sense in which we might interpret the statement $X^{\eps,\delta}_t = x_t + \eps Z_t + o(\eps)$. Finally, we note that Theorem \ref{T:fluctuations-R-1-2} allows us to replace, in an asymptotic regime, the non-Markovian process $X^{\eps,\delta}_t$ by the Markov process $x_t + \eps Z_t$. Indeed, the dependence of the dynamics of $X^{\eps,\delta}_t$ on not just the instantaneous value $X^{\eps,\delta}_t$, but also on the most recent sample $X^{\eps,\delta}_{\pi_\delta(t)}$, gives the system ``short memory" of duration $\delta$, thereby ruling out the Markov property. In contrast, the stochastic process $Z_t$ is given by a time-inhomogeneous Ito diffusion, rendering $x_t + \eps Z_t$ a time-inhomogeneous Markov process. 
\end{remark}

\begin{remark}\label{R:Regime3}
We note that our first result, Theorem \ref{T:LLN-II}, only required that both $\eps,\delta \searrow 0$. In particular, it did \textit{not} depend on the relative rates at which these parameters vanish. In contrast, Theorem \ref{T:fluctuations-R-1-2} above has been stated specifically for Regimes 1 and 2, and as will be seen, the calculations involved critically depend on the assumption that $\cc=\lim_{\eps \searrow 0}\delta_\eps/\eps \in [0,\infty)$. One can carry out a similar analysis of fluctuations for Regime 3 ($\cc=\infty$), but now, one works with the rescaled fluctuation process $U^{\eps,\delta}_t \triangleq \delta^{-1}(X^{\eps,\delta}_t-x_t)$. Here, the coarser parameter used to rescale the fluctuations $X^{\eps,\delta}_t-x_t$ is now $\delta$, rather than $\eps$. The calculations for Regime 3 are very similar. It is worth noting that since the $\eps/\delta$ term multiplying the Brownian noise (in the equation for $U^{\eps,\delta}_t$) vanishes as $\eps \searrow 0$, the limiting fluctuation process in Regime 3 is \textit{deterministic}.
\end{remark}

\begin{remark}\label{R:linear}
It is easily checked that the functions $f(x)=Ax$, $g(x)=B$, $\kappa(x)=-Kx$, $\sigma(x)=I_n$ where $A \in \BR^{n\times n}$, $B \in \BR^{n \times m}$, $K \in \BR^{m \times n}$ are constant matrices and $I_n$ denotes the $n \times n$ identity matrix,\footnote{Thus, in the notation of \eqref{E:ode-sampled}, $c(x,y)=Ax-BKy$.} satisfy Assumptions \ref{A:Smooth-LinearGrow} and \ref{A:Boundedness} and hence, Theorems \ref{T:LLN-II} and \ref{T:fluctuations-R-1-2} do apply. This case was studied in \cite{dhama2020approximation}, where the explicit expressions for $X^{\eps,\delta}_t$---possible due to linearity---were of critical importance. Returning to the present paper, we note that for the case of general $f,g,\kappa,\sigma$, it is impossible to obtain explicit formulas for $X^{\eps,\delta}_t$. Thus, part of the innovation in the present work is in the techniques used, which yield a result similar to the one obtained in \cite{dhama2020approximation}, but under \textit{significantly} weaker hypotheses.
\end{remark}


As was briefly noted in Section \ref{S:Introduction} earlier, Theorem \ref{T:fluctuations-R-1-2} is stronger than a result of {\sc clt} type since the former directly compares sample paths of $Z^{\eps,\delta}_t$ and $Z_t$, while the latter would involve a weaker notion of convergence, viz., convergence in distribution. In more detail, a family $\{Y^\eps:\eps \in (0,1)\}$ of random variables taking values in a complete, separable metric space $S$ is said to  \textit{converge in distribution} to the $S$-valued random variable $Y$  as $\eps \searrow 0$, denoted $Y^\eps \Rightarrow Y$, if for every bounded continuous function $f:S \to \BR$, we have $\lim_{\eps \searrow 0} \BE[f(Y^\eps)]=\BE[f(Y)]$ \cite{ConvProbMeas,EK86}. Since going from stronger (pathwise) approximation results to weaker (in distribution) approximation results is well-understood, we state the following corollary without proof. 

\begin{corollary}\label{C:FCLT}
Let $X_t^{\varepsilon,\delta}$ and $x_t$ solve \eqref{E:sie} and \eqref{E:nonlin-sys}, respectively. Suppose that we are in Regime $i \in \{1,2\}$, i.e., $\lim_{\eps \searrow 0}\delta_\eps/\eps = \cc \in [0,\infty)$. Let $Z=\{Z_t: t \ge 0\}$ be the unique strong solution of \eqref{E:lim-fluct-R-1-2}. Then, for any $T>0$, we have $Z^{\eps,\delta} \Rightarrow Z$ on $C([0,T];\BR^n)$ as $\eps \searrow 0$, where the space $C([0,T];\BR^n)$ of continuous functions mapping $[0,\infty)$ to $\BR^n$ is equipped with the metric induced by the sup norm.
\end{corollary}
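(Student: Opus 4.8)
The plan is to derive the corollary as a soft consequence of Theorem \ref{T:fluctuations-R-1-2}, exploiting the fact that the theorem is a \emph{strong} (pathwise) estimate rather than a statement merely about laws. Observe first that $C([0,T];\BR^n)$ equipped with the sup-norm metric $d(\xi,\eta) \triangleq \sup_{0 \le t \le T}|\xi_t-\eta_t|$ is a complete, separable metric space, so the notion of convergence in distribution recalled just above the corollary applies. Crucially, both $Z^{\eps,\delta}$ and $Z$ are realized on the \emph{same} probability space $(\Omega,\filt,\BP)$ and are driven by the \emph{same} Brownian motion $W$; hence $d(Z^{\eps,\delta},Z)=\sup_{0 \le t \le T}|Z^{\eps,\delta}_t-Z_t|$ is a genuine real-valued random variable, and it is exactly the quantity estimated on the left-hand side of \eqref{E:FCLT}.

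First I would convert the $L^1$ bound of Theorem \ref{T:fluctuations-R-1-2} into convergence in probability. Since $\cc \in [0,\infty)$ and $\kap(\eps) \searrow 0$, and since $\delta=\delta_\eps<(\cc+1)\eps \searrow 0$ by \eqref{E:eps0}, the right-hand side of \eqref{E:FCLT} tends to $0$ as $\eps \searrow 0$; thus $\BE[d(Z^{\eps,\delta},Z)] \to 0$. By Markov's inequality, for every $\eta>0$ we have $\BP(d(Z^{\eps,\delta},Z)>\eta) \le \eta^{-1}\BE[d(Z^{\eps,\delta},Z)] \to 0$, i.e.\ $Z^{\eps,\delta} \to Z$ in probability in the metric space $(C([0,T];\BR^n),d)$.

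It then remains to invoke the standard principle that convergence in probability implies convergence in distribution on a metric space. I would argue this by the subsequence criterion: fix any bounded continuous $f:C([0,T];\BR^n)\to\BR$ and any sequence $\eps_k \searrow 0$. Convergence in probability along $\eps_k$ yields a further subsequence $\eps_{k_j}$ along which $d(Z^{\eps_{k_j},\delta},Z) \to 0$ almost surely; continuity of $f$ then gives $f(Z^{\eps_{k_j},\delta}) \to f(Z)$ a.s., and boundedness of $f$ together with the dominated convergence theorem yields $\BE[f(Z^{\eps_{k_j},\delta})] \to \BE[f(Z)]$. Since every subsequence of $(\BE[f(Z^{\eps_k,\delta})])_k$ admits a further subsequence converging to the common limit $\BE[f(Z)]$, the full sequence converges, establishing $\lim_{\eps \searrow 0}\BE[f(Z^{\eps,\delta})]=\BE[f(Z)]$ and hence $Z^{\eps,\delta} \Rightarrow Z$.

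I do not anticipate any genuine obstacle: all of the analytic difficulty has already been absorbed into Theorem \ref{T:fluctuations-R-1-2}, and the corollary is essentially the observation that $L^1$ convergence of the sup-norm distance dominates weak convergence. The only point requiring care---and it is automatic here---is that the strong estimate presupposes that $Z^{\eps,\delta}$ and $Z$ be coupled on one probability space through the common driving noise $W$; were the two processes defined on unrelated spaces, the passage from \eqref{E:FCLT} to convergence in distribution would not be available. This is precisely why the authors can state the corollary without proof, citing \cite{EK86,ConvProbMeas}.
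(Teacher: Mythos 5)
Your proposal is correct and is exactly the standard argument the paper implicitly invokes when it states the corollary without proof: Theorem \ref{T:fluctuations-R-1-2} couples $Z^{\eps,\delta}$ and $Z$ on one probability space via the common Brownian motion, the vanishing right-hand side of \eqref{E:FCLT} plus Markov's inequality gives convergence in probability in the sup metric, and the routine subsequence/dominated-convergence step upgrades this to convergence in distribution. No gaps; this matches the paper's (omitted) reasoning.
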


\subsection{Applications to control}\label{SS:Control}
It was alluded to in Section \ref{S:Introduction} that the mathematical problems studied in this paper arise naturally in control theory. We now describe this connection in more detail; in the process, we will see how taking $c(x,y)$ of the particular form $f(x)+g(x)\kappa(y)$ enables us to derive results applicable to a fairly large class of nonlinear control systems \cite{vidyasagar2002nonlinear,khalil2002nonlinear,Zabczyk-MCT}. Just as importantly, we will see how the results of this paper can be interpreted in the control context. 

Suppose we have a physical system whose state $x(t)$---in the absence of any external control inputs---evolves in $\BR^n$ according to the {\sc ode} $\dot{x}=f(x)$ with smooth drift vector field $f:\BR^n \to \BR^n$. Assume further that we have $m$ control inputs $u_1,\dots,u_m \in \BR$ at our disposal, in the presence of which the evolution of the system is governed by the {\sc ode}
\begin{equation*}
\dot{x}=f(x) + \sum_{i=1}^m g_i(x)u_i,
\end{equation*}
where the control vector fields $g_i:\BR^n \to \BR^n$, $1 \le i \le m$, are sufficiently regular. For the nonlinear system affine in the control that we have described (taken from \cite{maheshwari2021stabilization}), the choice of control inputs $u_1,\dots,u_m$ is naturally dictated by the desired system behavior (e.g., asymptotic stabilization, minimization of a cost functional, etc). For our analysis, we will assume that a suitable \textit{feedback control law} of the form $u=\kappa(x)$ has \textit{already} been identified. Here, $u=(u_1,\dots,u_m) \in \BR^m$ and the vector-valued function $\kappa:\BR^n \to \BR^m$ has components $\kappa_1,\dots,\kappa_m$, i.e., the $i$-th control input $u_i$  is given by $u_i=\kappa_i(x)$. Thus, the \textit{idealized} behavior of the system is now governed by the {\sc ode}
$\dot{x}=f(x) + \sum_{i=1}^m g_i(x)\kappa_i(x)$,
which, of course, can be expressed as the integral equation \eqref{E:nonlin-sys}. 

The foregoing is idealized for multiple reasons, most notably (for our purposes) since it does not incorporate the effects of sampling and random perturbations.\footnote{It deserves mention that from the control standpoint, our calculations still involve many idealizations. For example, we have completely ignored issues of quantization of signals, issues of robustness, etc.} To understand the first of these, one notes that in most modern control systems, the control is effected by a digital computer, whose actions can be applied only at discrete time instants. This being the case, a \textit{sample-and-hold} implementation is frequently used: here, the state of the system is \textit{sampled} (measured) at closely spaced discrete time instants, the corresponding control action is computed according to the prescription $u=\kappa(x)$, and the control is \textit{held} fixed until the next sample is taken. For the case of periodic sampling, where samples are spaced $\delta$ units of time apart, a bit of reflection shows that the state of the system $x^\delta_t$ is governed by the differential equation
$\dot{x}_t^\delta=f(x^\delta_t) + g(x^\delta_t)\kappa(x^\delta_{\pi_\delta(t)})$,
which is precisely \eqref{E:ie}. Of course, we have taken $g$ to be the matrix with columns $g_1,\dots,g_m$. If we would now like to account for the effect of small state-dependent Brownian noise of size $\eps \in (0,1)$, we get the {\sc sde}  \eqref{E:sie}. 

Having observed that our setup covers a reasonably large class of control systems, let's try to interpret the results in the control context. It is worth emphasizing that we do \textit{not} concern ourselves with how to pick a suitable control law, but rather with issues of \textit{performance degradation} that inevitably arise due to sampling effects and external random perturbations. Our first result, Theorem \ref{T:LLN-II} confirms what one would intuitively expect: over finite time intervals, as the frequency of sampling increases and the strength of the noise vanishes, one recovers the idealized behavior given by $x_t$. For the cases when the time $\delta$ between samples decreases faster than, or at the same rate as, the strength $\eps$ of the noise, our second result, Theorem \ref{T:fluctuations-R-1-2}, refines this zeroth-order term $x_t$ by giving a first-order correction term given by the stochastic process $Z_t$. Theorem \ref{T:fluctuations-R-1-2} also quantifies (in a suitable sense) the error incurred in approximating the process $X^{\eps,\delta}_t$ which involves hybrid dynamics by the \textit{non-hybrid} proxy $x_t + \eps Z_t$.

\section{Limiting Mean Behavior}\label{S:FLLN}
This section is devoted to the proof of Theorem \ref{T:LLN-II}, which, as noted earlier, can be interpreted as a result of the form of the {\sc lln}. The primary ingredients in the proof are Propositions \ref{P:LLN-Stoc-App} and \ref{P:LLN-Det-Approx} which provide, respectively, pathwise estimates for the quantities $\sup_{0\le t \le T}|X_t^{\eps,\delta}-x_t^\delta|^p$ and $\sup_{0\le t \le T}|x_t^\delta-x_t|^p$, $p \in \{1,2\}$. In Section \ref{S:LLN-Theorem}, we state and prove these propositions, relying on a series of auxiliary lemmas, and conclude with the proof of Theorem \ref{T:LLN-II}. For ease of reading, the proofs of the auxiliary lemmas are deferred to Section \ref{S:LLN-Lemma}.



\subsection{Proof of Theorem \ref{T:LLN-II}}\label{S:LLN-Theorem}

\begin{proposition}\label{P:LLN-Stoc-App}
Let $x_t^\delta$ and $X_t^{\eps,\delta}$ be the solutions of \eqref{E:ie} and \eqref{E:sie}, respectively. Then, for $p\in \{1,2\}$ and any $T>0$, there exists a positive constant $C_{\ref{P:LLN-Stoc-App}}$ 
such that for any $\eps, \delta>0$, we have
\begin{equation*}
\BE\left[ \sup_{0\le t \le T}|X_t^{\eps,\delta}-x_t^\delta|^p \right] \le \eps^p C_{\ref{P:LLN-Stoc-App}} e^{C_{\ref{P:LLN-Stoc-App}}T}.
\end{equation*}
\end{proposition}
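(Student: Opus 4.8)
The plan is to control the difference $\Delta^{\eps,\delta}_t \triangleq X^{\eps,\delta}_t - x^\delta_t$ by a Gronwall argument applied to $\phi(t)\triangleq\BE[\sup_{0\le r\le t}|\Delta^{\eps,\delta}_r|^p]$, isolating the contribution of the noise (which carries the factor $\eps$) from that of the drift. Subtracting the integral form of \eqref{E:ie} from that of \eqref{E:sie} gives
\[
\Delta^{\eps,\delta}_t = \int_0^t \big[f(X^{\eps,\delta}_s)-f(x^\delta_s)\big]\,ds + \int_0^t \big[g(X^{\eps,\delta}_s)\kappa(X^{\eps,\delta}_{\pi_\delta(s)})-g(x^\delta_s)\kappa(x^\delta_{\pi_\delta(s)})\big]\,ds + \eps\int_0^t \sigma(X^{\eps,\delta}_s)\,dW_s.
\]
First I would record two a priori bounds, to be stated as auxiliary lemmas: a \emph{deterministic} bound $\sup_{0\le t\le T}|x^\delta_t|\le C$ uniform in $\delta$, obtained by feeding the linear growth of $f,\kappa$ and the boundedness of $g$ into a Gronwall estimate on \eqref{E:ie}; and the standard \emph{stochastic} moment bound $\BE[\sup_{0\le t\le T}|X^{\eps,\delta}_t|^2]\le C$ uniform in $\eps,\delta\in(0,1)$, obtained from \eqref{E:sie} via the Burkholder--Davis--Gundy (BDG) inequality and Gronwall.

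The decisive step is the treatment of the product $g\kappa$, which I expect to be the main obstacle, since a product of two linear-growth functions is not jointly globally Lipschitz. I would split
\[
g(X^{\eps,\delta}_s)\kappa(X^{\eps,\delta}_{\pi_\delta(s)})-g(x^\delta_s)\kappa(x^\delta_{\pi_\delta(s)}) = g(X^{\eps,\delta}_s)\big[\kappa(X^{\eps,\delta}_{\pi_\delta(s)})-\kappa(x^\delta_{\pi_\delta(s)})\big] + \big[g(X^{\eps,\delta}_s)-g(x^\delta_s)\big]\kappa(x^\delta_{\pi_\delta(s)}),
\]
so that the boundedness of $g$ (Assumption \ref{A:Boundedness}) controls the first term by $C|\Delta^{\eps,\delta}_{\pi_\delta(s)}|$, while in the second term the linear-growth factor $\kappa(x^\delta_{\pi_\delta(s)})$ involves only the \emph{deterministic} trajectory and is therefore bounded by the constant furnished by the first a priori lemma, leaving $C|\Delta^{\eps,\delta}_s|$. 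Together with the Lipschitz bound on $f$ and the inequality $|\Delta^{\eps,\delta}_{\pi_\delta(s)}|\le \sup_{0\le r\le s}|\Delta^{\eps,\delta}_r|$ (valid since $\pi_\delta(s)\le s$), the entire drift difference is then dominated pathwise by $C\sup_{0\le r\le s}|\Delta^{\eps,\delta}_r|$.

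Finally I would assemble the estimate. Applying $\sup_{0\le r\le t}$ followed by $\BE[\cdot]$, and using \eqref{E:Triangle-Inequ} to separate the drift and noise contributions (together with Cauchy--Schwarz in the time variable when $p=2$), the drift term contributes $C\int_0^t \phi(s)\,ds$. For the martingale term $\eps\int_0^t \sigma(X^{\eps,\delta}_s)\,dW_s$, the BDG inequality combined with the linear growth of $\sigma$ and the second a priori lemma yields $\BE\big[\sup_{0\le r\le t}\big|\eps\int_0^r \sigma(X^{\eps,\delta}_u)\,dW_u\big|^p\big]\le \eps^p C$. This gives the integral inequality $\phi(t)\le \eps^p C + C\int_0^t \phi(s)\,ds$, and Gronwall's inequality delivers $\phi(T)\le \eps^p C e^{CT}$, which is precisely the claimed bound with $C_{\ref{P:LLN-Stoc-App}}=C$. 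The crux, as indicated, is the $g\kappa$ coupling; the key idea is the decomposition that shifts the troublesome linear-growth factor onto the a priori bounded deterministic path, so that the boundedness of $g$ in Assumption \ref{A:Boundedness} can be exploited to close the Gronwall loop cleanly.
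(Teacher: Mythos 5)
Your proposal is correct and follows essentially the same route as the paper: the identical add-and-subtract decomposition of the $g\kappa$ term (shifting the linear-growth factor $\kappa(x^\delta_{\pi_\delta(s)})$ onto the deterministic path, bounded uniformly in $\delta$ by the paper's Lemma \ref{L:Nonlinear-Sys-Sampling-Est}), the same a priori moment bounds (the paper's Lemma \ref{L:L2-Est}), and the same Gronwall closure. The only cosmetic difference is that the paper invokes Doob's maximal inequality for $p=2$ and a BDG-plus-Jensen estimate for $p=1$, whereas you use BDG throughout --- an equivalent handling of the martingale term.
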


\begin{proposition}\label{P:LLN-Det-Approx}
Let $x_t^\delta$ and $x_t$ be the solutions of \eqref{E:ie} and \eqref{E:nonlin-sys}, respectively. Then, for any $T>0,$ and $p \in \{1,2\},$ there exists a positive constant $C_{\ref{P:LLN-Det-Approx}}$ 
such that for any $\eps,\delta>0$, we have 
\begin{equation*}
\sup_{0\le t \le T} |x_t^\delta-x_t|^p \le \delta^p C_{\ref{P:LLN-Det-Approx}}e^{C_{\ref{P:LLN-Det-Approx}}T}.
\end{equation*}
\end{proposition}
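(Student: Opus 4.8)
The plan is to compare the integral forms of \eqref{E:ie} and \eqref{E:nonlin-sys}, isolate the error genuinely due to sampling, and close with Gronwall's inequality; since both $x^\delta_t$ and $x_t$ are deterministic, no stochastic estimates are needed here. As a preliminary step I would establish a bound $\sup_{0\le t\le T}(|x^\delta_t|+|x_t|)\le C$ that is \emph{uniform in $\delta$}. Writing $m(t)\triangleq \sup_{0\le s\le t}|x^\delta_s|$ and using that $g$ is bounded (Assumption \ref{A:Boundedness}) together with the linear growth of $f$ and $\kappa$ (Assumption \ref{A:Smooth-LinearGrow}) and the fact that $|x^\delta_{\pi_\delta(s)}|\le m(s)$ since $\pi_\delta(s)\le s$, one obtains $m(t)\le |x_0|+C\int_0^t(1+m(s))\,ds$, and Gronwall gives $m(T)\le Ce^{CT}$; the analogous (in fact simpler) argument bounds $x_t$.

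The crucial step, and the one where the factor $\delta$ enters, is the one-step displacement estimate $|x^\delta_s-x^\delta_{\pi_\delta(s)}|\le C\delta$, valid for all $s\in[0,T]$. Indeed, $|s-\pi_\delta(s)|\le\delta$, and on the interval $[\pi_\delta(s),s]$ the right-hand side of \eqref{E:ie} is bounded by $|f(x^\delta_u)|+|g(x^\delta_u)|\,|\kappa(x^\delta_{\pi_\delta(u)})|\le C(1+m(T))\le C$ by the a priori bound, so integrating the differential equation over $[\pi_\delta(s),s]$ yields the claim.

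With these two ingredients in place, I would subtract the integral forms of \eqref{E:ie} and \eqref{E:nonlin-sys} and, with $e(t)\triangleq|x^\delta_t-x_t|$, decompose the integrand of the drift difference as
\begin{equation*}
[f(x^\delta_s)-f(x_s)]+g(x^\delta_s)[\kappa(x^\delta_{\pi_\delta(s)})-\kappa(x^\delta_s)]+[g(x^\delta_s)\kappa(x^\delta_s)-g(x_s)\kappa(x_s)].
\end{equation*}
The first and third brackets are each bounded by $Ce(s)$: the first by the global Lipschitz property of $f$, and the third by Lipschitz continuity of the map $x\mapsto g(x)\kappa(x)$ on the bounded set containing both trajectories (which follows from $g$ bounded and $Dg,D\kappa$ bounded by Assumption \ref{A:Boundedness}, combined with the a priori bound). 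The middle bracket is bounded, using $g$ bounded and $\kappa$ Lipschitz together with the one-step estimate, by $C|x^\delta_{\pi_\delta(s)}-x^\delta_s|\le C\delta$. Integrating gives $e(t)\le C\delta+C\int_0^t e(s)\,ds$, whence Gronwall yields $\sup_{0\le t\le T}e(t)\le C\delta\,e^{CT}$, which is the claim for $p=1$; the case $p=2$ then follows by squaring and absorbing constants.

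I do not expect a genuine obstacle, as the statement is purely deterministic and reduces to standard Gronwall bookkeeping. The one point that must be handled in the correct order is securing the $\delta$-uniform a priori bound \emph{before} estimating the one-step displacement, since the sampling term $g(x^\delta_s)[\kappa(x^\delta_{\pi_\delta(s)})-\kappa(x^\delta_s)]$ is precisely where the order-$\delta$ error is generated and the whole estimate rests on controlling it.
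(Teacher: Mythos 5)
Your proof is correct, and it follows the same Gronwall skeleton as the paper's, but with a genuinely transposed decomposition of the sampling term. The paper writes the drift difference as $g(x_s^\delta)\{\kappa(x^\delta_{\pi_\delta(s)})-\kappa(x_{\pi_\delta(s)})\}+\{g(x^\delta_s)-g(x_s)\}\kappa(x_{\pi_\delta(s)})+g(x_s)[\kappa(x_{\pi_\delta(s)})-\kappa(x_s)]$, so the order-$\delta$ error is generated by the one-step displacement of the \emph{limit} trajectory, $|x_s-x_{\pi_\delta(s)}|\le \delta Ce^{CT}$, which is exactly its Lemma \ref{L:Sampling-Difference}; the comparison of the two trajectories at the sampled time is then absorbed into the Gronwall integrand via $|x^\delta_{\pi_\delta(s)}-x_{\pi_\delta(s)}|\le \sup_{0\le r\le s}|x^\delta_r-x_r|$. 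You instead first replace the sample by the current value along the \emph{sampled} trajectory, $g(x^\delta_s)[\kappa(x^\delta_{\pi_\delta(s)})-\kappa(x^\delta_s)]$, which forces you to prove a one-step displacement bound for $x^\delta$ itself; this bound is not stated in the paper but, as you observe, follows from the $\delta$-uniform a priori estimate (the paper's Lemma \ref{L:Nonlinear-Sys-Sampling-Est}) by integrating \eqref{E:ie} over $[\pi_\delta(s),s]$, so your ordering of steps (a priori bound first, then displacement) is exactly right. Your remaining bracket $g(x^\delta_s)\kappa(x^\delta_s)-g(x_s)\kappa(x_s)$ is handled by the split $g(x^\delta_s)[\kappa(x^\delta_s)-\kappa(x_s)]+[g(x^\delta_s)-g(x_s)]\kappa(x_s)$, using boundedness of $g$, Lipschitz continuity of $g,\kappa$, and linear growth of $\kappa$ together with the a priori bound on $x_s$ --- the same ingredients the paper uses for its middle two terms. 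Two small differences are worth noting: your treatment of $p=2$ by simply squaring the $p=1$ bound is legitimate here precisely because the estimate is deterministic and pathwise, and is arguably cleaner than the paper's parallel run of the argument with H\"older's inequality; on the other hand, your Gronwall constant inherits dependence on the a priori bound $Ce^{CT}$ (so does the paper's, via the factor $1+|x_{\pi_\delta(s)}|^p$), which is consistent with the paper's stated convention of absorbing all parameter dependence, including $T$, into generic constants.
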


To prove Propositions \ref{P:LLN-Stoc-App}, \ref{P:LLN-Det-Approx} (and also later in the sequel), we require estimates on the terms $\sup_{0\le t \le T}|x_t^\delta|^p,\thinspace \sup_{0\le t \le T}|x_t|^p, \thinspace |x_t-x_{\pi_{\delta}(t)}|^p$, and $\BE \left\{\left|\int_0^T \sigma(X_s^{\eps,\delta})dW_s \right|^p\right\}$ for $p \in \{1,2\}$.
The bounds for the quantities $\sup_{0\le t \le T}|x_t^\delta|^p,\thinspace \sup_{0\le t \le T}|x_t|^p$, and $|x_t-x_{\pi_{\delta}(t)}|^p$ are obtained in Lemmas \ref{L:Nonlinear-Sys-Sampling-Est} and \ref{L:Sampling-Difference} below, while the term $\BE \left\{\left|\int_0^T \sigma(X_s^{\eps,\delta})dW_s \right|^p\right\}$ is estimated in Lemma \ref{L:L2-Est}. The proofs of Lemmas \ref{L:Nonlinear-Sys-Sampling-Est}, \ref{L:Sampling-Difference} and \ref{L:L2-Est} are provided in Section \ref{S:LLN-Lemma}.

\begin{lemma}\label{L:Nonlinear-Sys-Sampling-Est}
Let $x_t^\delta$ and $x_t$ be the solutions of the equations \eqref{E:ie} and \eqref{E:nonlin-sys}, respectively. Then, for $p \in \{1,2\}$ and $T > 0$,  there exists a positive constant $C_{\ref{L:Nonlinear-Sys-Sampling-Est}}$ 
such that 
\begin{equation*}
\begin{aligned}
\sup_{0\le t \le T}|x_t^\delta|^p  \le C_{\ref{L:Nonlinear-Sys-Sampling-Est}}e^{C_{\ref{L:Nonlinear-Sys-Sampling-Est}}T} \qquad \text{and} \qquad
\sup_{0\le t \le T}|x_t|^p   \le C_{\ref{L:Nonlinear-Sys-Sampling-Est}}e^{C_{\ref{L:Nonlinear-Sys-Sampling-Est}}T}.
\end{aligned}
\end{equation*}
\end{lemma}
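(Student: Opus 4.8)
The plan is to obtain both bounds by a Grönwall argument applied to the integral forms of \eqref{E:ie} and \eqref{E:nonlin-sys}, the only nontrivial point being to verify that the right-hand vector field grows at most linearly. Global existence of $x_t$ and $x_t^\delta$ on $[0,T]$ is guaranteed by the global Lipschitz hypothesis in Assumption \ref{A:Smooth-LinearGrow}, so I may treat the solutions as given and focus on the a priori estimate. The crucial observation is that a naive application of the linear growth bound of Assumption \ref{A:Smooth-LinearGrow} to the product $g(x)\kappa(x)$ would only yield \emph{quadratic} growth, which is fatal for Grönwall. This is precisely where Assumption \ref{A:Boundedness} enters: it guarantees that $g$ is bounded, say $|g(x)| \le C$, whence $|g(x)\kappa(x)| \le |g(x)|\thinspace|\kappa(x)| \le C(1+|x|)$. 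Thus the full drift $f + g\kappa$ in \eqref{E:nonlin-sys} has linear growth, and the same reasoning applies to the sampled drift $f(x^\delta) + g(x^\delta)\kappa(x^\delta_{\pi_\delta})$ in \eqref{E:ie}.

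For the estimate on $x_t$, I would start from $|x_t| \le |x_0| + \int_0^t \bigl(|f(x_s)| + |g(x_s)|\thinspace|\kappa(x_s)|\bigr)\thinspace ds \le |x_0| + \int_0^t C(1+|x_s|)\thinspace ds$, take the supremum over $[0,t]$ (legitimate since the integrand is dominated by $C(1+\sup_{0\le r\le s}|x_r|)$ and integration is monotone), and apply Grönwall's inequality to conclude $\sup_{0\le t\le T}|x_t| \le C e^{CT}$ for $p=1$. For $p=2$ I would square the integral inequality using $(a+b)^2 \le 2a^2+2b^2$ together with Cauchy--Schwarz and \eqref{E:Triangle-Inequ} to reach $|x_t|^2 \le C + C\int_0^t |x_s|^2\thinspace ds$, then take the supremum and apply Grönwall once more. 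Throughout, the dependence on $|x_0|, T, n, m$ and the remaining problem data is absorbed into the generic constant $C$ in accordance with the stated conventions.

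The only structural difference for $x_t^\delta$ is the sampled argument $x^\delta_{\pi_\delta(s)}$. Since $\pi_\delta(s) = \delta\lfloor s/\delta\rfloor \le s$, I would bound $|x^\delta_{\pi_\delta(s)}| \le \sup_{0\le r\le s}|x^\delta_r|$, so that after invoking boundedness of $g$ and linear growth of $\kappa$ the integrand is controlled by $C\bigl(1 + \sup_{0\le r\le s}|x^\delta_r|\bigr)$; the running supremum appearing on the right is exactly what is needed to close the Grönwall loop. With this observation the argument proceeds verbatim as in the non-sampled case, for both $p=1$ and $p=2$.

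I expect no genuine obstacle: this is a standard a priori energy estimate. The single point requiring care---and the reason Assumption \ref{A:Boundedness} is invoked at all---is securing linear rather than quadratic growth of the product $g\kappa$; overlooking the boundedness of $g$ would break the Grönwall closure. Handling the floor function through the monotonicity $\pi_\delta(s) \le s$ is the only sampling-specific wrinkle, and it is entirely benign.
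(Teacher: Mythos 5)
Your proposal is correct and takes essentially the same route as the paper's own proof: both pass to the integral representation, apply \eqref{E:Triangle-Inequ} together with H\"older's inequality for $p=2$ (and $|\int\cdot|\le\int|\cdot|$ for $p=1$), use the boundedness of $g$ from Assumption \ref{A:Boundedness} combined with the linear growth of $\kappa$ to keep the drift linearly growing, dominate the sampled term via $|x^\delta_{\pi_\delta(s)}|\le\sup_{0\le r\le s}|x^\delta_r|$, and close with Gr\"onwall's inequality. Your explicit observation that without boundedness of $g$ the product $g\kappa$ would only admit a quadratic bound is exactly the (tacit) reason Assumption \ref{A:Boundedness} is invoked in the paper's argument.
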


\begin{lemma}\label{L:Sampling-Difference}
Let $x_t$ be the solution of \eqref{E:nonlin-sys}. Then, for $p \in \{1,2\}$ and $T > 0,$ there exists a positive constant $C_{\ref{L:Sampling-Difference}}$ 
such that 
$|x_t-x_{\pi_{\delta}(t)}|^p \le \delta^p C_{\ref{L:Sampling-Difference}}e^{C_{\ref{L:Sampling-Difference}}T}, \thinspace t\in[0,T].$
\end{lemma}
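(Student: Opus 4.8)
The plan is to exploit the fact that, unlike $X^{\eps,\delta}_t$, the limiting process $x_t$ solves the genuine {\sc ode} \eqref{E:nonlin-sys} and is therefore absolutely continuous with a vector field that is bounded along the trajectory on $[0,T]$. The quantity $x_t - x_{\pi_\delta(t)}$ is simply the increment of $x$ over a time window of length $t - \pi_\delta(t)$, and since $\pi_\delta(t) = \delta\lfloor t/\delta\rfloor$ rounds $t$ down to the nearest multiple of $\delta$, this window has length strictly less than $\delta$. So the whole statement reduces to bounding the increment of a Lipschitz-in-time function over an interval of length at most $\delta$.

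Concretely, first I would use the integral-equation form of \eqref{E:nonlin-sys} to write, for $t \in [0,T]$,
\begin{equation*}
x_t - x_{\pi_\delta(t)} = \int_{\pi_\delta(t)}^t \left[ f(x_s) + g(x_s)\kappa(x_s) \right] ds,
\end{equation*}
and take norms to obtain $|x_t - x_{\pi_\delta(t)}| \le \int_{\pi_\delta(t)}^t |f(x_s) + g(x_s)\kappa(x_s)|\, ds$. Next I would bound the integrand uniformly in $s \in [0,T]$. By the linear growth in Assumption \ref{A:Smooth-LinearGrow} we have $|f(x_s)| \le C(1+|x_s|)$ and $|\kappa(x_s)| \le C(1+|x_s|)$, while Assumption \ref{A:Boundedness} gives $|g(x_s)| \le C$; together these yield $|f(x_s) + g(x_s)\kappa(x_s)| \le C(1+|x_s|)$. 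Invoking the a priori bound of Lemma \ref{L:Nonlinear-Sys-Sampling-Est} on $\sup_{0 \le s \le T}|x_s|$ then converts this into a deterministic bound of the form $|f(x_s) + g(x_s)\kappa(x_s)| \le C e^{CT}$ for all $s \in [0,T]$.

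Finally, since the length of the interval of integration satisfies $t - \pi_\delta(t) < \delta$, combining the two previous displays gives
\begin{equation*}
|x_t - x_{\pi_\delta(t)}| \le C e^{CT}\,(t - \pi_\delta(t)) \le \delta\, C e^{CT},
\end{equation*}
and raising to the power $p \in \{1,2\}$ and absorbing constants (using \eqref{E:Triangle-Inequ} if convenient) produces the claimed estimate $|x_t - x_{\pi_\delta(t)}|^p \le \delta^p C_{\ref{L:Sampling-Difference}} e^{C_{\ref{L:Sampling-Difference}}T}$. There is no serious obstacle here: the estimate is essentially immediate once one has the uniform bound on the trajectory, so the only real content is the appeal to Lemma \ref{L:Nonlinear-Sys-Sampling-Est} to control the vector field along $x_\cdot$; everything else is the elementary observation that the sampling gap is at most $\delta$.
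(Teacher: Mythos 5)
Your proof is correct and follows essentially the same route as the paper: both use the integral form of \eqref{E:nonlin-sys} over $[\pi_\delta(t),t]$, the linear growth of $f,\kappa$ together with the boundedness of $g$, the a priori bound of Lemma \ref{L:Nonlinear-Sys-Sampling-Est}, and the observation that $t-\pi_\delta(t)<\delta$. The only (immaterial) difference is that for $p=2$ the paper applies H\"older's inequality to the integral before invoking Lemma \ref{L:Nonlinear-Sys-Sampling-Est}, whereas you simply square the $p=1$ bound, which is equally valid and slightly more direct.
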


\begin{lemma}\label{L:L2-Est}
Let $X_t^{\eps,\delta}$ be the strong solution of \eqref{E:sie}. Then, for $p \in \{1,2\}$ and $T > 0,$ there exists a positive constant $C_{\ref{L:L2-Est}}$ 
such that
\begin{equation}\label{E:L2-Est}
\begin{aligned}
\BE \left[\sup_{0\le t \le T}\left|X_t^{\eps,\delta}\right|^2 \right]  \le C_{\ref{L:L2-Est}}e^{C_{\ref{L:L2-Est}}T},\quad \text{and} \quad 
\BE \left\{\left|\int_0^T \sigma(X_s^{\eps,\delta})dW_s \right|^p\right\}  \le C_{\ref{L:L2-Est}}e^{C_{\ref{L:L2-Est}}T}.
\end{aligned}
\end{equation}
\end{lemma}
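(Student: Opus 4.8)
The plan is to establish both bounds by a standard a priori moment estimate culminating in an application of Gronwall's inequality, with the second (stochastic-integral) bound dropping out as a quick consequence of the first. Set $\Phi(t) \triangleq \BE\left[\sup_{0 \le u \le t}|X_u^{\eps,\delta}|^2\right]$. Since $\Phi$ is not known a priori to be finite, I would first localize: introduce the stopping times $\tau_N \triangleq \inf\{t \ge 0 : |X_t^{\eps,\delta}| \ge N\}$, derive the estimate below with $X^{\eps,\delta}$ replaced throughout by the stopped process $X^{\eps,\delta}_{\,\cdot\, \wedge \tau_N}$ (so that every quantity in sight is finite and Gronwall genuinely applies), obtain a bound uniform in $N$, and finally recover the claim by letting $N \to \infty$ through Fatou's lemma. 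For readability I will suppress the localization in what follows.

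Starting from the integral form of \eqref{E:sie}, I would take the supremum over $[0,t]$, square, and split into three pieces—initial condition, drift integral, and stochastic integral—using the elementary inequality \eqref{E:Triangle-Inequ} with $n=3$, $p=2$. For the drift the crucial point is the product term $g(X^{\eps,\delta}_s)\kappa(X^{\eps,\delta}_{\pi_\delta(s)})$: were $g$ merely of linear growth this would be quadratic in the state and the moment bound could fail, but Assumption \ref{A:Boundedness} guarantees that $g$ is bounded, so together with the linear growth of $f$ and $\kappa$ from Assumption \ref{A:Smooth-LinearGrow} the integrand is dominated by $C(1 + |X^{\eps,\delta}_s| + |X^{\eps,\delta}_{\pi_\delta(s)}|)$. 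A Cauchy--Schwarz bound in the time variable then controls the squared drift integral by $CT \int_0^t (1 + |X^{\eps,\delta}_s|^2 + |X^{\eps,\delta}_{\pi_\delta(s)}|^2)\, ds$. For the stochastic integral I would apply Doob's $L^2$ maximal inequality, then the Itô isometry and the linear growth of $\sigma$, bounding its expected squared supremum by $C\eps^2 \int_0^t (1 + \BE|X^{\eps,\delta}_s|^2)\, ds$. Taking expectations and using $\eps \in (0,1)$, together with the observation that $\pi_\delta(s) \le s$ so that both $\BE|X^{\eps,\delta}_s|^2$ and $\BE|X^{\eps,\delta}_{\pi_\delta(s)}|^2$ are at most $\Phi(s)$, I arrive at an inequality of the form $\Phi(t) \le C_1 + C_2 \int_0^t \Phi(s)\, ds$, where $C_1, C_2$ absorb $T$, $x_0$, and the remaining fixed parameters per the stated conventions.

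Gronwall's inequality then gives $\Phi(t) \le C_1 e^{C_2 t}$, yielding the first bound after setting $C_{\ref{L:L2-Est}} = \max\{C_1, C_2\}$. For the second bound, the case $p=2$ follows immediately from the Itô isometry, the linear growth of $\sigma$, and the first bound, namely $\BE\left|\int_0^T \sigma(X^{\eps,\delta}_s)\, dW_s\right|^2 \le C \int_0^T (1 + \Phi(s))\, ds \le C e^{CT}$; the case $p=1$ then follows by the Cauchy--Schwarz (Lyapunov) inequality $\BE\left|\,\cdot\,\right| \le \left(\BE\left|\,\cdot\,\right|^2\right)^{1/2}$, absorbing the square root into the constant. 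I expect the only genuine subtlety to be the localization needed to justify Gronwall before finiteness of $\Phi$ is known; the rest is a routine, self-contained calculation. The one structural point worth flagging is the essential use of the boundedness of $g$ from Assumption \ref{A:Boundedness} to linearize the product term in the drift—this is precisely what prevents the sampled factor $\kappa(X^{\eps,\delta}_{\pi_\delta(s)})$ from multiplying $|X^{\eps,\delta}_s|$ and destroying the linear-growth structure on which the entire estimate rests.
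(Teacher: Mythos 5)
Your proposal is correct and follows essentially the same route as the paper: integral form of \eqref{E:sie}, splitting via \eqref{E:Triangle-Inequ} and H\"older, boundedness of $g$ (Assumption \ref{A:Boundedness}) combined with linear growth of $f,\kappa$ to keep the drift linear in the state, Doob's $L^2$ maximal inequality plus the It\^o isometry for the martingale term, the observation $\pi_\delta(s)\le s$ to fold the sampled term into $\Phi(s)$, and Gronwall; your explicit localization via $\tau_N$ is a rigor point the paper leaves implicit, not a divergence. The one genuine difference is the case $p=1$ of the second bound: you deduce it from the $p=2$ bound via the Lyapunov inequality $\BE|\cdot|\le(\BE|\cdot|^2)^{1/2}$, which is simpler and perfectly adequate for the lemma as stated, whereas the paper instead applies the Burkholder--Davis--Gundy inequalities componentwise followed by Jensen's inequality for concave functions. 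The paper's heavier route is not gratuitous: it establishes the stronger estimate $\BE\bigl[\sup_{0\le t\le T}\bigl|\int_0^t \sigma(X_s^{\eps,\delta})\,dW_s\bigr|\bigr]\le Ce^{CT}$ (equation \eqref{E:L1-Est}), with the supremum \emph{inside} the expectation, and this byproduct is reused in the proof of Proposition \ref{P:LLN-Stoc-App} to handle the $p=1$ case there; your shortcut would not deliver that supremum bound, so if you adopted it you would need a separate BDG argument later. Otherwise the two arguments coincide.
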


\begin{proof}[Proof of Proposition \ref{P:LLN-Stoc-App}]
For $t\ge 0$, $p \in \{1,2\}$, we use the integral representations of $x_t^\delta$ and $X_t^{\eps,\delta}$ from \eqref{E:ie} and \eqref{E:sie} to get 
\begin{multline*}
|X_t^{\eps,\delta}-x_t^\delta|^p  \le \\
C\left[ \left|\int_0^t[f(X_s^{\eps,\delta})-f(x_s^{\delta})]ds\right|^p + \left|\int_0^t\left[g(X_s^{\eps,\delta})\kappa(X_{\pi_\delta(s)}^{\eps,\delta})-g(x_s^\delta)\kappa(x_{\pi_{\delta}(s)}^\delta)\right]ds\right|^p + \eps^p\left|\int_0^t \sigma(X_s^{\eps,\delta})dW_s \right|^p\right].
\end{multline*}
Now, using H$\ddot{\text{o}}$lder's inequality for $p=2$ and the inequality $|\int \cdot|\le \int|\cdot|$ for Lebesgue-Stieltjes integrals for $p=1$, we get
\begin{multline}\label{E:LLN-Stoch-App}
|X_t^{\eps,\delta}-x_t^{\delta}|^p \le \\ C \left[ \int_0^t \left|f(X_s^{\eps,\delta})-f(x_s^\delta)\right|^p ds + \underbrace{\int_0^t \left|g(X_s^{\eps,\delta})\kappa(X_{\pi_\delta(s)}^{\eps,\delta})-g(x_s^\delta)\kappa(x_{\pi_{\delta}(s)}^\delta)\right|^p ds}_{J_1} +\eps^p\left|\int_0^t \sigma(X_s^{\eps,\delta})dW_s \right|^p\right].
\end{multline}
Adding and subtracting $\kappa(x_{\pi_\delta(s)}^{\delta})$ in the first term of the integrand of $J_1,$ and then using \eqref{E:Triangle-Inequ}, we have
\begin{equation*}
\begin{aligned}
J_1 & = \int_0^t \left|g(X_s^{\eps,\delta})\left\{\kappa(X_{\pi_\delta(s)}^{\eps,\delta})-\kappa(x_{\pi_\delta(s)}^{\delta})+\kappa(x_{\pi_\delta(s)}^{\delta})\right\}-g(x_s^\delta)\kappa(x_{\pi_\delta(s)}^{\delta})\right|^p ds\\
& \le C\left\{\int_0^t |g(X_s^{\eps,\delta})|^p\left|\kappa(X_{\pi_\delta(s)}^{\eps,\delta})-\kappa(x_{\pi_\delta(s)}^{\delta})\right|^p ds + \int_0^t \left|g(X_s^{\eps,\delta})-g(x_s^\delta)\right|^p\left|\kappa(x_{\pi_\delta(s)}^{\delta})\right|^p ds \right\}.
\end{aligned}
\end{equation*}
Now, using the Lipschitz continuity of $\kappa$ and $g$, boundedness of $g$, and the linear growth of $\kappa$ as in Assumptions \ref{A:Smooth-LinearGrow}, \ref{A:Boundedness}, we get
\begin{equation*}
J_1 \le C\left\{ \int_0^t \sup_{0\le r \le s}|X_r^{\eps,\delta}-x_r^{\delta}|^p ds + \int_0^t \left(1+\sup_{0\le r \le s}|x_r^\delta|^p\right) \sup_{0\le r \le s}|X_r^{\eps,\delta}-x_r^\delta|^p ds \right\}.
\end{equation*}
Returning to \eqref{E:LLN-Stoch-App}, we use the above estimate on $J_1$, the Lipschitz continuity of $f$, and Lemma \ref{L:Nonlinear-Sys-Sampling-Est} to get $|X_t^{\eps,\delta}-x_t^\delta|^p \le C\left[\int_0^t \sup_{0\le r \le s}|X_r^{\eps,\delta}-x_r^\delta|^p ds + \eps^p\sup_{0\le t \le T}\left|\int_0^t \sigma(X_s^{\eps,\delta})dW_s \right|^p\right]$. Taking expectations, we have 
\begin{equation*}
\BE \left[ \sup_{0\le t \le T}|X_t^{\eps,\delta}-x_t^\delta|^p \right] \le C\left[ \int_0^T \BE \left(\sup_{0\le r \le s}|X_r^{\eps,\delta}-x_r^\delta|^p\right)ds + \eps^p \BE \left\{ \sup_{0\le t \le T}\left|\int_0^t \sigma(X_s^{\eps,\delta})dW_s \right|^p\right\}\right].
\end{equation*}
Using Doob's maximal inequality \cite[Theorem 1.3.8(iv)]{KS91} for $p=2$ and \eqref{E:L1-Est} for $p=1$ for last term in the above equation, and recalling Lemma \ref{L:L2-Est}, the claim follows by an application of Gronwall's inequality. 

\end{proof}

\begin{proof}[Proof of Proposition \ref{P:LLN-Det-Approx}]
Using the integral representations for $x_t^\delta$ and $x_t$ from \eqref{E:ie} and \eqref{E:nonlin-sys}, followed by H$\ddot{\text{o}}$lder's inequality for $p=2$ and the integral inequality $|\int \cdot|\le \int |\cdot|$ for $p=1$, we get
\begin{equation}\label{E:LLN-Det-Approx}
|x_t^\delta-x_t|^p \le C\left[\int_0^t \left|f(x_s^\delta)-f(x_s)\right|^pds + \underbrace{\int_0^t \left|g(x_s^\delta)\kappa(x_{\pi_{\delta}(s)}^\delta)-g(x_s)\kappa(x_s)\right|^p ds}_{J_2} \right].
\end{equation}
For $J_2$, we now use \eqref{E:Triangle-Inequ} to get
\begin{equation*}
\begin{aligned} 
J_2 & = \int_0^t \left|g(x_s^\delta)\left\{\kappa(x_{\pi_{\delta}(s)}^\delta)-\kappa(x_{\pi_{\delta}(s)})\right\}+ \left\{g(x_s^\delta)-g(x_s)\right\}\kappa(x_{\pi_{\delta}(s)})+g(x_s)\left[\kappa(x_{\pi_{\delta}(s)})-\kappa(x_s)\right]\right|^p ds\\
& \le C\left( \int_0^t|g(x_s^\delta)|^p\left|\kappa(x_{\pi_{\delta}(s)}^\delta)-\kappa(x_{\pi_{\delta}(s)})\right|^p ds+ \int_0^t |g(x_s^\delta)-g(x_s)|^p\left|\kappa(x_{\pi_{\delta}(s)})\right|^p ds\right.\\
& \left.+\int_0^t |g(x_s)|^p \left|\kappa(x_{\pi_{\delta}(s)})- \kappa(x_s)\right|^p ds \right).
\end{aligned}
\end{equation*}
Next, using the boundedness of $g$, and Lipschitz continuity and linear growth of $\kappa$ as in Assumptions \ref{A:Smooth-LinearGrow}, \ref{A:Boundedness}, followed by Lemmas \ref{L:Nonlinear-Sys-Sampling-Est} and \ref{L:Sampling-Difference}, we get
\begin{equation*}
J_2 \le C\left\{\int_0^T \sup_{0\le r \le s}|x_r^\delta-x_r|^p ds + \int_0^T \sup_{0\le r \le s}|x_r^\delta-x_r|^p(1+|x_{\pi_{\delta}(s)}|^p)ds + \int_0^T |x_{\pi_{\delta}(s)}-x_s|^p ds \right\}.
\end{equation*}
Thus,
$J_2 \le C\int_0^T \sup_{0\le r \le s}|x_r^\delta-x_r|^p ds + \delta^p Ce^{CT}$.
Now, using the Lipschitz continuity of $f$ in \eqref{E:LLN-Det-Approx} and the obtained estimate for $J_2$, an application of Gronwall's inequality gives the required result.
\end{proof}
We now prove our main result.
\begin{proof}[Proof of Theorem \ref{T:LLN-II}]
For $p=1,2,$ using the inequality \eqref{E:Triangle-Inequ}, we have 
\begin{equation*}
\begin{aligned}
|X_t^{\eps,\delta}-x_t|^p \le \left[|X_t^{\eps,\delta}-x_t^{\delta}|+|x_t^{\delta}-x_t|\right]^p
\le C\left[|X_t^{\eps,\delta}-x_t^{\delta}|^p + |x_t^{\delta}-x_t|^p \right].
\end{aligned}
\end{equation*}
Hence 
\begin{equation*}
\sup_{0\le t \le T}|X_t^{\eps,\delta}-x_t|^p \le C\left[\sup_{0\le t \le T}|X_t^{\eps,\delta}-x_t^{\delta}|^p + \sup_{0 \le t \le T}|x_t^{\delta}-x_t|^p \right].
\end{equation*}
The proof is now completed using Propositions \ref{P:LLN-Stoc-App} and \ref{P:LLN-Det-Approx}. 
\end{proof}


\subsection{Proofs of supporting Lemmas}\label{S:LLN-Lemma}
We now provide the proofs of Lemmas \ref{L:Nonlinear-Sys-Sampling-Est}, \ref{L:Sampling-Difference}, and \ref{L:L2-Est}. Here, we will frequently be using (without explicit mention) Assumptions \ref{A:Smooth-LinearGrow} and \ref{A:Boundedness}, which encode Lipschitz and linear growth assumptions on $f,g,\kappa,\sigma$, and boundedness of $g$. 

\begin{proof}[Proof of Lemma \ref{L:Nonlinear-Sys-Sampling-Est}]
Since $x_t^\delta = x_0 + \int_0^t [f(x_s^\delta)+ g(x_s^\delta)\kappa(x_{\pi_\delta(s)}^\delta)] ds$, using \eqref{E:Triangle-Inequ} and H$\ddot{\text{o}}$lder's inequality for $p=2$ and $|\int \cdot| \le \int |\cdot|$ for $p=1$, 
we get
\begin{equation*}
\begin{aligned}
|x_t^\delta|^p &\le C\left\{|x_0|^p + \int_0^t \left((1+|x_s^\delta|^p)+(1+|x_{\pi_{\delta}(s)}^\delta|^p) \right) ds \right\} 
 \le C\left\{1 + \int_0^t \sup_{0\le r \le u}|x_r^\delta|^p du\right\}.
\end{aligned}
\end{equation*}
The desired bound on $\sup_{0 \le t \le T}|x^\delta_t|^p$ now follows from Gronwall's inequality. A similar straightforward calculation yields
$|x_t|^p \le C \left\{ 1 + \int_0^t \sup_{0 \le r \le s}|x_r|^p ds \right\}$.
Once again, using Gronwall's inequality, we get the required bound. 
\end{proof}


\begin{proof}[Proof of Lemma \ref{L:Sampling-Difference}]
For $p=1,$ 
equation \eqref{E:nonlin-sys} implies
\begin{equation}\label{E:System-Difference-Est}
|x_t-x_{\pi_{\delta}(t)}|\le \int_{\pi_{\delta}(t)}^t|f(x_s)+g(x_s)\kappa(x_s)|\thinspace ds \le C\left\{\delta + \int_{\pi_{\delta}(t)}
^{t}|x_s|\thinspace ds \right\}.
\end{equation}
The required estimate 
is obtained by combining \eqref{E:System-Difference-Est} and Lemma \ref{L:Nonlinear-Sys-Sampling-Est}. For $p=2,$ 
we use H$\ddot{\text{o}}$lder's inequality in \eqref{E:nonlin-sys}, and then \eqref{E:Triangle-Inequ} to get
\begin{equation*}
\begin{aligned}
|x_t-x_{\pi_{\delta}(t)}|^2 =\left|\int_{\pi_{\delta}(t)}^{t}[f(x_u)+g(x_u)\kappa(x_u)]du\right|^2 & \le (t-\pi_{\delta}(t))\int_{\pi_{\delta}(t)}^{t} \left|f(x_u)+g(x_u)\kappa(x_u)\right|^2 du\\
& \le \delta C \int_{\pi_{\delta}(t)}^{t} \left\{|f(x_u)|^2 +|g(x_u)\kappa(x_u)|^2\right\}du.
\end{aligned}
\end{equation*}
Therefore, we have
$|x_t-x_{\pi_{\delta}(t)}|^2 \le \delta^2 C + \delta C \int_{\pi_{\delta}(t)}^{t}|x_u|^2 du.$
Using Lemma \ref{L:Nonlinear-Sys-Sampling-Est}, we get the desired result.
\end{proof}



\begin{proof}[Proof of Lemma \ref{L:L2-Est}]
Let $|\cdot|$ be the one norm. Using \eqref{E:Triangle-Inequ} and H$\ddot{\text{o}}$lder's inequality in \eqref{E:sie},
we get
 \begin{equation*}
 \begin{aligned}
 \left|X_t^{\eps,\delta}\right|^2 & \le C \left[|x_0|^2 + T \int_0^t \left(|f(X_s^{\eps,\delta})|^2+\left|g(X^{\eps,\delta}_s)\kappa (X^{\eps,\delta}_{\pi_\delta(s)})\right|^2 \right)ds + \eps^2 \left|\int_0^t \sigma(X_s^{\eps,\delta})dW_s \right|^2  \right]\\
 & \le C \left[ 1+ \int_0^T \sup_{0 \le r \le s}\left|X_r^{\eps,\delta}\right|^2ds + \eps^2 \sup_{0\le t \le T} \left|\int_0^t \sigma(X_s^{\eps,\delta})dW_s \right|^2 \right].
 \end{aligned}
 \end{equation*}
 Now, using Doob's maximal inequality \cite[Theorem 1.3.8(iv)]{KS91}, we get
\begin{equation*}
\BE \left[ \sup_{0 \le t \le T}\left|X_t^{\eps,\delta}\right|^2\right] \le C \left[ 1+ \BE\left(\int_0^T \sup_{0 \le r \le s}\left|X_r^{\eps,\delta}\right|^2ds\right) + \eps^2 \BE \left(\left|\int_0^T \sigma(X_s^{\eps,\delta})dW_s \right|^2\right) \right].
\end{equation*} 
Since $\left|\int_0^T \sigma(X_s^{\eps,\delta})dW_s \right|^2\le C \sum_{i=1}^n\sum_{j=1}^n\left|\int_0^T \sigma_{ji}(X_s^{\eps,\delta})dW_s^i \right|^2$, using the Ito isometry, we get
\begin{equation}\label{E:L2-Est-1}
\BE\left(\left|\int_0^T \sigma(X_s^{\eps,\delta})dW_s \right|^2\right)\le C\left[1+ \BE \left( \int_0^T \sup_{0 \le r \le s}|X_r^{\eps,\delta}|^2ds \right)\right].
\end{equation} 
Hence,\\
$\BE \left[ \sup_{0 \le t \le T}\left|X_t^{\eps,\delta}\right|^2\right] \le C \left[ 1+ (\eps^2 +1)\int_0^T \BE \left[\sup_{0 \le r \le s}\left|X_r^{\eps,\delta}\right|^2 \right]ds \right]$. 
The first claim in \eqref{E:L2-Est} now\\ easily follows by Gronwall's inequality. For the second part of the lemma (for $p=2$), we use the obtained estimate on $\BE\left[\sup_{0 \le t \le T} |X^{\eps,\delta}_t|^2 \right]$ together with equation \eqref{E:L2-Est-1} to get $\BE\left(\left|\int_0^T \sigma(X_s^{\eps,\delta})dW_s \right|^2\right)\le Ce^{CT}.$ For the case $p=1$, we start by noting that 
 \begin{equation*}
\BE \left(\left|\int_0^T \sigma(X_s^{\eps,\delta})dW_s \right|\right) \le\BE \left[\sup_{0\le t \le T}\left|\int_0^t \sigma(X_s^{\eps,\delta})dW_s \right|\right] \le \sum_{i=1}^{n}\sum_{j=1}^{n}\BE\left[ \sup_{0\le t \le T}\left|\int_0^t \sigma_{ji}(X_s^{\eps,\delta})dW_s^i \right| \right].
 \end{equation*}
Now, using the Burkholder-Davis-Gundy inequalities \cite[Theorem 3.3.28]{KS91}, followed by Jensen's inequality for concave functions,\footnote{For a concave function $\varphi$ and a random variable $X$ with $\BE[|X|]$, $\BE[|\varphi(X)|]<\infty$, we have $ \BE[\varphi(X)]\le \varphi(\BE[X])$.} 
 we get
\begin{multline}\label{E:L1-Est}
\BE \left[\sup_{0\le t \le T}\left|\int_0^t \sigma(X_s^{\eps,\delta})dW_s \right|\right] \le C \sum_{i=1}^{n}\sum_{j=1}^{n} \BE \left[ \left( \int_0^T \sigma_{ji}^2(X_s^{\eps,\delta})ds\right)^{\frac{1}{2}}\right]\\ \le C\sum_{i=1}^{n}\sum_{j=1}^{n} \left( \BE \int_0^T \sigma_{ji}^2(X_s^{\eps,\delta})ds\right)^{\frac{1}{2}}
 \le  C  \left(  \int_0^T \left\{1+\BE\left(|X_s^{\eps,\delta}|^2\right) \right\}ds\right)^{\frac{1}{2}}.
\end{multline} 
The required result is now easily obtained by using the first part of this lemma.
\end{proof}

\section{Analysis of fluctuations: Regimes 1 and 2}\label{S:FCLT}
In this section, we prove our second main result, namely Theorem \ref{T:fluctuations-R-1-2}. Our thoughts are structured as follows. First, in Section \ref{S:CLT-Prop}, we state (without proof) Propositions \ref{P:MainTermApprox} through \ref{P:Noise-Term-Est}, which are the principal building blocks in the proof of Theorem \ref{T:fluctuations-R-1-2}, and show how to assemble these pieces to obtain the proof of Theorem \ref{T:fluctuations-R-1-2}. The subsequent sections are devoted to the proofs of the aforementioned propositions. In Section \ref{S:MainTermApp}, we prove Proposition \ref{P:MainTermApprox}. Since this involves several intricate calculations, our arguments are broken down into a series of bite-sized lemmas, the proofs of which are deferred to Section \ref{S:CLT-Lemmas}. The proofs of Propositions \ref{P:f-Taylor-Approx} through \ref{P:Noise-Term-Est} are provided in Section \ref{S:PropositionsProofs}.

%
%

%
%
%
%

\subsection{Proof of Theorem \ref{T:fluctuations-R-1-2}}\label{S:CLT-Prop}

We saw in Section \ref{S:ProblemStatementResults} that for Regimes 1 and 2, the rescaled fluctuation process $Z_t^{\eps,\delta}\triangleq {\eps}^{-1}({X_t^{\eps,\delta}-x_t})$ satisfies the equation \eqref{E:Cent-limit-eq-Rem}. Further, anticipating that the remainder term $\mathsf{R}^{\eps,\delta}_t = \sum_{i=1}^4 \mathsf{R}_i^{\eps,\delta}(t)$ appearing in \eqref{E:Cent-limit-eq-Rem} (with decomposition given in \eqref{E:Remainder-Terms}) would be of order $\mathscr{O}(\eps)$, we informally summarized the description of $Z^{\eps,\delta}_t$ by \eqref{E:fluct-processes}. To transition, then, from equation \eqref{E:Cent-limit-eq-Rem} (or \eqref{E:fluct-processes}) for $Z^{\eps,\delta}_t$ to the equation \eqref{E:lim-fluct-R-1-2} for the limiting fluctuation process $Z_t$, it is evident that we need to show that 
\begin{equation}\label{E:ell}
\lim_{\substack{\eps,\delta \searrow 0\\ \delta/\eps \to \cc}} \int_0^t \gDk(x_s)\frac{X_s^{\varepsilon, \delta}-X_{\pi_\delta(s)}^{\varepsilon, \delta}}{\eps}\thinspace ds =\ell(t) \quad \text{where} \quad 
\ell(t) \triangleq \frac{\cc}{2} \int_0^t \gDk(x_s)[f(x_s)+g(x_s)\kappa(x_s)] \thinspace ds, 
\end{equation}
while also obtaining estimates for $\left|\int_0^t \gDk(x_s)\frac{X_s^{\varepsilon, \delta}-X_{\pi_\delta(s)}^{\varepsilon, \delta}}{\eps}\thinspace ds - \ell(t)\right|$  and the remainder terms $\mathsf{R}_i^{\eps,\delta}(t)$, $1 \le i \le 4$. We will find it useful later to decompose the term $\ell(t)$ according to 
\begin{equation}\label{E:ell_1-and-ell_2}
\ell(t)= \ell_1(t)+\ell_2(t)\triangleq \frac{\cc}{2} \int_0^t \gDk(x_s)[f(x_s)]ds+ \frac{\cc}{2} \int_0^t\gDk(x_s)[g(x_s)\kappa(x_s)] ds.
\end{equation}


We start with Proposition \ref{P:MainTermApprox}, which allows us to estimate the error between $\int_0^t \gDk(x_s)\frac{X_s^{\varepsilon, \delta}-X_{\pi_\delta(s)}^{\varepsilon, \delta}}{\eps}\thinspace ds$ and $\ell(t)$, yielding, in particular, the convergence of the former to the latter. This is the key step in identification of the effective drift term in \eqref{E:lim-fluct-R-1-2}. Before we begin, we note that since we will always be working in Regime $i \in \{1,2\}$, i.e., $\lim_{\eps \searrow 0}\delta_\eps/\eps = \cc \in [0,\infty)$, we recall from \eqref{E:eps0} that choosing $0<\eps<\eps_0$ ensures that $\delta<(\cc+1)\eps$. 
 

\begin{proposition}\label{P:MainTermApprox}
Let $\ell(t)$ be as in equation \eqref{E:ell_1-and-ell_2}. For any fixed $T>0$, there exists a positive constant $C_{\ref{P:MainTermApprox}}$ such that whenever $0<\eps<\eps_0$, we have
\begin{equation*}
\BE \left[\sup_{0 \le t \le T} \left|\int_0^t \gDk(x_s)\frac{X_s^{\varepsilon, \delta}-X_{\pi_\delta(s)}^{\varepsilon, \delta}}{\eps}\thinspace ds - 
\ell(t) \right|\right] \le  (\cc+1)\left(\varkappa(\eps)+\sqrt{\delta}+\eps\right)C_{\ref{P:MainTermApprox}}e^{C_{\ref{P:MainTermApprox}}T},
\end{equation*}
where $\kap(\eps)$ defined in \eqref{E:kappa} satisfies $\lim_{\eps \searrow 0}\kap(\eps)=0$.
\end{proposition}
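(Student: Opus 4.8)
The plan is to start from the integral form of \eqref{E:sie} restricted to the sampling interval containing $s$. Since $\pi_\delta(r)=\pi_\delta(s)$ for every $r\in[\pi_\delta(s),s]$, one has
\[
\frac{X_s^{\eps,\delta}-X_{\pi_\delta(s)}^{\eps,\delta}}{\eps}=\frac1\eps\int_{\pi_\delta(s)}^s\bigl[f(X_r^{\eps,\delta})+g(X_r^{\eps,\delta})\kappa(X_{\pi_\delta(s)}^{\eps,\delta})\bigr]\,dr+\int_{\pi_\delta(s)}^s\sigma(X_r^{\eps,\delta})\,dW_r.
\]
Writing $h(s)\triangleq\gDk(x_s)\bigl[f(x_s)+g(x_s)\kappa(x_s)\bigr]$ and using $\tfrac{s-\pi_\delta(s)}{\eps}(f(x_s)+g(x_s)\kappa(x_s))=\tfrac1\eps\int_{\pi_\delta(s)}^s(f(x_s)+g(x_s)\kappa(x_s))\,dr$, I would decompose
\[
\int_0^t\gDk(x_s)\frac{X_s^{\eps,\delta}-X_{\pi_\delta(s)}^{\eps,\delta}}{\eps}\,ds-\ell(t)=\mathrm{(I)}+\mathrm{(II)}+\mathrm{(III)},
\]
where $\mathrm{(I)}=\int_0^t\gDk(x_s)\tfrac1\eps\int_{\pi_\delta(s)}^s[f(X_r^{\eps,\delta})+g(X_r^{\eps,\delta})\kappa(X_{\pi_\delta(s)}^{\eps,\delta})-f(x_s)-g(x_s)\kappa(x_s)]\,dr\,ds$ is the drift-linearization error, $\mathrm{(II)}=\int_0^t\gDk(x_s)\bigl(\int_{\pi_\delta(s)}^s\sigma(X_r^{\eps,\delta})\,dW_r\bigr)\,ds$ is the stochastic contribution, and $\mathrm{(III)}=\int_0^t h(s)\tfrac{s-\pi_\delta(s)}{\eps}\,ds-\tfrac{\cc}{2}\int_0^t h(s)\,ds$ is the averaging error.

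For $\mathrm{(I)}$, I would bound the integrand using the global Lipschitz continuity of $f,g,\kappa$ and boundedness of $g,D\kappa,\gDk$ (Assumptions \ref{A:Smooth-LinearGrow}, \ref{A:Boundedness}) by $\tfrac{C}{\eps}\int_{\pi_\delta(s)}^s(|X_r^{\eps,\delta}-x_s|+|X_{\pi_\delta(s)}^{\eps,\delta}-x_s|)\,dr$. Since the inner interval has length at most $\delta$, and $|X_r^{\eps,\delta}-x_s|\le|X_r^{\eps,\delta}-x_r|+|x_r-x_s|$ is of order $\max\{\eps,\delta\}$ in $L^1$ by Theorem \ref{T:LLN-II} together with the Lipschitz continuity of $t\mapsto x_t$ (from the bound of Lemma \ref{L:Nonlinear-Sys-Sampling-Est}), the integrand being nonnegative lets me take the supremum over $t$ and then expectations, yielding $\BE[\sup_{0\le t\le T}|\mathrm{(I)}|]\le C\tfrac{\delta}{\eps}\max\{\eps,\delta\}e^{CT}$, which is of order $(\cc+1)\eps$ after using $\delta\le(\cc+1)\eps$ from \eqref{E:eps0}. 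For $\mathrm{(II)}$, boundedness of $\gDk(x_s)$ gives $\sup_{0\le t\le T}|\mathrm{(II)}|\le C\int_0^T|\int_{\pi_\delta(s)}^s\sigma(X_r^{\eps,\delta})\,dW_r|\,ds$; applying the It\^o isometry, then Jensen's inequality, and the moment bound of Lemma \ref{L:L2-Est} to each inner integral gives $\BE|\int_{\pi_\delta(s)}^s\sigma(X_r^{\eps,\delta})\,dW_r|\le(C(s-\pi_\delta(s)))^{1/2}\le C\sqrt\delta$, hence $\BE[\sup_{0\le t\le T}|\mathrm{(II)}|]\le C\sqrt\delta\,e^{CT}$.

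The crux is $\mathrm{(III)}$, the averaging of the sawtooth $\phi_\eps(s)\triangleq\tfrac{s-\pi_\delta(s)}{\eps}$ against the slowly varying $h$. I would write $\mathrm{(III)}=\int_0^t h(s)\bigl(\phi_\eps(s)-\tfrac{\delta}{2\eps}\bigr)\,ds+\tfrac12\bigl(\tfrac\delta\eps-\cc\bigr)\int_0^t h(s)\,ds$. The second term is at most $\tfrac12\kap(\eps)\int_0^T|h(s)|\,ds\le C\kap(\eps)$ since $h$ is bounded, producing the $\kap(\eps)$ contribution. For the first term I would partition $[0,t]$ into the sampling intervals $[k\delta,(k+1)\delta)$; because $\phi_\eps(s)-\tfrac\delta{2\eps}$ has mean zero over each such interval, I can replace $h(s)$ by $h(k\delta)$ at the cost of the oscillation $|h(s)-h(k\delta)|\le C\delta$ (as $h$ is Lipschitz, $x_t$ having bounded derivative and $\gDk,f,g,\kappa$ being Lipschitz on the bounded range of $x$). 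Bounding $|\phi_\eps(s)-\tfrac\delta{2\eps}|\le\tfrac\delta{2\eps}\le\tfrac{\cc+1}{2}$ and summing the roughly $t/\delta$ contributions of size $C\delta\cdot\tfrac{\cc+1}{2}\cdot\delta$ produces a bound of order $(\cc+1)\delta$, which is dominated by $\sqrt\delta$. Collecting the three estimates yields the claimed $(\cc+1)\bigl(\kap(\eps)+\sqrt\delta+\eps\bigr)C_{\ref{P:MainTermApprox}}e^{C_{\ref{P:MainTermApprox}}T}$ bound.

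I expect the main obstacle to be the averaging step $\mathrm{(III)}$: correctly extracting the factor $\cc/2$ hinges on recognizing that the time average of $\phi_\eps$ over one sampling period equals $\tfrac{\delta}{2\eps}\to\tfrac\cc2$, and on controlling the mean-zero oscillation against the Lipschitz modulus of $h$. This is precisely where the interplay between noise and sampling is encoded. Some care is also needed with the final incomplete sampling interval near $t$, and with the joint measurability of $(s,\omega)\mapsto\int_{\pi_\delta(s)}^s\sigma(X_r^{\eps,\delta})\,dW_r$ required to apply Fubini's theorem in the estimate for $\mathrm{(II)}$.
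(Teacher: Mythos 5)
Your proposal is correct, but it takes a genuinely different and considerably more economical route than the paper. The paper's proof (Lemma \ref{L:M_estimates} and the decomposition tree of Figure \ref{F:Decomposition}) splits $(1/\eps)\int_0^t \gDk(x_s)[X_s^{\eps,\delta}-X_{\pi_\delta(s)}^{\eps,\delta}]\thinspace ds$ into $\sfM_1^{\eps,\delta},\dots,\sfM_4^{\eps,\delta}$, treating the $f$ and $g\kappa$ parts of the drift separately and carrying sampled-time evaluations throughout; the two halves of the effective drift then emerge separately, $\ell_1(t)$ from $l_1^{\eps,\delta}(t)$ (Lemma \ref{L:ell_1Approx}) and $\ell_2(t)$ from $\sfP_2^{\eps,\delta}(t)$ (Lemma \ref{L:ell2Approximation}), each via the Riemann-sum comparison of Lemma \ref{L:Remainder-Term-Simplification}, with roughly ten further lemmas disposing of the remaining $l_i$, $L_i$, $G_i$, $\sfP_i$, $\sfN_i$, $Q_i$ pieces. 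You instead freeze the \emph{entire} drift at its deterministic value $f(x_s)+g(x_s)\kappa(x_s)$ in a single step (using, correctly, that $\pi_\delta(r)=\pi_\delta(s)$ for $r\in[\pi_\delta(s),s]$), so the whole effective-drift mechanism is concentrated in your one sawtooth term $\mathrm{(III)}$, which you resolve via the mean-zero property of $\phi_\eps(s)-\delta/(2\eps)$, where $\phi_\eps(s)\triangleq(s-\pi_\delta(s))/\eps$, over each complete sampling period, together with the Lipschitz continuity of $h(s)=\gDk(x_s)[f(x_s)+g(x_s)\kappa(x_s)]$; your per-interval replacement $h(s)\mapsto h(k\delta)$ is the exact analogue of Lemma \ref{L:Remainder-Term-Simplification}, whose proof likewise reduces to $|\mathsf{h}(x_s)-\mathsf{h}(x_{i\delta})|\le\delta Ce^{CT}$. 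The load-bearing ingredients coincide in the two arguments: Theorem \ref{T:LLN-II} controls your term $\mathrm{(I)}$ just as it controls the paper's $\sfM_3^{\eps,\delta}$, $L_1^{\eps,\delta}$, $G_1^{\eps,\delta}$ and $\sfN_i^{\eps,\delta}$; your term $\mathrm{(II)}$ is literally the paper's $\sfM_4^{\eps,\delta}$, and your It\^o-isometry/Jensen bound reproduces the $O(\sqrt{\delta})$ of Lemma \ref{L:M4-Noise-Term-Small}; and in both proofs the factor $\cc/2$ comes from $\int_{k\delta}^{(k+1)\delta}(s-k\delta)\thinspace ds=\delta^2/2$ combined with $|\delta/\eps-\cc|=\kap(\eps)$. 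What your organization buys is brevity and a transparent view of where the effective drift originates; what the paper's buys is a stock of modular estimates reused elsewhere (e.g., Lemmas \ref{L:Kappa-Derivative-Est} and \ref{L:Sampling-Difference} also feed Propositions \ref{P:f-Taylor-Approx} through \ref{P:Sampling-Term-Approx}). Two points of bookkeeping, neither a gap: after substituting $\delta<(\cc+1)\eps$ from \eqref{E:eps0}, your bound for $\mathrm{(I)}$ is really of order $(\cc+1)^2\eps$ rather than $(\cc+1)\eps$ --- the paper's own Lemma \ref{L:M3_Estimate} carries the same $(\cc+1)^2$, absorbed into the final constant --- and the joint-measurability/Fubini point for $\mathrm{(II)}$, which you flag yourself, is sidestepped in the paper by bounding the pathwise supremum before taking expectations.
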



As noted earlier, we will tackle the proof of this result in Section \ref{S:MainTermApp}. We move next to Propositions \ref{P:f-Taylor-Approx} through \ref{P:Noise-Term-Est}. 


\begin{proposition}\label{P:f-Taylor-Approx}
Let $X_t^{\varepsilon,\delta}$ and $x_t$ solve \eqref{E:sie} and \eqref{E:nonlin-sys}, respectively. Then, for any fixed $T>0$, there exists a positive constant $C_{\ref{P:f-Taylor-Approx}}$ such that for any $0< \eps <\eps_0$, we have
\begin{equation*}
\begin{aligned}
\BE \left[\sup_{0\le t \le T}\int_0^t\left|\frac{f(X_s^{\eps,\delta})-f(x_s)}{\eps}-Df(x_s) Z_s^{\eps,\delta}\right|\thinspace ds\right] &\le (\eps +\cc \delta)C_{\ref{P:f-Taylor-Approx}}e^{C_{\ref{P:f-Taylor-Approx}}T},\\
\BE\left[\sup_{0\le t \le T}\int_0^t\left|g(x_s)\frac{\kappa(X_s^{\eps,\delta})-\kappa(x_s)}{\eps}-\gDk(x_s)Z_s^{\eps,\delta}\right|\thinspace ds\right] &\le (\eps +\cc \delta)C_{\ref{P:f-Taylor-Approx}}e^{C_{\ref{P:f-Taylor-Approx}}T}.
\end{aligned}
\end{equation*}
\end{proposition}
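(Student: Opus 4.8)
\textbf{Proof strategy for Proposition \ref{P:f-Taylor-Approx}.}

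The plan is to exploit a first-order Taylor expansion with integral (Lagrange) remainder for the smooth functions $f$ and $\kappa$, expanded about the deterministic trajectory $x_s$ and evaluated at the stochastic state $X_s^{\eps,\delta}$. For the first estimate, I would write
\begin{equation*}
f(X_s^{\eps,\delta})-f(x_s) = Df(x_s)(X_s^{\eps,\delta}-x_s) + \mathcal{E}_f(s),
\end{equation*}
where the remainder admits the exact representation $\mathcal{E}_f(s)=\int_0^1 [Df(x_s+\theta(X_s^{\eps,\delta}-x_s))-Df(x_s)]\thinspace d\theta \cdot (X_s^{\eps,\delta}-x_s)$. Dividing by $\eps$ and recalling that $Z_s^{\eps,\delta}=\eps^{-1}(X_s^{\eps,\delta}-x_s)$, the leading term cancels exactly against $Df(x_s)Z_s^{\eps,\delta}$, leaving only $\eps^{-1}\mathcal{E}_f(s)$ to be controlled. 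The key point is that, by Assumption \ref{A:Boundedness}, the second-order partials of $f$ are bounded, so $Df$ is globally Lipschitz; hence $|\mathcal{E}_f(s)| \le C|X_s^{\eps,\delta}-x_s|^2$, and therefore
\begin{equation*}
\frac{1}{\eps}\bigl|\mathcal{E}_f(s)\bigr| \le \frac{C}{\eps}|X_s^{\eps,\delta}-x_s|^2.
\end{equation*}

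First I would integrate in $s$, take the supremum over $t\in[0,T]$ and then the expectation, reducing the problem to bounding $\eps^{-1}\BE[\int_0^T |X_s^{\eps,\delta}-x_s|^2\thinspace ds]$. This is precisely where Theorem \ref{T:LLN-II} (with $p=2$) enters: it gives $\BE[\sup_{0\le s\le T}|X_s^{\eps,\delta}-x_s|^2]\le (\eps^2+\delta^2)C_{\ref{T:LLN-II}}e^{C_{\ref{T:LLN-II}}T}$, so after dividing by $\eps$ one obtains a bound of order $\eps^{-1}(\eps^2+\delta^2)=\eps+\delta^2/\eps$. Since we work in Regimes 1 or 2, equation \eqref{E:eps0} gives $\delta<(\cc+1)\eps$, whence $\delta^2/\eps \le (\cc+1)\delta$, and the first displayed bound $(\eps+\cc\delta)C_{\ref{P:f-Taylor-Approx}}e^{C_{\ref{P:f-Taylor-Approx}}T}$ follows after absorbing constants. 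The second estimate is entirely analogous: I would Taylor-expand $\kappa$ about $x_s$, use that $\gDk(x_s)=g(x_s)D\kappa(x_s)$ so that $g(x_s)D\kappa(x_s)Z_s^{\eps,\delta}$ cancels the linear term, and control the quadratic remainder using the boundedness of $g$ (Assumption \ref{A:Boundedness}) together with the boundedness of the second-order partials of $\kappa$.

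The main obstacle is really bookkeeping rather than any deep difficulty: one must keep careful track of how the $\eps^{-1}$ prefactor interacts with the quadratic remainder and verify that the extra factor of $\delta^2/\eps$ is correctly reabsorbed as $\cc\delta$ using \eqref{E:eps0}, so that the final rate is genuinely $\eps+\cc\delta$ rather than merely $O(\eps)$ with a hidden $\delta$-dependence. A secondary technical point is justifying the exchange of the supremum over $t$ with the $ds$-integral, but this is immediate here since the integrand $|\cdots|$ is nonnegative, so $\sup_{0\le t\le T}\int_0^t(\cdots)\thinspace ds = \int_0^T(\cdots)\thinspace ds$ and no martingale or Doob-type argument is needed. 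I expect the entire proof to reduce, for each of the two estimates, to a single invocation of Taylor's theorem, the global Lipschitz property of the relevant Jacobian, and the $L^2$ bound from Theorem \ref{T:LLN-II}.
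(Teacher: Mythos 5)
Your proposal is correct and takes essentially the same route as the paper: Taylor expansion of $f$ (resp.\ $\kappa$) about $x_s$ so that the linear term cancels $Df(x_s)Z_s^{\eps,\delta}$ (resp.\ $\gDk(x_s)Z_s^{\eps,\delta}$), a quadratic remainder controlled by the bounded second-order partials of Assumption \ref{A:Boundedness} (and boundedness of $g$), and then Theorem \ref{T:LLN-II} with $p=2$ combined with the regime bound $\delta<(\cc+1)\eps$ from \eqref{E:eps0}. The only cosmetic difference is that you use the integral (mean-value) form of the Taylor remainder and Lipschitzness of $Df$, whereas the paper invokes Proposition \ref{T:Taylor-Theorem} componentwise with Hessians at intermediate points; the two are interchangeable here.
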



\begin{proposition}\label{P:Add-Term-Approx}
For any fixed $T>0$, there exists a positive constant $C_{\ref{P:Add-Term-Approx}}$ such that for any $0< \eps <\eps_0$, we have
\begin{equation*}
\BE\left[\sup_{0 \le t \le T}\int_0^t\left|\frac{[g(X_s^{\eps,\delta})-g(x_s)]\kappa(X_s^{\eps,\delta})}{\eps}- \sum_{i=1}^{m}Dg_i(x_s)Z_s^{\eps,\delta}\kappa_i(x_s)\right|\thinspace ds\right]\le (\eps +\cc \delta)C_{\ref{P:Add-Term-Approx}}e^{C_{\ref{P:Add-Term-Approx}}T}.
\end{equation*}
\end{proposition}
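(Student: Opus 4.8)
The plan is to proceed exactly as in the proof of Proposition \ref{P:f-Taylor-Approx}, exploiting the column structure of $g$ to reduce the matrix estimate to a sum of scalar-weighted vector linearizations. Writing $g(x)\kappa(x)=\sum_{i=1}^m g_i(x)\kappa_i(x)$, the integrand in question equals $\sum_{i=1}^m\bigl\{\eps^{-1}[g_i(X_s^{\eps,\delta})-g_i(x_s)]\kappa_i(X_s^{\eps,\delta})-Dg_i(x_s)Z_s^{\eps,\delta}\kappa_i(x_s)\bigr\}$. For each $i$ I would add and subtract $\eps^{-1}[g_i(X_s^{\eps,\delta})-g_i(x_s)]\kappa_i(x_s)$, splitting the $i$-th summand into a cross term $\eps^{-1}[g_i(X_s^{\eps,\delta})-g_i(x_s)][\kappa_i(X_s^{\eps,\delta})-\kappa_i(x_s)]$ and a linearization-error term $\bigl(\eps^{-1}[g_i(X_s^{\eps,\delta})-g_i(x_s)]-Dg_i(x_s)Z_s^{\eps,\delta}\bigr)\kappa_i(x_s)$. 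This isolates, respectively, the mismatch between evaluating $\kappa_i$ at $X_s^{\eps,\delta}$ versus $x_s$ and the genuine second-order Taylor remainder of $g_i$ about $x_s$.

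For the cross term, Lipschitz continuity of $g_i$ gives $\eps^{-1}|g_i(X_s^{\eps,\delta})-g_i(x_s)|\le C|Z_s^{\eps,\delta}|$, while Lipschitz continuity of $\kappa_i$ gives $|\kappa_i(X_s^{\eps,\delta})-\kappa_i(x_s)|\le C|X_s^{\eps,\delta}-x_s|=C\eps|Z_s^{\eps,\delta}|$, so this term is $O(\eps|Z_s^{\eps,\delta}|^2)$. For the linearization-error term, a second-order Taylor expansion of $g_i$ together with the boundedness of the second partial derivatives of $g$ (Assumption \ref{A:Boundedness}) yields $|g_i(X_s^{\eps,\delta})-g_i(x_s)-Dg_i(x_s)(X_s^{\eps,\delta}-x_s)|\le C|X_s^{\eps,\delta}-x_s|^2=C\eps^2|Z_s^{\eps,\delta}|^2$; dividing by $\eps$ and using that $|\kappa_i(x_s)|\le C(1+|x_s|)$ is bounded on $[0,T]$ by Lemma \ref{L:Nonlinear-Sys-Sampling-Est}, this term is likewise $O(\eps|Z_s^{\eps,\delta}|^2)$. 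Hence the full integrand is dominated by $C\eps|Z_s^{\eps,\delta}|^2=C\eps^{-1}|X_s^{\eps,\delta}-x_s|^2$.

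It then remains to integrate this bound in $s$, take the supremum over $t\in[0,T]$ (harmless, since the bound on $\int_0^t$ is monotone in $t$), and take expectations, which gives $C\eps^{-1}\int_0^T\BE[\,|X_s^{\eps,\delta}-x_s|^2\,]\,ds\le C\eps^{-1}\,T\,\BE[\sup_{0\le t\le T}|X_t^{\eps,\delta}-x_t|^2]$. Invoking Theorem \ref{T:LLN-II} with $p=2$ bounds the last expectation by $(\eps^2+\delta^2)Ce^{CT}$, so the whole quantity is at most $\eps^{-1}(\eps^2+\delta^2)Ce^{CT}=(\eps+\delta^2/\eps)Ce^{CT}$. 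Finally, since we are in Regime $1$ or $2$, the bound $\delta<(\cc+1)\eps$ from \eqref{E:eps0} gives $\delta^2/\eps=\delta\cdot(\delta/\eps)\le(\cc+1)\delta\le\cc\delta+(\cc+1)\eps$, which collapses the estimate into the stated form $(\eps+\cc\delta)C_{\ref{P:Add-Term-Approx}}e^{C_{\ref{P:Add-Term-Approx}}T}$ after absorbing $\cc$-dependent factors into the constant. The argument is essentially routine; the only real care is in choosing the split so that the argument mismatch in $\kappa_i$ is cleanly separated from the second-order remainder of $g_i$, and in verifying that the $\eps^{-1}$ loss incurred by dividing the quadratic remainder is exactly recovered by the $L^2$ estimate of Theorem \ref{T:LLN-II}.
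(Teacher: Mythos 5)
Your proposal is correct and follows essentially the same route as the paper: your cross term and linearization-error term are exactly the paper's decomposition $[g(X_s^{\eps,\delta})-g(x_s)]\kappa(X_s^{\eps,\delta})=I_1+I_2$ (cross term handled by Lipschitz continuity of $g$ and $\kappa$, Taylor remainder of each $g_i$ handled via the bounded second partials from Assumption \ref{A:Boundedness} and linear growth of $\kappa$), concluded by Theorem \ref{T:LLN-II} with $p=2$ and the bound $\delta<(\cc+1)\eps$ from \eqref{E:eps0}. The only cosmetic difference is that you perform the split column-by-column from the start, whereas the paper splits first and expands columnwise only inside $I_2$; this changes nothing in the estimates.
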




\begin{proposition}\label{P:Sampling-Term-Approx}
Let $X_t^{\varepsilon,\delta}$ be the solution of {\sc sde} \eqref{E:sie}. Then, for any fixed $T>0$, there exists a positive constant $C_{\ref{P:Sampling-Term-Approx}}$ such that for any $0< \eps <\eps_0$, we have
\begin{equation*}
\BE\left[\sup_{0\le t \le T}{\int_0^t}\left|\frac{g(X_s^{\eps,\delta})\left[\kappa(X_s^{\eps,\delta})-\kappa(X_{\pi_\delta(s)}^{\eps,\delta})\right]}{\eps}-\gDk(x_s)\frac{X_s^{\varepsilon, \delta}-X_{\pi_\delta(s)}^{\varepsilon, \delta}}{\eps}\right|\thinspace ds\right]\le \{(\cc+1)\eps +\delta \cc\}C_{\ref{P:Sampling-Term-Approx}}e^{C_{\ref{P:Sampling-Term-Approx}}T}.
\end{equation*}
\end{proposition}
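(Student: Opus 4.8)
The plan is to reduce the integrand to a first-order Taylor remainder in $\kappa$ plus a Lipschitz increment of $\gDk$, and then to control both pieces using a short-interval second-moment estimate for $X^{\eps,\delta}$. First observe that the integrand is nonnegative, so $\sup_{0\le t\le T}\int_0^t(\cdot)\,ds=\int_0^T(\cdot)\,ds$; by Tonelli it therefore suffices to bound $\int_0^T\BE[\,\cdot\,]\,ds$, i.e.\ to estimate the expectation pointwise in $s$. Writing $\Delta_s\triangleq X_s^{\eps,\delta}-X_{\pi_\delta(s)}^{\eps,\delta}$ and Taylor-expanding $\kappa$ about $X_s^{\eps,\delta}$, the boundedness of $D^2\kappa$ (Assumption \ref{A:Boundedness}) gives $\kappa(X_s^{\eps,\delta})-\kappa(X_{\pi_\delta(s)}^{\eps,\delta})=D\kappa(X_s^{\eps,\delta})\Delta_s+\mathcal{R}_s$ with $|\mathcal{R}_s|\le C|\Delta_s|^2$. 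Multiplying by $g(X_s^{\eps,\delta})$ and subtracting $\gDk(x_s)\Delta_s$, the integrand equals
\[
\frac{1}{\eps}\bigl([\gDk(X_s^{\eps,\delta})-\gDk(x_s)]\Delta_s+g(X_s^{\eps,\delta})\mathcal{R}_s\bigr),
\]
so that $\eps$ times the integrand is bounded by $|\gDk(X_s^{\eps,\delta})-\gDk(x_s)|\,|\Delta_s|+|g(X_s^{\eps,\delta})|\,|\mathcal{R}_s|$.

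The crux is a second-moment estimate for the sampling increment, namely $\BE[|\Delta_s|^2]\le C(\delta^2+\eps^2\delta)$ uniformly in $s\in[0,T]$. I would obtain this from the integral form of \eqref{E:sie}: on $[\pi_\delta(s),s]$ (an interval of length at most $\delta$, on which $\pi_\delta(\cdot)$ is constant), split $\Delta_s$ into its drift part and its stochastic part. For the drift, Cauchy--Schwarz together with the linear growth of $f,g,\kappa$ and the uniform $L^2$ bound of Lemma \ref{L:L2-Est} give a contribution of order $\delta^2$; for the stochastic part, the It\^o isometry and the linear growth of $\sigma$ (again with Lemma \ref{L:L2-Est}) give a contribution of order $\eps^2\delta$. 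This increment estimate is the main obstacle, since it is precisely here that the two small scales $\delta$ (drift over a sampling window) and $\eps\sqrt{\delta}$ (noise over a sampling window) are separated out; everything downstream is routine once it is in hand.

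It then remains to assemble the two pieces. For the Lipschitz term, $\gDk=gD\kappa$ is globally Lipschitz by Assumption \ref{A:Boundedness}, so $|\gDk(X_s^{\eps,\delta})-\gDk(x_s)|\le C|X_s^{\eps,\delta}-x_s|$; Cauchy--Schwarz in the expectation, followed by Theorem \ref{T:LLN-II} with $p=2$ (giving $\sqrt{\BE|X_s^{\eps,\delta}-x_s|^2}\le C(\eps+\delta)e^{CT}$) and the increment estimate, yields after dividing by $\eps$ and integrating a bound of order $\eps^{-1}(\eps+\delta)(\delta+\eps\sqrt{\delta})$. For the remainder term, the boundedness of $g$ and the bound $|\mathcal{R}_s|\le C|\Delta_s|^2$ give, after dividing by $\eps$, a contribution of order $\eps^{-1}(\delta^2+\eps^2\delta)$. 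Finally I would invoke the regime inequality $\delta<(\cc+1)\eps$ from \eqref{E:eps0} to convert the surviving powers (for example $\delta^2/\eps\le(\cc+1)\delta$ and $\eps\sqrt{\delta}\le\eps$) into the stated form $\{(\cc+1)\eps+\cc\delta\}C_{\ref{P:Sampling-Term-Approx}}e^{C_{\ref{P:Sampling-Term-Approx}}T}$, absorbing all numerical factors and the $e^{CT}$ growth into the generic constant.
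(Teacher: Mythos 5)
Your proposal is correct, and its algebraic skeleton (Taylor expansion of $\kappa$ plus a Lipschitz difference of $\gDk$, then a second-moment bound on the sampling increment) parallels the paper's, but the key technical ingredient is genuinely different. The paper splits the integrand as $[g(X_s^{\eps,\delta})-g(x_s)][\kappa(X_s^{\eps,\delta})-\kappa(X_{\pi_\delta(s)}^{\eps,\delta})]$ plus $g(x_s)[\kappa(X_s^{\eps,\delta})-\kappa(X_{\pi_\delta(s)}^{\eps,\delta})]$, Taylor-expands the second piece about the \emph{sampled} point $X_{\pi_\delta(s)}^{\eps,\delta}$, and then controls everything through two auxiliary results (Lemmas \ref{L:Kappa-Derivative-Est} and \ref{L:Highr-Power-Der-lemma}) which never touch the {\sc sde} over a sampling window: they bound $|X_s^{\eps,\delta}-X_{\pi_\delta(s)}^{\eps,\delta}|$ by the triangle inequality through $x_s$ and $x_{\pi_\delta(s)}$, and then invoke the already-proven Theorem \ref{T:LLN-II} and the deterministic Lemma \ref{L:Sampling-Difference}, yielding $\BE|X_s^{\eps,\delta}-X_{\pi_\delta(s)}^{\eps,\delta}|^2\lesssim(\eps^2+\delta^2)e^{CT}$. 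You instead expand about the \emph{current} point $X_s^{\eps,\delta}$, group the linear part as a single Lipschitz increment of $\gDk$, and—this is the real divergence—derive a fresh short-window estimate $\BE|\Delta_s|^2\le C(\delta^2+\eps^2\delta)e^{CT}$ directly from \eqref{E:sie} via Cauchy--Schwarz on the drift and the It\^o isometry on the noise (using Lemma \ref{L:L2-Est} and the fact that $\pi_\delta$ is constant on the window). Your estimate is sharper, since it separates the drift scale $\delta$ from the noise scale $\eps\sqrt{\delta}$ over a window, and it makes the proposition self-contained; the paper's version is cruder but buys economy, since Lemmas \ref{L:Kappa-Derivative-Est} and \ref{L:Highr-Power-Der-lemma} are reused in the proofs of Lemmas \ref{L:l2Andl3_Estimates} and \ref{L:P1AndP3Estimates}, and it keeps every bound flowing through the single {\sc lln} pipeline. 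Both estimates suffice here because the final division by $\eps$ is absorbed using the regime inequality $\delta<(\cc+1)\eps$ of \eqref{E:eps0}, exactly as you do in your last step; your conversions $\delta^2/\eps\le(\cc+1)\delta\le\cc\delta+(\cc+1)\eps$ and $\eps\sqrt{\delta}\le\eps$ do recover the stated form of the bound.
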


\begin{proposition}\label{P:Noise-Term-Est}
Let $X_t^{\varepsilon,\delta}$ and $x_t$ solve \eqref{E:sie} and \eqref{E:nonlin-sys}, respectively. Then, for any fixed $T>0,$ there exists a positive constant $C_{\ref{P:Noise-Term-Est}}$ such that 
\begin{equation*}
\BE\left[ \sup_{0\le t \le T}\left|\int_0^t \{\sigma(X_s^{\eps,\delta})-\sigma(x_s)\}dW_s\right| \right] \le  (\eps +\delta)C_{\ref{P:Noise-Term-Est}}e^{C_{\ref{P:Noise-Term-Est}}T}.
\end{equation*}
\end{proposition}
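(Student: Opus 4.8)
The plan is to derive this estimate directly from Theorem \ref{T:LLN-II} (already established), combined with the global Lipschitz continuity of $\sigma$ and a moment bound for stochastic integrals. Since the quantity of interest is the expected supremum of a continuous martingale, the natural tool is the Burkholder--Davis--Gundy (BDG) inequality, which trades control of the martingale for control of its quadratic variation; the Lipschitz property then expresses that quadratic variation in terms of $|X_s^{\eps,\delta}-x_s|^2$, at which point Theorem \ref{T:LLN-II} supplies exactly the bound we need.

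First I would reduce the $\BR^n$-valued integral to scalar martingales. Writing the integral componentwise and recalling that $|\cdot|$ denotes the one-norm, as in the proof of Lemma \ref{L:L2-Est}, I bound $|\int_0^t \{\sigma(X_s^{\eps,\delta})-\sigma(x_s)\}\,dW_s|$ by a finite sum over $i,j$ of the scalar continuous martingales $\int_0^t \{\sigma_{ji}(X_s^{\eps,\delta})-\sigma_{ji}(x_s)\}\,dW_s^i$. Applying BDG \cite[Theorem 3.3.28]{KS91} with $p=1$ to each such term, followed by Jensen's inequality for the concave map $\sqrt{\cdot}$ (exactly as in the derivation of \eqref{E:L1-Est}), gives
\begin{equation*}
\BE\left[\sup_{0\le t\le T}\left|\int_0^t \{\sigma_{ji}(X_s^{\eps,\delta})-\sigma_{ji}(x_s)\}\,dW_s^i\right|\right]\le C\left(\int_0^T \BE\left[|\sigma_{ji}(X_s^{\eps,\delta})-\sigma_{ji}(x_s)|^2\right]ds\right)^{1/2}.
\end{equation*}
Using the Lipschitz continuity of $\sigma$ from Assumption \ref{A:Smooth-LinearGrow}, namely $|\sigma(X_s^{\eps,\delta})-\sigma(x_s)|^2\le C|X_s^{\eps,\delta}-x_s|^2$, the right-hand side is dominated by $C\left(\int_0^T \BE\left[\sup_{0\le r\le s}|X_r^{\eps,\delta}-x_r|^2\right]ds\right)^{1/2}$.

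Finally I would invoke Theorem \ref{T:LLN-II} with $p=2$, which gives $\BE[\sup_{0\le r\le s}|X_r^{\eps,\delta}-x_r|^2]\le(\eps^2+\delta^2)C e^{Cs}$. Integrating in $s$ and taking the square root yields a bound of order $\sqrt{\eps^2+\delta^2}\,e^{CT/2}\le(\eps+\delta)C e^{CT}$, where I use $\sqrt{\eps^2+\delta^2}\le\eps+\delta$ and relabel the constant in the exponent; summing over the finitely many indices $i,j$ absorbs the combinatorial factor into $C_{\ref{P:Noise-Term-Est}}$. The pleasant feature here is that, unlike the proof of Theorem \ref{T:LLN-II} itself, no Gronwall iteration is needed: the integrand does not involve the unknown on both sides, so the separation estimate can simply be imported. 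Consequently the only point requiring mild care is the bookkeeping in passing from the vector-valued integral to scalar martingales with the correct handling of the one-norm and the cross terms, precisely mirroring Lemma \ref{L:L2-Est}; beyond this, the statement is a routine consequence of BDG, Jensen, Lipschitz continuity, and Theorem \ref{T:LLN-II}.
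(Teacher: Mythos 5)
Your proposal is correct and follows essentially the same route as the paper's own proof: componentwise reduction of the vector-valued stochastic integral in the one-norm, Burkholder--Davis--Gundy followed by Jensen's inequality for the concave square root (mirroring the derivation of \eqref{E:L1-Est}), then Lipschitz continuity of $\sigma$ and Theorem \ref{T:LLN-II} with $p=2$ to conclude. No gaps; the observation that no Gronwall argument is needed is also consistent with the paper's treatment.
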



While the proofs of Propositions \ref{P:f-Taylor-Approx} through \ref{P:Noise-Term-Est} will be discussed in Section \ref{S:PropositionsProofs}, we pause to comment on the significance of these results. Proposition \ref{P:f-Taylor-Approx} enables us to estimate the error terms $\mathsf{R}^{\eps,\delta}_1(t)$, $\mathsf{R}^{\eps,\delta}_4(t)$ from equation \eqref{E:Remainder-Terms}. Propositions \ref{P:Add-Term-Approx} and \ref{P:Sampling-Term-Approx} allow us to bound $\mathsf{R}^{\eps,\delta}_3(t)$ and $\mathsf{R}^{\eps,\delta}_2(t)$, respectively. Finally, 
Proposition \ref{P:Noise-Term-Est} estimates $\sup_{0\le t \le T}\left|\int_0^t \{\sigma(X_s^{\eps,\delta})-\sigma(x_s)\}dW_s\right|$, effectively enabling us to replace $\int_0^t \sigma(X^{\eps,\delta}_s)\thinspace dW_s$ by $\int_0^t \sigma(x_s)\thinspace dW_s$ in \eqref{E:lim-fluct-R-1-2}.

We now prove our main result Theorem \ref{T:fluctuations-R-1-2}.
\begin{proof}[Proof of Theorem \ref{T:fluctuations-R-1-2}]
Recalling that $X^{\eps,\delta}_t - x_t - \eps Z_t = \eps (Z^{\eps,\delta}_t - Z_t )$, where $Z^{\eps,\delta}_t$ and $Z_t$ are given by \eqref{E: cent-limit-eq} and \eqref{E:lim-fluct-R-1-2}, respectively, some simple algebra yields
%
\begin{multline*}
\begin{aligned}
Z_t^{\eps,\delta}-Z_t &= \int_0^t \left\{ \frac{f(X_s^{\eps,\delta})-f(x_s)}{\eps}-Df(x_s)Z_s^{\eps,\delta}+Df(x_s)Z_s^{\eps,\delta}- Df(x_s)Z_s \right\}ds\\
&+ \int_0^t \left\{\frac{[g(X_s^{\eps,\delta})-g(x_s)]\kappa(X_s^{\eps,\delta})}{\eps}-\sum_{i=1}^{m} Dg_i(x_s)Z_s^{\eps,\delta} \kappa_i(x_s) \right.\\
& \qquad \qquad \qquad \qquad \qquad \qquad \qquad \quad \quad \left. +\sum_{i=1}^{m} Dg_i(x_s)Z_s^{\eps,\delta} \kappa_i(x_s) -\sum_{i=1}^{m} Dg_i(x_s)Z_s \kappa_i(x_s) \right\}ds
\end{aligned}
\end{multline*}

\begin{multline*}
\begin{aligned}
& \qquad  + \int_0^t \left\{ \frac{g(x_s)[\kappa(X_s^{\eps,\delta})-\kappa(x_s)]}{\eps}-\gDk(x_s)Z_s^{\eps,\delta} + \gDk(x_s)Z_s^{\eps,\delta} -\gDk(x_s)Z_s \right\}ds\\
& \qquad  +\int_0^t \left\{ \frac{g(X_s^{\eps,\delta})[\kappa(X_{\pi_\delta(s)}^{\eps,\delta})-\kappa(X_s^{\eps,\delta})]}{\eps}+ \gDk(x_s)\frac{X_s^{\eps,\delta}-X_{\pi_{\delta}(s)}^{\eps,\delta}}{\eps} \right\}ds\\
& \quad \quad + \int_0^t \left\{-\gDk(x_s)\frac{X_s^{\eps,\delta}-X_{\pi_{\delta}(s)}^{\eps,\delta}}{\eps} + \frac{\cc}{2} \gDk(x_s)[f(x_s)+g(x_s)\kappa(x_s)] \right\} ds + \int_0^t \Big\{\sigma(X_s^{\eps,\delta})-\sigma(x_s)\Big\} dW_s.
\end{aligned}
\end{multline*}

Now, using the triangle inequality, we get
\begin{multline*}
\begin{aligned}
|Z_t^{\eps,\delta}-Z_t| & \le \int_0^t \left| \frac{f(X_s^{\eps,\delta})-f(x_s)}{\eps}-Df(x_s)Z_s^{\eps,\delta} \right| ds + \int_0^t \left| Df(x_s) \right| \left| Z_s^{\eps,\delta}-Z_s \right|ds\\
&+ \int_0^t \left|\frac{[g(X_s^{\eps,\delta})-g(x_s)]\kappa(X_s^{\eps,\delta})}{\eps}-\sum_{i=1}^{m} Dg_i(x_s)Z_s^{\eps,\delta} \kappa_i(x_s) \right|ds\\
&  + \int_0^t \left |\sum_{i=1}^{m} Dg_i(x_s)(Z_s^{\eps,\delta}-Z_s) \kappa_i(x_s) \right|ds
\end{aligned}
\end{multline*}
\begin{multline*}
\begin{aligned}
& \qquad \qquad \quad + \int_0^t \left| \frac{g(x_s)[\kappa(X_s^{\eps,\delta})-\kappa(x_s)]}{\eps}-\gDk(x_s)Z_s^{\eps,\delta} \right|ds + \int_0^t \left| \gDk(x_s)(Z_s^{\eps,\delta} -Z_s) \right|ds \\
& \qquad \qquad \quad+\int_0^t \left| \frac{g(X_s^{\eps,\delta})[\kappa(X_{\pi_\delta(s)}^{\eps,\delta})-\kappa(X_s^{\eps,\delta})]}{\eps}+ \gDk(x_s)\frac{X_s^{\eps,\delta}-X_{\pi_{\delta}(s)}^{\eps,\delta}}{\eps} \right|ds \\
& \qquad \qquad \quad \thinspace + \left|\int_0^t \gDk(x_s)\frac{X_s^{\varepsilon, \delta}-X_{\pi_\delta(s)}^{\varepsilon, \delta}}{\eps}\thinspace ds - \ell(t)\right| +\left|\int_0^t \Big\{\sigma(X_s^{\eps,\delta})-\sigma(x_s)\Big\} dW_s \right|,
\end{aligned}
\end{multline*}

where $\ell(t)$ is defined in \eqref{E:ell_1-and-ell_2}. We now obtain 
\begin{multline*}
\begin{aligned}
& \sup_{0\le s \le T}|Z_s^{\eps,\delta}-Z_s|  \le \\
& \qquad \quad \sup_{0 \le t \le T}\int_0^t \left| \frac{f(X_s^{\eps,\delta})-f(x_s)}{\eps}-Df(x_s)Z_s^{\eps,\delta} \right| ds\\ 
& \qquad \thinspace + \sup_{0 \le t \le T}\int_0^t \left|\frac{[g(X_s^{\eps,\delta})-g(x_s)]\kappa(X_s^{\eps,\delta})}{\eps}-\sum_{i=1}^{m} Dg_i(x_s)Z_s^{\eps,\delta} \kappa_i(x_s) \right|ds
\end{aligned}
\end{multline*}
\begin{multline*}
\begin{aligned}
& \quad  +\int_0^T \sum_{i=1}^{m}|Dg_i(x_s)|\sup_{0\le r \le s}|Z_r^{\eps,\delta}-Z_r||\kappa_i(x_s)|\thinspace ds
 + \sup_{0\le t \le T}\int_0^t \left| \frac{g(x_s)[\kappa(X_s^{\eps,\delta})-\kappa(x_s)]}{\eps}-\gDk(x_s)Z_s^{\eps,\delta} \right|ds
\end{aligned}
\end{multline*}
\begin{multline*}
\begin{aligned}
& \qquad \quad   +\sup_{0\le t \le T}\int_0^t \left| \frac{g(X_s^{\eps,\delta})[\kappa(X_{\pi_\delta(s)}^{\eps,\delta})-\kappa(X_s^{\eps,\delta})]}{\eps}+ \gDk(x_s)\frac{X_s^{\eps,\delta}-X_{\pi_{\delta}(s)}^{\eps,\delta}}{\eps} \right|ds\\ 
& \qquad \quad + \sup_{0\le s \le T}\left|\int_0^s \gDk(x_r)\frac{X_r^{\varepsilon, \delta}-X_{\pi_\delta(r)}^{\varepsilon, \delta}}{\eps}\thinspace dr - 
\ell(s)\right|
  + \int_0^T \left\{|Df(x_s)|+|\gDk(x_s)|\right\}\sup_{0\le r \le s}|Z_r^{\eps,\delta}-Z_r|\thinspace ds\\
  & \qquad \qquad \qquad \qquad \qquad \qquad \qquad \qquad \qquad \qquad \qquad \qquad \qquad+ \sup_{0\le t \le T}\left|\int_0^t \{\sigma(X_s^{\eps,\delta})-\sigma(x_s)\} dW_s \right|.
\end{aligned}
\end{multline*}

Next, taking expectation on both sides, and then using Propositions \ref{P:MainTermApprox} through \ref{P:Noise-Term-Est}, followed by Gronwall's inequality, we get the required result.
\end{proof}


\subsection{Proof of Proposition \ref{P:MainTermApprox}}\label{S:MainTermApp}
This section is devoted to the proof of Proposition \ref{P:MainTermApprox}. Our plan proceeds as follows. 
First, in Lemma \ref{L:M_estimates}, we write the term $(1/\eps)\int_0^t \gDk(x_s)[{X_s^{\varepsilon, \delta}-X_{\pi_\delta(s)}^{\varepsilon, \delta}}]\thinspace ds$ appearing in Proposition \ref{P:MainTermApprox} as the sum $\sum_{i=1}^{4}{\sf M}_i^{\eps,\delta}(t)$. 
Next, we carefully analyze each of the terms ${\sf M}_i^{\eps,\delta}(t)$, $1 \le i \le 4$, to identify which terms contribute to the limit $\ell(t)$ and which terms are asymptotically negligible. 
To aid in this process, we frequently decompose terms into sums of simpler terms, as summarized in Figure \ref{F:Decomposition}. 
A high-level summary of this development is as follows. 
Lemmas \ref{L:l_estimates} through \ref{L:l2Andl3_Estimates} deal with ${\sf M}_1^{\eps,\delta}(t)$, with Lemma \ref{L:ell_1Approx} identifying the limiting contribution $\ell_1(t)$ (see equation \eqref{E:ell_1-and-ell_2}) of the quantity ${\sf M}_1^{\eps,\delta}(t)$.
Lemmas \ref{L:L_1_Estimate} to  \ref{L:P1AndP3Estimates} describe the asymptotic behavior of ${\sf M}_2^{\eps,\delta}(t)$, with Lemma \ref{L:ell2Approximation} identifying its contribution $\ell_2(t)$ (again, see equation \eqref{E:ell_1-and-ell_2}).
Bounds for ${\sf M}_3^{\eps,\delta}(t)$ and ${\sf M}_4^{\eps,\delta}(t)$ are provided in Lemmas \ref{L:M3_Estimate} and \ref{L:M4-Noise-Term-Small}, respectively. 
The proofs of all these lemmas are deferred to Section \ref{S:CLT-Lemmas}. We close out this section with the proof of Proposition \ref{P:MainTermApprox} obtained by putting various pieces together.

\begin{figure}[h!]
\begin{center}
\includegraphics[height=7cm,width=14cm]{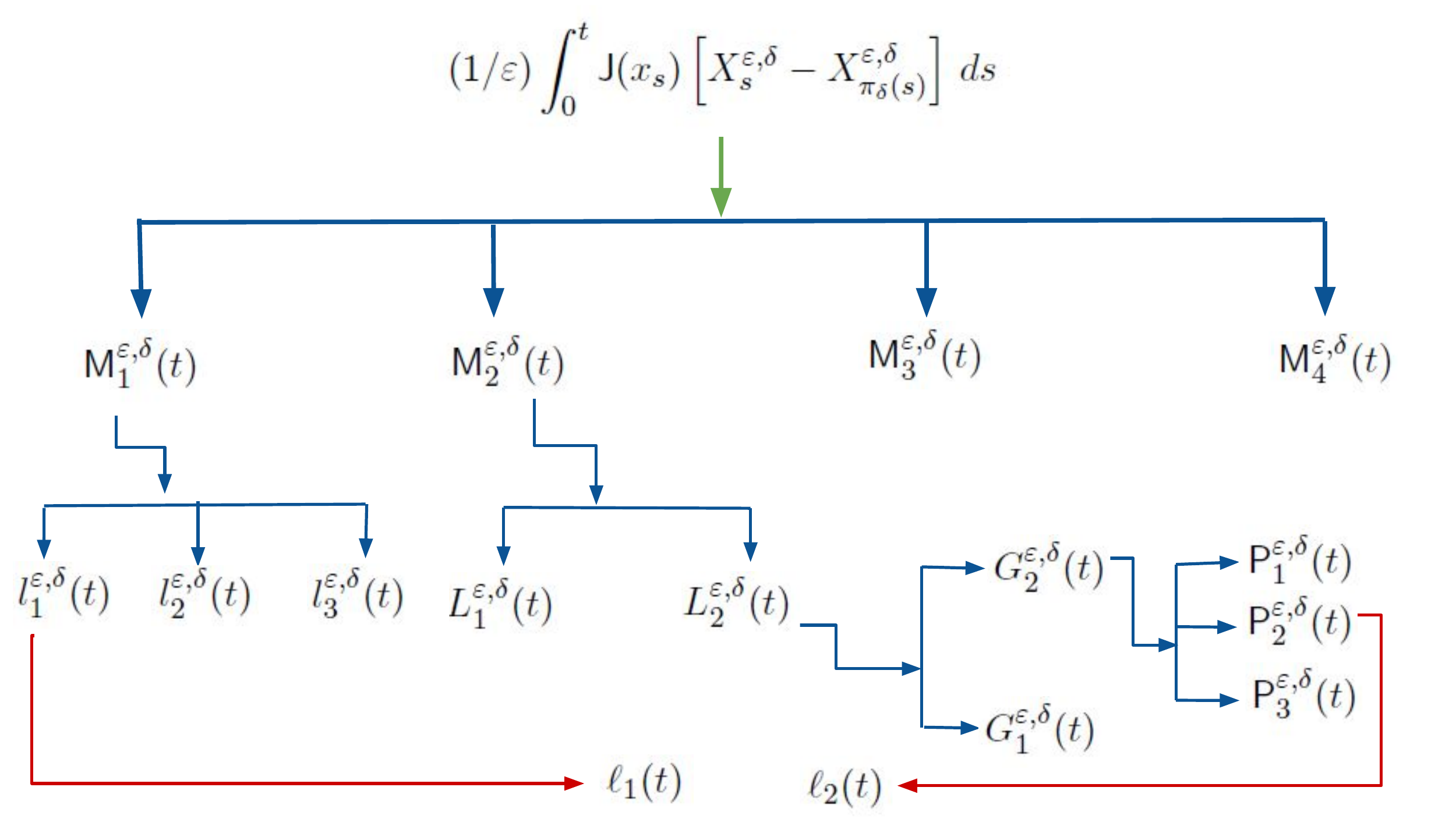}
\caption{The term $(1/\eps)\int_0^t \gDk(x_s)[{X_s^{\varepsilon, \delta}-X_{\pi_\delta(s)}^{\varepsilon, \delta}}]\thinspace ds$ is first decomposed as a sum of ${\sf M}_i^{\eps,\delta}(t)$, $1 \le i \le 4$. Some of these ${\sf M}_i^{\eps,\delta}(t)$ are further split into simpler terms (sometimes in multiple steps) as shown in the branching tree-like diagram. The only terms that survive in the limit as $\eps \searrow 0$ are $l_1^{\eps,\delta}(t)$ and $\mathsf{P}_2^{\eps,\delta}(t)$; these terms converge to $\ell_1(t)$ and $\ell_2(t)$, respectively. All the remaining terms are small and thus asymptotically negligible.} \label{F:Decomposition}
\end{center}
\end{figure}



\begin{lemma}\label{L:M_estimates}
Let $X_t^{\eps,\delta}$ be the strong solution of \eqref{E:sie}. Then, for any $\eps,\delta \in (0,1)$ and $t \in [0,T]$, we have
\begin{equation}\label{E:M1234}
\begin{aligned}
& (1/\eps)\int_0^t \gDk(x_s) \left[{X_s^{\varepsilon, \delta}-X_{\pi_\delta(s)}^{\varepsilon, \delta}}\right]\thinspace ds = \sum_{i=1}^{4} {\sfM}_i^{\eps,\delta}(t), \quad \text{where}\\
{\sf M}_1^{\eps,\delta}(t) & \triangleq (1/\eps)\int_0^t \gDk(x_s) f(x_{\pi_\delta(s)})[{s-\pi_\delta(s)}] ds, \quad
{\sf M}_2^{\eps,\delta}(t) \triangleq (1/\eps)\int_0^t \gDk(x_s)\int_{\pi_\delta(s)}^{s}{g(X_r^{\eps,\delta})\kappa(X_{\pi_\delta(r)}^{\eps,\delta})} dr \thinspace ds,\\
{\sf M}_3^{\eps,\delta}(t) & \triangleq \frac{1}{\eps}\int_0^t \gDk(x_s)\int_{\pi_\delta(s)}^{s}\{{f(X_r^{\eps,\delta})-f(X_{\pi_\delta(r)}^{\eps,\delta})}\} dr ds
+ \frac{1}{\eps}\int_0^t \gDk(x_s) [{f(X_{\pi_\delta(s)}^{\varepsilon, \delta})-f(x_{\pi_\delta(s)})}][s-\pi_\delta(s)]ds,\\
{\sf M}_4^{\eps,\delta}(t) & \triangleq \int_0^t \gDk(x_s) \int_{\pi_\delta(s)}^{s}\sigma(X_u^{\eps,\delta})dW_u \thinspace ds.
\end{aligned}
\end{equation}
\end{lemma}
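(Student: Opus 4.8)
The plan is to treat this as an exact algebraic identity rather than an estimate: I will substitute the integral form of the {\sc sde} \eqref{E:sie} for the increment $X_s^{\eps,\delta}-X_{\pi_\delta(s)}^{\eps,\delta}$ and then regroup so that the four summands $\mathsf{M}_i^{\eps,\delta}(t)$ emerge. Concretely, writing \eqref{E:sie} between the times $\pi_\delta(s)$ and $s$ gives
\begin{equation*}
X_s^{\eps,\delta}-X_{\pi_\delta(s)}^{\eps,\delta} = \int_{\pi_\delta(s)}^{s} f(X_r^{\eps,\delta})\,dr + \int_{\pi_\delta(s)}^{s} g(X_r^{\eps,\delta})\kappa(X_{\pi_\delta(r)}^{\eps,\delta})\,dr + \eps\int_{\pi_\delta(s)}^{s}\sigma(X_u^{\eps,\delta})\,dW_u.
\end{equation*}
Inserting this into $(1/\eps)\int_0^t \gDk(x_s)[\,\cdot\,]\,ds$ splits the left-hand side into three contributions, one for each term above.

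Two of these three contributions are already in final form. The middle drift term, carrying $g(X_r^{\eps,\delta})\kappa(X_{\pi_\delta(r)}^{\eps,\delta})$, is exactly $\mathsf{M}_2^{\eps,\delta}(t)$. The stochastic contribution carries a prefactor $\eps$ which cancels the outer $1/\eps$, leaving exactly $\mathsf{M}_4^{\eps,\delta}(t)$. The only real work is in the first drift term, namely $(1/\eps)\int_0^t \gDk(x_s)\int_{\pi_\delta(s)}^{s} f(X_r^{\eps,\delta})\,dr\,ds$.

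To treat that term I would use the key observation that $\pi_\delta(r)=\pi_\delta(s)$ for every $r\in[\pi_\delta(s),s]$, since $s$ and all such $r$ lie in the same sampling interval $[\pi_\delta(s),\pi_\delta(s)+\delta)$. This legitimizes the telescoping decomposition
\begin{equation*}
f(X_r^{\eps,\delta}) = f(x_{\pi_\delta(s)}) + \bigl[f(X_{\pi_\delta(s)}^{\eps,\delta})-f(x_{\pi_\delta(s)})\bigr] + \bigl[f(X_r^{\eps,\delta})-f(X_{\pi_\delta(r)}^{\eps,\delta})\bigr],
\end{equation*}
in which the first two summands are constant in $r$ and integrate over $[\pi_\delta(s),s]$ to $f(x_{\pi_\delta(s)})[s-\pi_\delta(s)]$ and $[f(X_{\pi_\delta(s)}^{\eps,\delta})-f(x_{\pi_\delta(s)})][s-\pi_\delta(s)]$, respectively. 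The first of these reproduces $\mathsf{M}_1^{\eps,\delta}(t)$, while the second together with the $ds\,dr$-integral of the third summand assembles precisely into the two-term expression defining $\mathsf{M}_3^{\eps,\delta}(t)$. Collecting all the pieces yields the claimed identity.

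Since the statement is an exact identity holding for every fixed $\eps,\delta\in(0,1)$ and $t\in[0,T]$, no probabilistic or analytic estimates are needed at this stage; the entire content is careful bookkeeping. The one place where a sign or index slip could occur—and thus the only point demanding attention—is the constancy of the floor map on the sampling interval, which is what permits $f(X_{\pi_\delta(r)}^{\eps,\delta})$ appearing inside $\mathsf{M}_3^{\eps,\delta}(t)$ to be identified with the $r$-constant value $f(X_{\pi_\delta(s)}^{\eps,\delta})$. This decomposition is purely structural and sets the stage for the subsequent lemmas, where the genuine estimates—isolating the surviving contributions $\ell_1(t)$ and $\ell_2(t)$ and discarding the asymptotically negligible remainders—are carried out.
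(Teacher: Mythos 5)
Your proposal is correct and follows essentially the same route as the paper: substitute the integral form of \eqref{E:sie} for the increment $X_s^{\eps,\delta}-X_{\pi_\delta(s)}^{\eps,\delta}$, note the $\eps$-cancellation in the stochastic term, and regroup the $f$-contribution by adding and subtracting $f(X_{\pi_\delta(r)}^{\eps,\delta})$ and $f(x_{\pi_\delta(s)})$. Your explicit remark that $\pi_\delta(r)=\pi_\delta(s)$ for $r\in[\pi_\delta(s),s]$ is exactly the fact the paper uses implicitly to pull the $r$-constant terms out of the inner integral, so nothing is missing.
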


Next, in Lemma \ref{L:l_estimates}, we write ${\sf M}_1^{\eps,\delta}(t)$ as a sum of terms $l_j^{\eps,\delta}(t)$, $1 \le j \le 3$. Lemma \ref{L:ell_1Approx} estimates the error between $l_1^{\eps,\delta}(t)$ and $\ell_1(t)$ (see equation \eqref{E:ell_1-and-ell_2}), showing convergence of the former to the latter. Lemma \ref{L:l2Andl3_Estimates} estimates $|l_2^{\eps,\delta}(t)|+|l_3^{\eps,\delta}(t)|$, assuring us that this term converges to zero as $\eps \searrow 0$. 
\begin{lemma}\label{L:l_estimates}
For any $\eps,\delta \in (0,1)$ and $t\ge 0$, we have
${\sf M}_1^{\eps,\delta}(t) = \sum_{i=1}^{3}  l_{i}^{\eps,\delta}(t), \quad \text{where}$
\begin{equation*}
\begin{aligned}
 l_{1}^{\eps,\delta}(t) & \triangleq \frac{1}{\eps}\int_0^t \gDk(x_{\pi_\delta(s)})f(x_{\pi_\delta(s)})[{s-\pi_\delta(s)}]\thinspace ds,\\ 
{l_{2}^{\eps,\delta}}(t) & \triangleq \frac{1}{\eps}\int_0^t g(x_s)\left\{D\kappa(x_s)-D\kappa(x_{\pi_\delta(s)})\right\}f(x_{\pi_\delta(s)})[{s-\pi_\delta(s)}]\thinspace ds,\\
 l_{3}^{\eps,\delta}(t) & \triangleq \frac{1}{\eps}\int_0^t \left\{g(x_s)-g(x_{\pi_\delta(s)})\right\}D\kappa(x_{\pi_\delta(s)})
f(x_{\pi_\delta(s)})
[{s-\pi_\delta(s)}]\thinspace ds. 
\end{aligned}
\end{equation*}
\end{lemma}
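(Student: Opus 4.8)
The plan is to prove the claimed decomposition by a purely algebraic add-and-subtract argument applied to the integrand of ${\sf M}_1^{\eps,\delta}(t)$. Recall from Lemma \ref{L:M_estimates} that ${\sf M}_1^{\eps,\delta}(t) = (1/\eps)\int_0^t \gDk(x_s) f(x_{\pi_\delta(s)})[s-\pi_\delta(s)]\thinspace ds$, and that $\gDk(x) = g(x)D\kappa(x)$ by definition. Each of $l_1^{\eps,\delta}(t), l_2^{\eps,\delta}(t), l_3^{\eps,\delta}(t)$ carries the same scalar weight $[s-\pi_\delta(s)]/\eps$ and the same right factor $f(x_{\pi_\delta(s)})$, so the whole statement reduces to a pointwise matrix identity in $s$ for the product $g(x_s)D\kappa(x_s)$; multiplying that identity on the right by $f(x_{\pi_\delta(s)})$, weighting by $[s-\pi_\delta(s)]/\eps$, integrating over $[0,t]$, and splitting the integral into three pieces then yields $l_1^{\eps,\delta}+l_2^{\eps,\delta}+l_3^{\eps,\delta}$.

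To establish the pointwise identity, I would abbreviate $a \triangleq g(x_s)$, $a' \triangleq g(x_{\pi_\delta(s)})$, $b \triangleq D\kappa(x_s)$, $b' \triangleq D\kappa(x_{\pi_\delta(s)})$, and write $ab = a'b' + a(b-b') + (a-a')b'$. Expanding the right-hand side gives $a'b' + (ab - ab') + (ab' - a'b')$, in which the mixed terms cancel in pairs to leave $ab$, exactly the integrand of ${\sf M}_1^{\eps,\delta}(t)$. The one point demanding care is that $g \in \BR^{n\times m}$ and $D\kappa \in \BR^{m\times n}$ do not commute, so the factorization must respect the left-to-right ordering throughout: the middle term $l_2^{\eps,\delta}(t)$ keeps $g(x_s)$ as the left factor with the increment $D\kappa(x_s)-D\kappa(x_{\pi_\delta(s)})$ in the middle, while the last term $l_3^{\eps,\delta}(t)$ keeps the increment $g(x_s)-g(x_{\pi_\delta(s)})$ on the left and $D\kappa(x_{\pi_\delta(s)})$ immediately to its right. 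With this ordering respected, the cancellation is immediate.

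There is no real analytic obstacle here; this is a bookkeeping lemma whose sole purpose is to isolate the \emph{main} piece $l_1^{\eps,\delta}(t)$---in which both $g$ and $D\kappa$ are evaluated at the sampled point $x_{\pi_\delta(s)}$, so that it matches the structure of the target limit $\ell_1(t)$ in \eqref{E:ell_1-and-ell_2}---from the two \emph{error} pieces $l_2^{\eps,\delta}(t), l_3^{\eps,\delta}(t)$, which are built from the temporal increments $D\kappa(x_s)-D\kappa(x_{\pi_\delta(s)})$ and $g(x_s)-g(x_{\pi_\delta(s)})$ and are therefore expected to be small. The substantive work of showing $l_1^{\eps,\delta}(t) \to \ell_1(t)$ and $l_2^{\eps,\delta}(t), l_3^{\eps,\delta}(t) \to 0$ is carried out separately in Lemmas \ref{L:ell_1Approx} and \ref{L:l2Andl3_Estimates}; the present lemma merely lays the algebraic scaffolding on which those estimates rest.
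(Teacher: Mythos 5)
Your proof is correct and follows essentially the same route as the paper: the identity $g(x_s)D\kappa(x_s) = \gDk(x_{\pi_\delta(s)}) + g(x_s)\{D\kappa(x_s)-D\kappa(x_{\pi_\delta(s)})\} + \{g(x_s)-g(x_{\pi_\delta(s)})\}D\kappa(x_{\pi_\delta(s)})$, applied pointwise under the integral and then split into the three pieces. Your explicit attention to the non-commutativity of $g$ and $D\kappa$ and to the left-to-right ordering is a sound precaution, fully consistent with the paper's argument.
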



\begin{lemma}\label{L:ell_1Approx}
Let $\ell_1(t)\triangleq \frac{1}{2}\thinspace \cc \int_0^t \gDk(x_s)f(x_s)\thinspace ds$ and let $l_1^{\eps,\delta}(t)$ be as defined in Lemma \ref{L:l_estimates}. 
Then, for any fixed $T>0$, there exists a positive constant $C_{\ref{L:ell_1Approx}}$ such that for any $0< \eps < \eps_0$,
\begin{equation*}
\sup_{0\le t \le T}|l_1^{\eps,\delta}(t)-\ell_1(t)|\le \left[ \delta(\cc+1)+\varkappa(\eps)\right]C_{\ref{L:ell_1Approx}}e^{C_{\ref{L:ell_1Approx}}T},
\end{equation*}
where $\kap(\eps)$ is defined in \eqref{E:kappa} and satisfies $\lim_{\eps \searrow 0}\kap(\eps)=0$. 
\end{lemma}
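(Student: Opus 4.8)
The plan is to exploit the piecewise-constant structure of the integrand in $l_1^{\eps,\delta}(t)$ together with the elementary fact that, over each full sampling interval, the sawtooth $s-\pi_\delta(s)$ has average value $\delta/2$; this is precisely the mechanism that generates the factor $\cc/2$ in $\ell_1$. I would set $\Phi(s)\triangleq \gDk(x_{\pi_\delta(s)})f(x_{\pi_\delta(s)})$, which is constant on each interval $[\delta k,\delta(k+1))$, and split $s-\pi_\delta(s)=\bigl([s-\pi_\delta(s)]-\tfrac{\delta}{2}\bigr)+\tfrac{\delta}{2}$. The crucial observation is that on any complete interval
\[
\int_{\delta k}^{\delta(k+1)}\left([s-\pi_\delta(s)]-\tfrac{\delta}{2}\right)ds=\int_0^\delta\left(u-\tfrac{\delta}{2}\right)du=0,
\]
so, since $\Phi$ is constant there, the centered integral $\tfrac{1}{\eps}\int_0^t\Phi(s)\bigl([s-\pi_\delta(s)]-\tfrac{\delta}{2}\bigr)\,ds$ receives no contribution from complete intervals. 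Only the final, possibly incomplete, interval $[\pi_\delta(t),t]$ survives, and its contribution is bounded by $\tfrac{1}{\eps}\sup_{0\le s\le T}|\Phi(s)|\cdot\tfrac{\delta}{2}\cdot\delta$. Using boundedness of $\gDk$ (Assumption \ref{A:Boundedness}) and of $x_s$ on $[0,T]$ (Lemma \ref{L:Nonlinear-Sys-Sampling-Est}) to control $|\Phi|$, together with $\delta<(\cc+1)\eps$ from \eqref{E:eps0}, this error is $O(\delta(\cc+1))$ uniformly in $t$. I would thus arrive at
\[
l_1^{\eps,\delta}(t)=\frac{\delta}{2\eps}\int_0^t\gDk(x_{\pi_\delta(s)})f(x_{\pi_\delta(s)})\,ds+E_1(t),\qquad \sup_{0\le t\le T}|E_1(t)|\le \delta(\cc+1)\,Ce^{CT}.
\]

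Next I would pass from this expression to $\ell_1(t)=\tfrac{\cc}{2}\int_0^t\gDk(x_s)f(x_s)\,ds$ by treating two discrepancies separately. First, replacing $\delta/\eps$ by $\cc$ incurs an error controlled by $\kap(\eps)=|\delta/\eps-\cc|$ times $\tfrac{1}{2}\int_0^t|\gDk(x_{\pi_\delta(s)})f(x_{\pi_\delta(s)})|\,ds$, which is $O(\kap(\eps))$ by the same boundedness used above. Second, replacing $x_{\pi_\delta(s)}$ by $x_s$ in the integrand costs $\tfrac{\cc}{2}\int_0^t|\gDk(x_{\pi_\delta(s)})f(x_{\pi_\delta(s)})-\gDk(x_s)f(x_s)|\,ds$; here I would invoke the Lipschitz continuity of the map $x\mapsto\gDk(x)f(x)$ (a product of bounded, Lipschitz factors on the bounded range of $x_\cdot$, via Assumptions \ref{A:Smooth-LinearGrow} and \ref{A:Boundedness}) and then Lemma \ref{L:Sampling-Difference}, which gives $|x_{\pi_\delta(s)}-x_s|\le\delta Ce^{CT}$. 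This bounds the second discrepancy by $O(\cc\delta)$. Since $\cc\delta\le(\cc+1)\delta$, all error terms fold into $[\delta(\cc+1)+\kap(\eps)]Ce^{CT}$, and taking $\sup_{0\le t\le T}$ yields the stated bound.

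The only genuinely delicate point is the first step: recognizing that the centered sawtooth integrates to zero over each complete sampling period, so that the leading behavior of $l_1^{\eps,\delta}$ is exactly $\tfrac{\delta}{2\eps}\int_0^t\Phi\,ds$ rather than, say, $\tfrac{\delta}{\eps}\int_0^t\Phi\,ds$. This is what produces the one-half in the effective drift, and everything downstream is routine bookkeeping with Lipschitz estimates and the sampling bound from Lemma \ref{L:Sampling-Difference}. One need only take mild care that the final incomplete interval is handled uniformly in $t$, which is immediate since its length never exceeds $\delta$.
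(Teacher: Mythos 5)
Your proof is correct and follows essentially the same route as the paper's: the exact evaluation $\int_{i\delta}^{(i+1)\delta}[s-\pi_\delta(s)]\,ds=\delta^2/2$ on complete intervals (your centered-sawtooth identity is the same computation), the add-and-subtract of $\delta/\eps$ versus $\cc$ producing the $\kap(\eps)$ term, the incomplete final interval contributing $O(\delta^2/\eps)\le O(\delta(\cc+1))$, and the Riemann-sum-to-integral error controlled via Lipschitz continuity of $x\mapsto\gDk(x)f(x)$ together with Lemmas \ref{L:Nonlinear-Sys-Sampling-Est} and \ref{L:Sampling-Difference}. The only difference is organizational: the paper outsources that last estimate to the auxiliary Lemma \ref{L:Remainder-Term-Simplification}, whereas you prove it inline.
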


\begin{lemma}\label{L:l2Andl3_Estimates}
For any fixed $T > 0$, there exists a positive constant $C_{\ref{L:l2Andl3_Estimates}}$ such that for any $0<\eps < \eps_0,$ 
\begin{equation*}
\begin{aligned}
\sup_{0\le t \le T}\left(|l_2^{\eps,\delta}(t)|+|l_3^{\eps,\delta}(t)|\right) &\le \delta(\cc+1)C_{\ref{L:l2Andl3_Estimates}}e^{C_{\ref{L:l2Andl3_Estimates}}T}.
\end{aligned} 
\end{equation*}
\end{lemma}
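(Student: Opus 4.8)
The plan is to bound each of $|l_2^{\eps,\delta}(t)|$ and $|l_3^{\eps,\delta}(t)|$ directly from the integral representations in Lemma \ref{L:l_estimates}, exploiting the fact that each integrand carries \emph{two} independent factors that are each of order $\delta$. Both expressions contain the factor $[s-\pi_\delta(s)]$, which satisfies $0 \le s-\pi_\delta(s) < \delta$ by the definition of $\pi_\delta$, together with a difference of the form $D\kappa(x_s)-D\kappa(x_{\pi_\delta(s)})$ (for $l_2$) or $g(x_s)-g(x_{\pi_\delta(s)})$ (for $l_3$) that will be controlled via Lipschitz continuity and the sampling estimate of Lemma \ref{L:Sampling-Difference}. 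The product of these two order-$\delta$ factors produces an overall $\delta^2$, which, after dividing by $\eps$, is converted to the desired $\delta(\cc+1)$ order using the bound $\delta/\eps < \cc+1$ from \eqref{E:eps0}.

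For $l_2^{\eps,\delta}(t)$, I would first pass the absolute value inside the integral and use submultiplicativity of the induced matrix norms to write
\[
|l_2^{\eps,\delta}(t)| \le \frac{1}{\eps}\int_0^t |g(x_s)|\,\left|D\kappa(x_s)-D\kappa(x_{\pi_\delta(s)})\right|\,|f(x_{\pi_\delta(s)})|\,[s-\pi_\delta(s)]\,ds.
\]
Here $g$ is bounded by Assumption \ref{A:Boundedness}; the quantity $|f(x_{\pi_\delta(s)})|$ is bounded on $[0,T]$ by the linear growth of $f$ (Assumption \ref{A:Smooth-LinearGrow}) together with the uniform bound on $\sup_{0\le t\le T}|x_t|$ from Lemma \ref{L:Nonlinear-Sys-Sampling-Est}; and $D\kappa$ is Lipschitz, since the second-order partial derivatives of $\kappa$ are bounded (Assumption \ref{A:Boundedness}), so that $|D\kappa(x_s)-D\kappa(x_{\pi_\delta(s)})| \le C|x_s-x_{\pi_\delta(s)}| \le \delta C e^{CT}$ by Lemma \ref{L:Sampling-Difference}. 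Combining these bounds with $[s-\pi_\delta(s)] < \delta$ yields $|l_2^{\eps,\delta}(t)| \le \eps^{-1}T\,\delta^2\,Ce^{CT}$, uniformly in $t\in[0,T]$.

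The estimate for $l_3^{\eps,\delta}(t)$ is entirely analogous: now $g$ supplies the difference $|g(x_s)-g(x_{\pi_\delta(s)})| \le C|x_s-x_{\pi_\delta(s)}| \le \delta C e^{CT}$ through its own Lipschitz continuity (Assumption \ref{A:Smooth-LinearGrow}), while $D\kappa(x_{\pi_\delta(s)})$ is bounded (Assumption \ref{A:Boundedness}) and $f(x_{\pi_\delta(s)})$ is bounded as before, giving the same $\eps^{-1}T\delta^2 Ce^{CT}$ bound. Adding the two estimates and invoking \eqref{E:eps0} to replace one factor $\delta/\eps$ by $(\cc+1)$ produces $\sup_{0\le t\le T}\left(|l_2^{\eps,\delta}(t)|+|l_3^{\eps,\delta}(t)|\right) \le \delta(\cc+1)\,C e^{CT}$ after absorbing the polynomial factor $T$ into the exponential. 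There is no genuine obstacle here; the only point requiring care is verifying that every factor \emph{other} than the two designated order-$\delta$ quantities is uniformly bounded on $[0,T]$ (in particular that $D\kappa$ is Lipschitz, not merely bounded, for the $l_2$ bound), which is precisely what Assumptions \ref{A:Smooth-LinearGrow}, \ref{A:Boundedness} and Lemma \ref{L:Nonlinear-Sys-Sampling-Est} guarantee.
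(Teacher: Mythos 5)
Your proposal is correct and follows essentially the same route as the paper: bound $s-\pi_\delta(s)<\delta$, control $|g(x_s)|$ and $|f(x_{\pi_\delta(s)})|$ via Assumptions \ref{A:Smooth-LinearGrow}, \ref{A:Boundedness} and Lemma \ref{L:Nonlinear-Sys-Sampling-Est}, extract the second $\delta$ from the sampled differences via Lemma \ref{L:Sampling-Difference}, and convert $\delta/\eps$ to $(\cc+1)$ using \eqref{E:eps0}. The only cosmetic difference is that where you inline the Lipschitz estimate for $D\kappa$ (from boundedness of the second-order partials of $\kappa$), the paper cites the second line of equation \eqref{E:Kappa-Derivative-Est} in Lemma \ref{L:Kappa-Derivative-Est}, whose proof is exactly your argument.
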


Our next series of lemmas comprising Lemmas \ref{L:L_1_Estimate} through \ref{L:P1AndP3Estimates} estimate the term ${\sf M}_{2}^{\eps,\delta}(t)$. Following the path charted in Figure \ref{F:Decomposition}, we start by writing 
\begin{equation}\label{E:L_1AndL_2Estimates}
\begin{aligned}
\sfM_2^{\eps,\delta}(t) &= L_1^{\eps,\delta}(t)+ L_2^{\eps,\delta}(t), \qquad \text{where}\\
L_1^{\eps,\delta}(t)&\triangleq\int_0^t \gDk(x_s)\int_{\pi_\delta(s)}^{s} \frac{g(X_r^{\eps,\delta})-g(x_{\pi_\delta(r)})}{\eps}\kappa(X_{\pi_\delta(r)}^{\eps,\delta}) \thinspace dr \thinspace ds,\\
L_2^{\eps,\delta}(t)&\triangleq\int_0^t \gDk(x_s)\int_{\pi_\delta(s)}^{s}\frac {g(x_{\pi_\delta(r)})}{\eps}\kappa(X_{\pi_\delta(r)}^{\eps,\delta}) \thinspace dr \thinspace ds.
\end{aligned}
\end{equation}
The term $L_1^{\eps,\delta}(t)$ is estimated and shown to be small in Lemma \ref{L:L_1_Estimate}, whereas to deal with $L_2^{\eps,\delta}(t)$, we further decompose it as follows:
\begin{equation}\label{E:G1-Rep}
\begin{aligned}
L_2^{\eps,\delta}(t) &\triangleq G_{1}^{\eps,\delta}(t)+G_2^{\eps,\delta}(t), \qquad \text{where}\\
G_1^{\eps,\delta}(t)&=\int_0^t \gDk(x_s)\int_{\pi_\delta(s)}^{s}g(x_{\pi_\delta(r)})\frac{\kappa(X_{\pi_\delta(r)}^{\eps,\delta})-\kappa(x_{\pi_\delta(r)})}{\eps}\thinspace dr ds \\
G_2^{\eps,\delta}(t)&=\int_0^t \gDk(x_s)\int_{\pi_\delta(s)}^{s}\frac{g(x_{\pi_\delta(r)})\kappa(x_{\pi_\delta(r)})}{\eps}\thinspace dr \thinspace ds.
\end{aligned}
\end{equation}

%

\begin{lemma}\label{L:L_1_Estimate}
Let $L_1^{\eps,\delta}(t)$ be as in \eqref{E:L_1AndL_2Estimates}. 
Then, for any $T > 0$, there exists a positive constant $C_{\ref{L:L_1_Estimate}}$ such that for any $0<\eps< \eps_0$, we have
\begin{equation*}
\BE\left[\sup_{0\le t \le T}|L_1^{\eps,\delta}(t)|\right] \le (\cc+1)(\eps+\delta)C_{\ref{L:L_1_Estimate}}e^{C_{\ref{L:L_1_Estimate}}T}.
\end{equation*}
\end{lemma}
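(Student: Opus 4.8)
The plan is to bound $|L_1^{\eps,\delta}(t)|$ pathwise and then take expectations, exploiting the fact that $L_1^{\eps,\delta}$ involves only Lebesgue (not stochastic) integration in the time variables. Because the integrand is dominated by its absolute value, the supremum over $t \in [0,T]$ costs nothing:
\[
\sup_{0\le t \le T}|L_1^{\eps,\delta}(t)| \le \int_0^T |\gDk(x_s)|\int_{\pi_\delta(s)}^{s} \frac{|g(X_r^{\eps,\delta})-g(x_{\pi_\delta(r)})|}{\eps}\,|\kappa(X_{\pi_\delta(r)}^{\eps,\delta})|\,dr\,ds.
\]
Here I would use the boundedness of $g$ and of $D\kappa$ (Assumption \ref{A:Boundedness}) to bound $|\gDk(x_s)| = |g(x_s)D\kappa(x_s)|$ by a constant, and the linear growth of $\kappa$ (Assumption \ref{A:Smooth-LinearGrow}) to write $|\kappa(X_{\pi_\delta(r)}^{\eps,\delta})| \le C(1+|X_{\pi_\delta(r)}^{\eps,\delta}|)$.

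The crucial step is the decomposition of the difference carrying the $\eps^{-1}$ factor, namely $g(X_r^{\eps,\delta})-g(x_{\pi_\delta(r)}) = [g(X_r^{\eps,\delta})-g(x_r)] + [g(x_r)-g(x_{\pi_\delta(r)})]$, followed by the global Lipschitz continuity of $g$, which yields $\eps^{-1}|g(X_r^{\eps,\delta})-g(x_{\pi_\delta(r)})| \le C\eps^{-1}|X_r^{\eps,\delta}-x_r| + C\eps^{-1}|x_r-x_{\pi_\delta(r)}|$. After interchanging expectation and the time integrals by Tonelli's theorem (all integrands being nonnegative), I would treat the two resulting pieces separately. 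For the first piece I would pair $\eps^{-1}|X_r^{\eps,\delta}-x_r|$ with the growth factor $(1+|X_{\pi_\delta(r)}^{\eps,\delta}|)$ via the Cauchy--Schwarz inequality, then invoke Theorem \ref{T:LLN-II} with $p=2$ to get $(\BE|X_r^{\eps,\delta}-x_r|^2)^{1/2} \le C(\eps+\delta)e^{CT}$ and Lemma \ref{L:L2-Est} to bound $(\BE(1+|X_{\pi_\delta(r)}^{\eps,\delta}|)^2)^{1/2}$ by a constant; this contributes a factor of order $\eps^{-1}(\eps+\delta)$. For the second piece, $x_r-x_{\pi_\delta(r)}$ is deterministic and bounded by $C\delta e^{CT}$ via Lemma \ref{L:Sampling-Difference} (with $p=1$), while $\BE(1+|X_{\pi_\delta(r)}^{\eps,\delta}|)$ is again bounded by a constant, contributing a factor of order $\eps^{-1}\delta$.

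Finally I would assemble the estimate. In both pieces the inner integral over $[\pi_\delta(s),s]$ supplies a factor of at most $\delta$, and the outer integral over $[0,T]$ a factor $T$ (absorbed into the exponential constant). The first piece thus yields a term of order $\delta\cdot\eps^{-1}(\eps+\delta) = \delta + \delta^2/\eps$ and the second a term of order $\delta\cdot\eps^{-1}\delta = \delta^2/\eps$. The main (indeed the only genuine) obstacle is controlling the apparently dangerous $\eps^{-1}$ factor; the resolution is that, restricting to $0<\eps<\eps_0$, equation \eqref{E:eps0} gives $\delta/\eps < \cc+1$, whence $\delta^2/\eps = \delta\,(\delta/\eps) < (\cc+1)\delta$ and $\delta \le (\cc+1)\eps$. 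Collecting the terms then produces a bound of the form $(\cc+1)(\eps+\delta)C_{\ref{L:L_1_Estimate}}e^{C_{\ref{L:L_1_Estimate}}T}$, as claimed.
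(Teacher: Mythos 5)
Your proof is correct, and it takes a slightly different (and somewhat leaner) route than the paper. The paper performs a four-way decomposition: in addition to your splitting $g(X_r^{\eps,\delta})-g(x_{\pi_\delta(r)})=[g(X_r^{\eps,\delta})-g(x_r)]+[g(x_r)-g(x_{\pi_\delta(r)})]$, it also splits the second factor as $\kappa(X_{\pi_\delta(r)}^{\eps,\delta})=[\kappa(X_{\pi_\delta(r)}^{\eps,\delta})-\kappa(x_{\pi_\delta(r)})]+\kappa(x_{\pi_\delta(r)})$, producing four terms $\sfN_1^{\eps,\delta}(t),\dots,\sfN_4^{\eps,\delta}(t)$. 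The payoff of that finer split is that in each term the $\eps^{-1}$-weighted difference is paired either with a quantity bounded by a constant (via boundedness of $g$, or Lipschitz continuity of $\kappa$ applied to a difference already controlled in $L^1$) or with the \emph{deterministic} factor $\kappa(x_{\pi_\delta(r)})$, bounded through Lemma \ref{L:Nonlinear-Sys-Sampling-Est}; consequently the paper needs only the $p=1$ case of Theorem \ref{T:LLN-II} and never invokes second moments or the Cauchy--Schwarz inequality. You instead keep $\kappa(X_{\pi_\delta(r)}^{\eps,\delta})$ whole, bound it by linear growth at the \emph{random} point, and then decouple it from the $\eps^{-1}$-weighted difference via Cauchy--Schwarz, which forces you to call on the $p=2$ case of Theorem \ref{T:LLN-II} and the moment bound $\BE[\sup_{0\le t\le T}|X_t^{\eps,\delta}|^2]\le Ce^{CT}$ of Lemma \ref{L:L2-Est}. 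Both arguments are sound and both hinge on the same two pillars: Lemma \ref{L:Sampling-Difference} for the deterministic sampling gap, and the bound $\delta/\eps<\cc+1$ from \eqref{E:eps0} to tame the $\delta^2/\eps$ terms (your final absorption of $\cc+2$ into $2(\cc+1)$ is harmless since constants are generic). Your version is shorter at the cost of heavier $L^2$ machinery; the paper's stays entirely within $L^1$ estimates at the cost of tracking four terms.
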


Coming now to $L_2^{\eps,\delta}(t)$ in equation \eqref{E:G1-Rep}, we first tackle $G_1^{\eps,\delta}(t)$ in Lemma \ref{L:G1-Est}. 
\begin{lemma}\label{L:G1-Est}
Let $G_1^{\eps,\delta}(t)$ be defined in \eqref{E:G1-Rep}. Then for any fixed $T > 0,$ there exists a positive constant $C_{\ref{L:G1-Est}}$ such that for any $0<\eps< \eps_0$, we have
\begin{equation*}
\BE\left[\sup_{0\le t \le T}|G_1^{\eps,\delta}(t)| \right]\le (\cc+1)(\eps +\delta)C_{\ref{L:G1-Est}}e^{C_{\ref{L:G1-Est}}T}.
\end{equation*}
\end{lemma}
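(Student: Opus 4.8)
The plan is to bound $G_1^{\eps,\delta}(t)$ pathwise by exploiting the boundedness of the coefficients together with the global Lipschitz continuity of $\kappa$, thereby reducing the whole estimate to the a priori $L^1$ control of $\sup_{0\le t\le T}|Z_t^{\eps,\delta}|$ already furnished by Theorem \ref{T:LLN-II}. The first and crucial observation is that for $r\in[\pi_\delta(s),s]$ one has $\pi_\delta(r)=\pi_\delta(s)$, since $r$ and $s$ lie in the same sampling block $[\pi_\delta(s),\pi_\delta(s)+\delta)$. Consequently the inner integrand in \eqref{E:G1-Rep} is constant in $r$, and the inner integral collapses to $(s-\pi_\delta(s))\,g(x_{\pi_\delta(s)})\,\eps^{-1}[\kappa(X_{\pi_\delta(s)}^{\eps,\delta})-\kappa(x_{\pi_\delta(s)})]$, where $s-\pi_\delta(s)\le\delta$.

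Next I would take absolute values inside and use that $\gDk(\cdot)=g(\cdot)D\kappa(\cdot)$ and $g$ are bounded (Assumption \ref{A:Boundedness}) and that $\kappa$ is globally Lipschitz (Assumption \ref{A:Smooth-LinearGrow}), to estimate
\begin{equation*}
|G_1^{\eps,\delta}(t)|\le C\delta\int_0^t \frac{|X_{\pi_\delta(s)}^{\eps,\delta}-x_{\pi_\delta(s)}|}{\eps}\,ds = C\delta\int_0^t |Z_{\pi_\delta(s)}^{\eps,\delta}|\,ds \le C\delta\,T\,\sup_{0\le r\le T}|Z_r^{\eps,\delta}|,
\end{equation*}
where I have identified $\eps^{-1}(X_{\pi_\delta(s)}^{\eps,\delta}-x_{\pi_\delta(s)})$ with $Z_{\pi_\delta(s)}^{\eps,\delta}$ and used $|Z_{\pi_\delta(s)}^{\eps,\delta}|\le\sup_{0\le r\le T}|Z_r^{\eps,\delta}|$ since $\pi_\delta(s)\le s\le T$.

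Taking the supremum over $t\in[0,T]$ (the right-hand side is already independent of $t$) and then expectations, it remains to bound $\BE[\sup_{0\le t\le T}|Z_t^{\eps,\delta}|]$. This is precisely where Theorem \ref{T:LLN-II} (with $p=1$) enters: it gives $\BE[\sup_{0\le t\le T}|Z_t^{\eps,\delta}|]=\eps^{-1}\BE[\sup_{0\le t\le T}|X_t^{\eps,\delta}-x_t|]\le \eps^{-1}(\eps+\delta)C_{\ref{T:LLN-II}}e^{C_{\ref{T:LLN-II}}T}$. Since we work in Regimes $1$ and $2$ with $0<\eps<\eps_0$, the bound $\delta<(\cc+1)\eps$ from \eqref{E:eps0} yields $\eps^{-1}(\eps+\delta)\le\cc+2\le 2(\cc+1)$, so $\BE[\sup_{0\le t\le T}|Z_t^{\eps,\delta}|]$ is bounded by a constant multiple of $(\cc+1)e^{CT}$. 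Combining this with the display above gives $\BE[\sup_{0\le t\le T}|G_1^{\eps,\delta}(t)|]\le (\cc+1)\delta\,C e^{CT}$, which is dominated by the claimed bound $(\cc+1)(\eps+\delta)C_{\ref{L:G1-Est}}e^{C_{\ref{L:G1-Est}}T}$ upon using $\delta\le\eps+\delta$ and choosing $C_{\ref{L:G1-Est}}$ suitably.

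I do not anticipate a genuine obstacle: the term $G_1^{\eps,\delta}(t)$ is in fact of order $\delta$, comfortably within the claimed order $\max\{\eps,\delta\}$. The only two points requiring care are the observation that $\pi_\delta$ is constant across the inner integration window (which makes the factor $s-\pi_\delta(s)\le\delta$ appear cleanly and lets the inner integral be evaluated exactly), and the invocation of the already-established a priori estimate on $\sup_{0\le t\le T}|Z_t^{\eps,\delta}|$ via Theorem \ref{T:LLN-II}, without which the argument would be circular.
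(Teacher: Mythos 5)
Your proposal is correct and follows essentially the same route as the paper's (very terse) proof: bound the integrand using boundedness of $g$ and $D\kappa$ and the Lipschitz continuity of $\kappa$, pick up the factor $s-\pi_\delta(s)\le\delta$ from the inner integral, and conclude via Theorem \ref{T:LLN-II} (with $p=1$) together with $\delta<(\cc+1)\eps$ from \eqref{E:eps0}. Your observation that $\pi_\delta(r)=\pi_\delta(s)$ on the inner integration window is valid but not actually needed, since bounding the integrand by $\sup_{0\le r\le T}|Z^{\eps,\delta}_r|$ works regardless.
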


To facilitate analysis of $G_2^{\eps,\delta}(t)$, we further decompose it as the sum $\sum_{i=1}^{3} \sfP_{i}^{\eps,\delta}(t)$ in Lemma \ref{L:P_estimates}.

\begin{lemma}\label{L:P_estimates}
Let $G_2^{\eps,\delta}(t)$ be defined in \eqref{E:G1-Rep}. Then, for any $\eps,\thinspace \delta \in (0,1)$ and $t\ge 0,$ we have
\begin{equation*}
\begin{aligned}
G_2^{\eps,\delta}(t) &= \sum_{i=1}^{3} \sfP_{i}^{\eps,\delta}(t), \thinspace \text{where}\\
{\sf P}_{1}^{\eps,\delta}(t)& \triangleq \frac{1}{\eps}\int_0^t g(x_s)\left[D\kappa(x_s)-D\kappa(x_{\pi_\delta(s)})\right]g(x_{\pi_\delta(s)})\int_{\pi_\delta(s)}^{s}\kappa(x_{\pi_\delta(r)})\thinspace dr \thinspace ds,\\
{\sf P}_{2}^{\eps,\delta}(t)& \triangleq \frac{1}{\eps}\int_0^t \gDk(x_{\pi_\delta(s)})g(x_{\pi_\delta(s)})\int_{\pi_\delta(s)}^{s}\kappa(x_{\pi_\delta(r)})\thinspace dr \thinspace ds,\\
{\sf P}_{3}^{\eps,\delta}(t)& \triangleq \frac{1}{\eps}\int_0^t\left[g(x_s)-g(x_{\pi_\delta(s)})\right]D\kappa(x_{\pi_\delta(s)})g(x_{\pi_\delta(s)})\int_{\pi_\delta(s)}^{s}\kappa(x_{\pi_\delta(r)})\thinspace dr \thinspace ds.
\end{aligned}
\end{equation*}
\end{lemma}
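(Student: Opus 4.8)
The plan is to prove Lemma \ref{L:P_estimates} as an \emph{exact} pointwise identity: no estimates are involved, so the entire content is careful bookkeeping of matrix products, anchored by a single observation about the sampling operator $\pi_\delta$. The key observation is that for $r\in[\pi_\delta(s),s)$ one has $\pi_\delta(r)=\pi_\delta(s)$. Indeed, from $\pi_\delta(s)=\delta\lfloor s/\delta\rfloor$ we get $\pi_\delta(s)\le s<\pi_\delta(s)+\delta$, so every such $r$ lies in $[\pi_\delta(s),\pi_\delta(s)+\delta)$, whence $\lfloor r/\delta\rfloor=\lfloor s/\delta\rfloor$ and therefore $\pi_\delta(r)=\pi_\delta(s)$. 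Consequently $g(x_{\pi_\delta(r)})=g(x_{\pi_\delta(s)})$ is constant in $r$ throughout the inner integral defining $G_2^{\eps,\delta}(t)$ in \eqref{E:G1-Rep}, and this matrix may be pulled out to give
\[
G_2^{\eps,\delta}(t)=\frac{1}{\eps}\int_0^t \gDk(x_s)\,g(x_{\pi_\delta(s)})\int_{\pi_\delta(s)}^{s}\kappa(x_{\pi_\delta(r)})\thinspace dr\thinspace ds.
\]

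Next I would expand $\gDk(x_s)=g(x_s)D\kappa(x_s)$ and decompose the $\BR^{n\times n}$-valued prefactor $g(x_s)D\kappa(x_s)g(x_{\pi_\delta(s)})$ by two successive add-and-subtract (telescoping) steps. First, inserting $\pm\,g(x_s)D\kappa(x_{\pi_\delta(s)})g(x_{\pi_\delta(s)})$ splits off $g(x_s)[D\kappa(x_s)-D\kappa(x_{\pi_\delta(s)})]g(x_{\pi_\delta(s)})$, which is precisely the prefactor of $\sfP_1^{\eps,\delta}(t)$. Second, writing $g(x_s)=[g(x_s)-g(x_{\pi_\delta(s)})]+g(x_{\pi_\delta(s)})$ in the remaining term $g(x_s)D\kappa(x_{\pi_\delta(s)})g(x_{\pi_\delta(s)})$ separates the prefactor of $\sfP_3^{\eps,\delta}(t)$ from $g(x_{\pi_\delta(s)})D\kappa(x_{\pi_\delta(s)})g(x_{\pi_\delta(s)})=\gDk(x_{\pi_\delta(s)})g(x_{\pi_\delta(s)})$, the prefactor of $\sfP_2^{\eps,\delta}(t)$. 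Multiplying each of these three prefactors on the right by $\eps^{-1}\int_{\pi_\delta(s)}^{s}\kappa(x_{\pi_\delta(r)})\thinspace dr$ and integrating in $s$ over $[0,t]$ reproduces the three terms $\sfP_1^{\eps,\delta}(t)$, $\sfP_3^{\eps,\delta}(t)$, $\sfP_2^{\eps,\delta}(t)$ of the statement verbatim, and their sum equals the displayed form of $G_2^{\eps,\delta}(t)$ by construction.

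There is no genuine analytic obstacle here, since the claimed identity $G_2^{\eps,\delta}=\sum_{i=1}^3\sfP_i^{\eps,\delta}$ holds pathwise, i.e.\ pointwise in $(\omega,t)$. The only points demanding care are (i) the floor-function identity $\pi_\delta(r)=\pi_\delta(s)$ on $[\pi_\delta(s),s)$, which is what legitimizes pulling $g(x_{\pi_\delta(s)})$ out of the inner integral, and (ii) preserving the left-to-right order of the matrix factors $g\in\BR^{n\times m}$, $D\kappa\in\BR^{m\times n}$, $g\in\BR^{n\times m}$, so that every product is conformable and the telescoping cancellations are valid. A quick dimension check confirms that the surviving vector $\int_{\pi_\delta(s)}^{s}\kappa(x_{\pi_\delta(r)})\thinspace dr\in\BR^m$ sits correctly at the right end of each product, yielding an $\BR^n$-valued integrand as required.
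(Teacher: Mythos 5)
Your proof is correct and follows essentially the same route as the paper: pull $g(x_{\pi_\delta(r)})=g(x_{\pi_\delta(s)})$ out of the inner integral (the floor-function fact the paper leaves implicit behind ``it is easily seen'') and then perform the same two add-and-subtract splittings, first in $D\kappa$ to isolate $\sfP_1^{\eps,\delta}$, then in $g$ to separate $\sfP_2^{\eps,\delta}$ from $\sfP_3^{\eps,\delta}$. Your explicit justification of $\pi_\delta(r)=\pi_\delta(s)$ for $r\in[\pi_\delta(s),s)$ and the conformability check are welcome additions, but the argument is the same.
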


We next analyze the terms $\sfP_{i}^{\eps,\delta}(t)$, $1 \le i \le 3$. Lemma \ref{L:ell2Approximation} estimates the error between ${\sf P}_{2}^{\eps,\delta}(t)$ and $\ell_2(t)$ (recall equation \eqref{E:ell_1-and-ell_2}), proving the convergence of the former to the latter. Next, we show in Lemma \ref{L:P1AndP3Estimates} that $\sfP_{1}^{\eps,\delta}(t)$, $\sfP_{3}^{\eps,\delta}(t)$ vanish in the limit as $\eps \searrow 0$. 

%

\begin{lemma}\label{L:ell2Approximation}
Let $\ell_2(t)\triangleq \frac{\cc}{2} \int_0^t \gDk(x_s)g(x_s)\kappa(x_s)\thinspace ds$, and suppose that ${\sf P}_2^{\eps,\delta}(t)$ is as defined in Lemma \ref{L:P_estimates}. 
Then, for any fixed $T>0$, there exists a positive constant $C_{\ref{L:ell2Approximation}}$ such that for any $0<\eps<\eps_0$, we have
\begin{equation*}
\sup_{0\le t \le T}|\sfP_2^{\eps,\delta}(t)-\ell_2(t)|\le \left[\delta(\cc+1)+{\varkappa(\eps)}\right]C_{\ref{L:ell2Approximation}}e^{C_{\ref{L:ell2Approximation}}T},
\end{equation*}
where $\kap(\eps)$, defined in \eqref{E:kappa}, satisfies $\lim_{\eps \searrow 0}\kap(\eps)=0$.
\end{lemma}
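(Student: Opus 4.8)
The plan is to first collapse the inner $r$-integral, which reveals that ${\sf P}_2^{\eps,\delta}(t)$ has precisely the structure of the term $l_1^{\eps,\delta}(t)$ already analyzed in Lemma \ref{L:ell_1Approx}, and then to run that same argument with $f$ everywhere replaced by the map $x \mapsto g(x)\kappa(x)$. First I would note that for any $s$ and any $r \in [\pi_\delta(s),s]$ one has $\pi_\delta(s) \le r < \pi_\delta(s)+\delta$, so $\pi_\delta(r)=\pi_\delta(s)$; hence $\kappa(x_{\pi_\delta(r)})=\kappa(x_{\pi_\delta(s)})$ is constant on the inner interval and $\int_{\pi_\delta(s)}^{s}\kappa(x_{\pi_\delta(r)})\,dr=\kappa(x_{\pi_\delta(s)})[s-\pi_\delta(s)]$. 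This reduces the term to
$$
{\sf P}_2^{\eps,\delta}(t)=\frac{1}{\eps}\int_0^t \gDk(x_{\pi_\delta(s)})\,g(x_{\pi_\delta(s)})\,\kappa(x_{\pi_\delta(s)})\,[s-\pi_\delta(s)]\,ds,
$$
which is exactly $l_1^{\eps,\delta}(t)$ under the substitution $f \to g\kappa$; the target $\ell_2(t)$ is $\ell_1(t)$ under the same substitution. Since $g$ is bounded (Assumption \ref{A:Boundedness}) and $\kappa$ is globally Lipschitz with linear growth (Assumption \ref{A:Smooth-LinearGrow}), the product $g\kappa$ enjoys the same regularity (Lipschitz on the bounded set housing the paths, together with linear growth) that $f$ possessed in Lemma \ref{L:ell_1Approx}, so that argument applies verbatim.

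For completeness I would outline that argument. Writing $\Phi(x)\triangleq\gDk(x)g(x)\kappa(x)$ and $N\triangleq\lfloor t/\delta\rfloor$, I would split the integral into complete sampling cells $[k\delta,(k+1)\delta)$, $0\le k\le N-1$, plus a terminal partial cell $[N\delta,t]$. On each complete cell the coefficient $\Phi(x_{\pi_\delta(s)})=\Phi(x_{k\delta})$ is constant, and the exact computation $\int_{k\delta}^{(k+1)\delta}[s-k\delta]\,ds=\delta^2/2$ turns the main part into $\frac{\delta^2}{2\eps}\sum_{k=0}^{N-1}\Phi(x_{k\delta})$; the terminal cell contributes at most $\frac{\delta^2}{2\eps}C\le\frac{(\cc+1)\delta}{2}C$ using $\delta<(\cc+1)\eps$ from \eqref{E:eps0} and boundedness of $\Phi$ along the path. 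Decomposing $\delta^2/(2\eps)=\tfrac{\cc}{2}\delta+\tfrac12\big(\tfrac{\delta}{\eps}-\cc\big)\delta$ and recalling $\kap(\eps)=\big|\tfrac{\delta}{\eps}-\cc\big|$ from \eqref{E:kappa}, the second piece is bounded by $\tfrac12\kap(\eps)\,\delta\sum_{k=0}^{N-1}|\Phi(x_{k\delta})|\le\kap(\eps)\,C$, since the Riemann sum is controlled by $\int_0^T|\Phi(x_s)|\,ds$.

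It then remains to compare the surviving piece $\tfrac{\cc}{2}\,\delta\sum_{k=0}^{N-1}\Phi(x_{k\delta})=\tfrac{\cc}{2}\int_0^{N\delta}\Phi(x_{\pi_\delta(s)})\,ds$ with $\ell_2(t)=\tfrac{\cc}{2}\int_0^t\Phi(x_s)\,ds$: on $[0,N\delta]$ the discrepancy is handled by Lipschitz continuity of $\Phi$ and the sampling bound $|x_s-x_{\pi_\delta(s)}|\le\delta\,Ce^{CT}$ of Lemma \ref{L:Sampling-Difference}, yielding an $O(\cc\delta)$ term, while the leftover $[N\delta,t]$ contributes $O(\cc\delta)$ by boundedness of $\Phi$. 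Collecting the three error sources and taking $\sup_{0\le t\le T}$ gives the asserted bound $[\delta(\cc+1)+\kap(\eps)]C_{\ref{L:ell2Approximation}}e^{C_{\ref{L:ell2Approximation}}T}$. The one point I would treat carefully—the main, though mild, obstacle—is justifying the Lipschitz estimate for $\Phi=\gDk\cdot g\cdot\kappa$: this combines boundedness of $g$, $D\kappa$ and $D^2\kappa$ (Assumption \ref{A:Boundedness}) with the path bound of Lemma \ref{L:Nonlinear-Sys-Sampling-Est}, after which all Lipschitz constants and the value of $\cc$ are absorbed into the generic constant $C$.
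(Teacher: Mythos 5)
Your proposal is correct and follows essentially the same route as the paper: you collapse the inner integral using $\pi_\delta(r)=\pi_\delta(s)$, exploit the exact computation $\int_{k\delta}^{(k+1)\delta}[s-k\delta]\,ds=\delta^2/2$, split $\delta/\eps$ against $\cc$ to produce the $\kap(\eps)$ term via \eqref{E:kappa} and \eqref{E:eps0}, and control the terminal cell and the Riemann-sum discrepancy through boundedness and Lipschitz estimates for $\gDk\, g\,\kappa$ along the path (Assumptions \ref{A:Smooth-LinearGrow}, \ref{A:Boundedness}, Lemmas \ref{L:Nonlinear-Sys-Sampling-Est} and \ref{L:Sampling-Difference}). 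The only cosmetic difference is that the paper delegates the Riemann-sum-versus-integral comparison to the dedicated Lemma \ref{L:Remainder-Term-Simplification} (its $R_2^{\eps,\delta}$ part), whereas you re-derive that same estimate inline after observing that ${\sf P}_2^{\eps,\delta}$ is $l_1^{\eps,\delta}$ with $f$ replaced by $g\kappa$.
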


\begin{lemma}\label{L:P1AndP3Estimates}
For any $T > 0,$ there exists a positive constant $C_{\ref{L:P1AndP3Estimates}}$ such that for $0<\eps<\eps_0$, we have
\begin{equation*}
\begin{aligned}
\sup_{0\le t \le T}\left(|{\sf P}_1^{\eps,\delta}(t)|+|{\sf P}_3^{\eps,\delta}(t)| \right)  \le \delta(\cc+1)C_{\ref{L:P1AndP3Estimates}}e^{C_{\ref{L:P1AndP3Estimates}}T}. 
\end{aligned}
\end{equation*}
\end{lemma}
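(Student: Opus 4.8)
The plan is to show that both $\sfP_1^{\eps,\delta}(t)$ and $\sfP_3^{\eps,\delta}(t)$ are purely deterministic quantities of order $\delta^2/\eps$, which, courtesy of the regime restriction $\delta<(\cc+1)\eps$ recorded in \eqref{E:eps0}, is itself of order $(\cc+1)\delta$. The essential structural feature---and the reason these two terms vanish in the limit while $\sfP_2^{\eps,\delta}(t)$ survives---is that each of $\sfP_1^{\eps,\delta}(t)$, $\sfP_3^{\eps,\delta}(t)$ carries a \emph{difference} factor ($D\kappa(x_s)-D\kappa(x_{\pi_\delta(s)})$ and $g(x_s)-g(x_{\pi_\delta(s)})$, respectively) which, via Lemma \ref{L:Sampling-Difference}, contributes a gain of $\delta$ beyond the factor of $\delta$ already supplied by the inner integral.

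First I would simplify the inner integral. Since for $r\in[\pi_\delta(s),s]$ one has $\pi_\delta(r)=\pi_\delta(s)$, it follows that $\int_{\pi_\delta(s)}^{s}\kappa(x_{\pi_\delta(r)})\thinspace dr=\kappa(x_{\pi_\delta(s)})[s-\pi_\delta(s)]$. Using the linear growth of $\kappa$ (Assumption \ref{A:Smooth-LinearGrow}) together with the boundedness of $x$ on $[0,T]$ (Lemma \ref{L:Nonlinear-Sys-Sampling-Est}), and the fact that $|s-\pi_\delta(s)|\le\delta$, this inner integral is bounded in magnitude by $\delta\thinspace Ce^{CT}$.

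Next, for $\sfP_1^{\eps,\delta}(t)$ I would pass the absolute value inside the $s$-integral and bound the integrand factor by factor: $|g(x_s)|$ and $|g(x_{\pi_\delta(s)})|$ are bounded by Assumption \ref{A:Boundedness}; the factor $|D\kappa(x_s)-D\kappa(x_{\pi_\delta(s)})|$ is controlled using that $D\kappa$ is globally Lipschitz---a consequence of the boundedness of the second-order partial derivatives of $\kappa$ in Assumption \ref{A:Boundedness}---so that $|D\kappa(x_s)-D\kappa(x_{\pi_\delta(s)})|\le C|x_s-x_{\pi_\delta(s)}|\le\delta\thinspace Ce^{CT}$ by Lemma \ref{L:Sampling-Difference}. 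Combining these with the inner-integral bound gives $|\sfP_1^{\eps,\delta}(t)|\le\eps^{-1}\int_0^t \delta^2 Ce^{CT}\thinspace ds\le\delta^2\eps^{-1}Ce^{CT}$. The term $\sfP_3^{\eps,\delta}(t)$ is handled identically, except that the difference factor is now $g(x_s)-g(x_{\pi_\delta(s)})$, bounded by $C|x_s-x_{\pi_\delta(s)}|\le\delta\thinspace Ce^{CT}$ via the Lipschitz continuity of $g$ (Assumption \ref{A:Smooth-LinearGrow}) and Lemma \ref{L:Sampling-Difference}, while $|D\kappa(x_{\pi_\delta(s)})|$ and the two $g$-factors are bounded by Assumption \ref{A:Boundedness}; this again yields $|\sfP_3^{\eps,\delta}(t)|\le\delta^2\eps^{-1}Ce^{CT}$.

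Finally, I would invoke the regime constraint: since $0<\eps<\eps_0$ forces $\delta<(\cc+1)\eps$ by \eqref{E:eps0}, we have $\delta^2/\eps=\delta\cdot(\delta/\eps)\le(\cc+1)\delta$. Taking the supremum over $t\in[0,T]$ and adding the two estimates delivers the claimed bound. I do not anticipate a genuine obstacle here: the argument is a direct factor-by-factor estimate, and the only point requiring a moment's care is recognizing that the extra smallness is furnished by the difference factors rather than worrying that the $1/\eps$ prefactor is dangerous---indeed, the $1/\eps$ is precisely compensated by one power of $\delta$ through the regime inequality, exactly as the difference factors supply the second power of $\delta$ that makes these terms asymptotically negligible.
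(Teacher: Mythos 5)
Your proposal is correct and follows essentially the same route as the paper: extract the factor $\delta/\eps$ times $(1+\sup_{0\le t\le T}|x_t|)$ using linear growth of $\kappa$ and boundedness of $g$, gain the second power of $\delta$ from the difference factors $|D\kappa(x_s)-D\kappa(x_{\pi_\delta(s)})|$ and $|g(x_s)-g(x_{\pi_\delta(s)})|$ via Lemmas \ref{L:Sampling-Difference} and \ref{L:Nonlinear-Sys-Sampling-Est}, and close with $\delta/\eps<\cc+1$ from \eqref{E:eps0}. The only cosmetic difference is that you invoke global Lipschitz continuity of $D\kappa$ directly from the bounded second-order partials, whereas the paper cites the second line of \eqref{E:Kappa-Derivative-Est} in Lemma \ref{L:Kappa-Derivative-Est}, which encapsulates exactly that argument.
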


Finally, the next two lemmas allow us to show that $|{\sf M}_3^{\eps,\delta}(t)|$ and $|{\sf M}_4^{\eps,\delta}(t)|$ converge to zero as $\eps \searrow 0$.

\begin{lemma}{\label{L:M3_Estimate}}
Let ${\sf M}_3^{\eps,\delta}(t)$ be as defined in equation \eqref{E:M1234}. 
Then, for any $T > 0,$ there exists a positive constant $C_{\ref{L:M3_Estimate}}$ such that for $0<\eps<\eps_0$, we have
\begin{equation*}
\BE\left[\sup_{0\le t \le T}|{\sf M}_3^{\eps,\delta}(t)|\right]\le \eps(\cc+1)^2 C_{\ref{L:M3_Estimate}}e^{C_{\ref{L:M3_Estimate}}T}.
\end{equation*}
\end{lemma}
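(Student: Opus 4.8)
The plan is to estimate the two constituent integrals of ${\sf M}_3^{\eps,\delta}(t)$ from \eqref{E:M1234} separately. Write ${\sf M}_3^{\eps,\delta}(t) = A^{\eps,\delta}(t) + B^{\eps,\delta}(t)$, where
\[
A^{\eps,\delta}(t) \triangleq \frac{1}{\eps}\int_0^t \gDk(x_s)\int_{\pi_\delta(s)}^{s}\{f(X_r^{\eps,\delta})-f(X_{\pi_\delta(r)}^{\eps,\delta})\}\, dr\, ds
\]
is the double-integral term and $B^{\eps,\delta}(t)$ is the remaining single-integral term. In both cases the $\sup_{0\le t\le T}|\cdot|$ is controlled by a nonnegative, $t$-nondecreasing majorant, so the supremum is attained (in the bound) at $t=T$ and may be moved under the expectation by Fubini. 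Throughout I will use that $\gDk=gD\kappa$ is bounded (Assumption \ref{A:Boundedness}), that $f$ is globally Lipschitz (Assumption \ref{A:Smooth-LinearGrow}), that $s-\pi_\delta(s)\le\delta$, and that $\delta<(\cc+1)\eps$ for $0<\eps<\eps_0$ by \eqref{E:eps0}.

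For $B^{\eps,\delta}(t)$, boundedness of $\gDk$, the Lipschitz bound on $f$, and $s-\pi_\delta(s)\le\delta$ give $|B^{\eps,\delta}(t)|\le (C\delta/\eps)\int_0^t |X_{\pi_\delta(s)}^{\eps,\delta}-x_{\pi_\delta(s)}|\, ds \le (C\delta T/\eps)\sup_{0\le r\le T}|X_r^{\eps,\delta}-x_r|$. Taking expectations and invoking Theorem \ref{T:LLN-II} with $p=1$ yields a bound of order $(\delta/\eps)(\eps+\delta)Ce^{CT}$; since $\delta/\eps<\cc+1$ and $\eps+\delta<(\cc+2)\eps\le 2(\cc+1)\eps$, this is $\le \eps(\cc+1)^2 Ce^{CT}$, as required.

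The heart of the argument is $A^{\eps,\delta}(t)$, and the main obstacle is controlling the increment $X_r^{\eps,\delta}-X_{\pi_\delta(r)}^{\eps,\delta}$ (noting $\pi_\delta(r)=\pi_\delta(s)$ for $r\in[\pi_\delta(s),s]$). Boundedness of $\gDk$ and the Lipschitz property of $f$ reduce matters to bounding $(C/\eps)\int_0^T\int_{\pi_\delta(s)}^s \BE|X_r^{\eps,\delta}-X_{\pi_\delta(r)}^{\eps,\delta}|\, dr\, ds$. From the integral form of \eqref{E:sie} I would split the increment into a drift part $\int_{\pi_\delta(r)}^r [f(X_u^{\eps,\delta})+g(X_u^{\eps,\delta})\kappa(X_{\pi_\delta(u)}^{\eps,\delta})]\, du$ and a noise part $\eps\int_{\pi_\delta(r)}^r \sigma(X_u^{\eps,\delta})\, dW_u$. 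The drift part is $O(\delta)$ in $L^1$ via linear growth and the second-moment bound of Lemma \ref{L:L2-Est}; the noise part is handled exactly as in \eqref{E:L1-Est}, i.e.\ Burkholder--Davis--Gundy followed by Jensen for the concave square root, giving an $L^1$ bound of order $\eps(\int_{\pi_\delta(r)}^r (1+\BE|X_u^{\eps,\delta}|^2)\, du)^{1/2}=O(\eps\sqrt{\delta})$. Hence $\BE|X_r^{\eps,\delta}-X_{\pi_\delta(r)}^{\eps,\delta}|\le (\delta+\eps\sqrt{\delta})Ce^{CT}$, and after the two remaining integrations (each inner integral contributing a factor $\le\delta$) one obtains $\BE[\sup_{0\le t\le T}|A^{\eps,\delta}(t)|]\le C(\delta^2/\eps+\delta^{3/2})e^{CT}$.

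It remains to convert this into the stated form using \eqref{E:eps0}, which is the delicate bookkeeping step. For the first term, $\delta^2/\eps=(\delta/\eps)\delta<(\cc+1)\delta<(\cc+1)^2\eps$. For the second, $\delta^{3/2}=\delta\cdot\delta^{1/2}<(\cc+1)\eps\cdot((\cc+1)\eps)^{1/2}=(\cc+1)^{3/2}\eps^{3/2}\le(\cc+1)^2\eps$, where the last step uses $\eps<\eps_0<1$ (so $\eps^{3/2}\le\eps$) together with $\cc+1\ge1$. Combining the bounds for $A^{\eps,\delta}$ and $B^{\eps,\delta}$ gives $\BE[\sup_{0\le t\le T}|{\sf M}_3^{\eps,\delta}(t)|]\le\eps(\cc+1)^2 C_{\ref{L:M3_Estimate}}e^{C_{\ref{L:M3_Estimate}}T}$, completing the proof. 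The only genuinely nontrivial points, as anticipated, are the $L^1$ bound on the stochastic increment and turning the resulting $\delta^2/\eps$ and $\delta^{3/2}$ into a clean multiple of $\eps(\cc+1)^2$.
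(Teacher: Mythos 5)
Your proposal is correct, and it agrees with the paper's proof in overall structure: the same splitting of ${\sf M}_3^{\eps,\delta}(t)$ into the double-integral term (your $A^{\eps,\delta}$, the paper's $Q_1^{\eps,\delta}$) and the single-integral term (your $B^{\eps,\delta}$, the paper's $Q_2^{\eps,\delta}$), with the single-integral term treated identically (boundedness of $\gDk$, Lipschitz $f$, Theorem \ref{T:LLN-II}, then \eqref{E:eps0}). Where you genuinely diverge is the heart of the argument, the increment $X_r^{\eps,\delta}-X_{\pi_\delta(r)}^{\eps,\delta}$ inside the double integral. The paper never touches the {\sc sde} here: it inserts $x_r$ and $x_{\pi_\delta(r)}$ by the triangle inequality, so that the increment is controlled by $\sup_{r}|X_r^{\eps,\delta}-x_r|$ (Theorem \ref{T:LLN-II}) plus the deterministic sampling gap $|x_r-x_{\pi_\delta(r)}|$ (Lemma \ref{L:Sampling-Difference}), yielding a bound of the form $\frac{\delta}{\eps}\{(\eps+\delta)T+\delta\}Ce^{CT}$. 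You instead estimate the increment directly from \eqref{E:sie}, splitting into a drift part of order $\delta$ (linear growth plus the moment bound of Lemma \ref{L:L2-Est}) and a stochastic-integral part of order $\eps\sqrt{\delta}$ (Burkholder--Davis--Gundy and Jensen, as in \eqref{E:L1-Est}; It\^o isometry plus Jensen would also suffice since no supremum over the short interval is needed), arriving at $C(\delta^2/\eps+\delta^{3/2})e^{CT}$. What each approach buys: the paper's route is more economical, recycling results already proved and avoiding any new stochastic analysis; your route is self-contained at that step and gives a slightly sharper intermediate estimate ($\delta+\eps\sqrt{\delta}$ versus $\eps+\delta$ for the increment, which is $o(\eps)$ rather than $O(\eps)$ in Regime 1), though after multiplication by $\delta/\eps$ both land on the same final bound $\eps(\cc+1)^2C_{\ref{L:M3_Estimate}}e^{C_{\ref{L:M3_Estimate}}T}$, and your closing bookkeeping via \eqref{E:eps0} (including $\delta^{3/2}\le(\cc+1)^{3/2}\eps^{3/2}\le(\cc+1)^2\eps$ for $\eps<\eps_0<1$) is sound.
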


\begin{lemma}\label{L:M4-Noise-Term-Small}
Let ${\sf M}_4^{\eps,\delta}(t)$ be as defined in equation \eqref{E:M1234}. Then, for any fixed $T>0$, there exists a positive constant $C_{\ref{L:M4-Noise-Term-Small}}$ such that for any $\eps,\delta>0$
\begin{equation*}
\BE\left[\sup_{0 \le t \le T}|{\sf M}_4^{\eps,\delta}(t)|\right] \le \Big(\eps^2 +\sqrt{\delta}\Big)C_{\ref{L:M4-Noise-Term-Small}}e^{C_{\ref{L:M4-Noise-Term-Small}}T}.
\end{equation*}
\end{lemma}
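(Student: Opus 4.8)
The plan is to exploit the fact that the inner stochastic integral in ${\sf M}_4^{\eps,\delta}(t)$ is taken over a time window of length at most $\delta$, which is precisely what generates the $\sqrt{\delta}$ decay. Writing $N_s \triangleq \int_{\pi_\delta(s)}^{s}\sigma(X_u^{\eps,\delta})\thinspace dW_u$, we have ${\sf M}_4^{\eps,\delta}(t)=\int_0^t \gDk(x_s)N_s\thinspace ds$. Since $\gDk(x_s)=g(x_s)D\kappa(x_s)$ with both $g$ and $D\kappa$ bounded by Assumption \ref{A:Boundedness}, we have $|\gDk(x_s)|\le C$ uniformly in $s$. First I would bring the supremum inside the outer Lebesgue integral via the elementary estimate
\[
\sup_{0\le t\le T}|{\sf M}_4^{\eps,\delta}(t)|=\sup_{0\le t\le T}\left|\int_0^t \gDk(x_s)N_s\thinspace ds\right|\le C\int_0^T |N_s|\thinspace ds,
\]
where the right-hand side no longer depends on $t$. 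Taking expectations and invoking Tonelli's theorem then reduces the claim to estimating $\int_0^T \BE[|N_s|]\thinspace ds$.

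To control $\BE[|N_s|]$ I would pass to the second moment through Jensen's (equivalently, Cauchy--Schwarz) inequality, $\BE[|N_s|]\le (\BE[|N_s|^2])^{1/2}$, and then apply the Ito isometry componentwise to the short stochastic integral $N_s$. Using the linear growth of $\sigma$ from Assumption \ref{A:Smooth-LinearGrow}, namely $|\sigma(X_u^{\eps,\delta})|^2\le C(1+|X_u^{\eps,\delta}|^2)$, together with $s-\pi_\delta(s)<\delta$, this yields
\[
\BE[|N_s|^2]\le C\thinspace\BE\int_{\pi_\delta(s)}^{s}\left(1+|X_u^{\eps,\delta}|^2\right)du\le C\delta\left(1+\BE\left[\sup_{0\le u\le T}|X_u^{\eps,\delta}|^2\right]\right).
\]
The uniform second-moment bound for the solution supplied by Lemma \ref{L:L2-Est} then gives $\BE[|N_s|^2]\le C\delta\thinspace e^{CT}$, whence $\BE[|N_s|]\le C\sqrt{\delta}\thinspace e^{CT}$ uniformly in $s\in[0,T]$.

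Assembling these pieces, $\BE[\sup_{0\le t\le T}|{\sf M}_4^{\eps,\delta}(t)|]\le C\int_0^T \BE[|N_s|]\thinspace ds\le C\sqrt{\delta}\thinspace e^{CT}$, and since $\sqrt{\delta}\le \eps^2+\sqrt{\delta}$ this is in fact stronger than the stated bound once the constants are relabelled as $C_{\ref{L:M4-Noise-Term-Small}}$. The only points requiring any care are routine: verifying the joint measurability and integrability of $(s,\omega)\mapsto \gDk(x_s)N_s(\omega)$ needed to apply Tonelli (both follow from the above estimates), and correctly handling the matrix--vector structure of $\sigma\thinspace dW$ when invoking the Ito isometry. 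There is no genuine obstacle; the decisive observation is simply that the square root in Jensen's inequality converts the order-$\delta$ second moment of the short stochastic integral into the order-$\sqrt{\delta}$ bound appearing in the statement.
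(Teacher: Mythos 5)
Your proof is correct and follows essentially the same route as the paper's: both arguments reduce the claim, via Jensen's inequality and the componentwise It\^o isometry, to the second moment of the stochastic integral over the window $[\pi_\delta(s),s]$ of length at most $\delta$, which is exactly where the $\sqrt{\delta}$ arises. The only cosmetic differences are that the paper applies Cauchy--Schwarz to the outer $ds$ integral (using $\int_0^t|\gDk(x_s)|^2\,ds \le C$) instead of your pointwise bound $\sup_t|{\sf M}_4^{\eps,\delta}(t)|\le C\int_0^T|N_s|\,ds$, and it controls $\BE|X_u^{\eps,\delta}|^2$ through Theorem \ref{T:LLN-II} together with Lemma \ref{L:Nonlinear-Sys-Sampling-Est} rather than quoting the first claim of Lemma \ref{L:L2-Est} as you do; both routes are equally valid.
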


As noted at the beginning of this section, the proofs of Lemmas \ref{L:M_estimates} through \ref{L:M4-Noise-Term-Small} will be given in Section \ref{S:CLT-Lemmas}. We now give the proof of Proposition \ref{P:MainTermApprox}.


\begin{proof}[Proof of Proposition \ref{P:MainTermApprox}]
By Lemma \ref{L:M_estimates}, we get $\int_0^t \gDk(x_s)\frac{X_s^{\varepsilon, \delta}-X_{\pi_\delta(s)}^{\varepsilon, \delta}}{\eps}\thinspace ds - 
\ell(t)= \sum_{i=1}^{4} {\sfM}_i^{\eps,\delta}(t) - \ell(t).$
Recalling the decomposition outlined in Figure \ref{F:Decomposition} and equation \eqref{E:ell_1-and-ell_2}, we have 
\begin{multline*}
\left|\int_0^t \gDk(x_s)\frac{X_s^{\varepsilon, \delta}-X_{\pi_\delta(s)}^{\varepsilon, \delta}}{\eps}\thinspace ds - 
\ell(t) \right|  \le \left| l_1^{\eps,\delta}(t) - \ell_1(t)\right|+\left|l_2^{\eps,\delta}(t)+l_3^{\eps,\delta}(t)\right|+\left|L_1^{\eps,\delta}(t)+G_1^{\eps,\delta}(t)\right|\\
+\left|{\sf P}_1^{\eps,\delta}(t)+{\sf P}_3^{\eps,\delta}(t)\right|+\left|{\sf P}_2^{\eps,\delta}(t)-\ell_2(t)\right|+ \left|{\sf M}_3^{\eps,\delta}(t)+{\sf M}_4^{\eps,\delta}(t)\right|.
\end{multline*}
We now use the estimates in Lemmas \ref{L:ell_1Approx}, \ref{L:l2Andl3_Estimates}, \ref{L:P1AndP3Estimates} to handle the terms $\left| l_1^{\eps,\delta}(t) - \ell_1(t)\right|,\left|l_2^{\eps,\delta}(t)+l_3^{\eps,\delta}(t)\right|,\\ \left|{\sf P}_1^{\eps,\delta}(t)+{\sf P}_3^{\eps,\delta}(t)\right|$, respectively. The term $\left|L_1^{\eps,\delta}(t)+G_1^{\eps,\delta}(t)\right|$ can be dealt with using Lemmas \ref{L:L_1_Estimate} and \ref{L:G1-Est}. 
The proof of the proposition can now be easily completed using Lemmas \ref{L:ell2Approximation}, \ref{L:M3_Estimate}, and \ref{L:M4-Noise-Term-Small} to estimate the quantities  $\left|{\sf P}_2^{\eps,\delta}(t)-\ell_2(t)\right|, \left|{\sf M}_3^{\eps,\delta}(t)\right|$, and $\left|{\sf M}_4^{\eps,\delta}(t)\right|$, respectively.

\end{proof}


\subsection{Proofs of Propositions \ref{P:f-Taylor-Approx} through \ref{P:Noise-Term-Est}}\label{S:PropositionsProofs}
We start by briefly recalling Taylor's formula with remainder for functions of several variables, since this will be used frequently in our calculations. Our notation here follows \cite{apostol1974}. For $x,y \in \BR^n$, let $L(x,y)\triangleq \{sx+(1-s)y : 0\le s \le 1\}$ denote the line segment joining $x$ and $y$. Suppose we have a function $f: \BR^n \rightarrow \BR$ whose first and second order partial derivatives exist. Given $x,t \in \BR^n$, we let $f'(x;t)$, $f''(x;t)$ denote the quantities
\begin{equation}\label{E:prime-notation}
f'(x;t) \triangleq \sum_{i=1}^{n}\frac{\partial f}{\partial x_i}(x)t_i= \langle \nabla f(x),t \rangle, \quad \text{and} \quad
f''(x;t) \triangleq \sum_{i=1}^n\sum_{j=1}^n \frac{\partial^2 f}{\partial x_i \partial x_j}(x)t_jt_i= \langle D^2f(x)t,t \rangle.
\end{equation} 
The symbol $f^{(m)}(x;t)$ can be defined in a similar way if all $m$-th order partial derivatives exist. For instance, assuming the existence of third order partial derivatives, we write
\begin{equation*}
f'''(x;t) \triangleq \sum_{i=1}^n\sum_{j=1}^n\sum_{k=1}^n \frac{\partial^3 f}{\partial x_i \partial x_j \partial x_k}(x)t_kt_jt_i.
\end{equation*}


%

\begin{proposition}(Taylor's formula)\cite{apostol1974}\label{T:Taylor-Theorem}
Let $S$ be an open subset of $\BR^n$, and suppose we have a function $f: S \rightarrow \BR$. Assume that $f$ and all its partial derivatives of order less than $m$ are differentiable at each point of $S$. If $x,y$ are two points in $S$ such that $L(x,y)\subset S,$ then there is a point $z$ on the line segment $L(x,y)$ such that 
\begin{equation*}
f(y)-f(x)= \sum_{k=1}^{m-1}\frac{1}{k!}f^{(k)}(x;y-x)+ \frac{1}{m!}f^{(m)}(z;y-x).
\end{equation*}
\end{proposition}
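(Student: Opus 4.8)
The plan is to reduce this multivariate statement to the classical one-dimensional Taylor theorem with Lagrange remainder by restricting $f$ to the line segment $L(x,y)$. First I would introduce the auxiliary function $\varphi:[0,1]\to\BR$ defined by $\varphi(s)\triangleq f(x+s(y-x))$. This is well-defined precisely because the hypothesis $L(x,y)\subset S$ guarantees that the argument $x+s(y-x)$ lies in $S$ for every $s\in[0,1]$, so the composition makes sense throughout.

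Next I would establish, by induction on $k$, that for $1\le k\le m$ one has
\[
\varphi^{(k)}(s)=f^{(k)}\bigl(x+s(y-x);\,y-x\bigr),
\]
with $f^{(k)}(a;t)$ the directional expression defined in \eqref{E:prime-notation}. The base case $k=1$ is the chain rule: differentiating $s\mapsto f(x+s(y-x))$ gives $\varphi'(s)=\sum_{i=1}^{n}\frac{\partial f}{\partial x_i}(x+s(y-x))(y_i-x_i)=f'(x+s(y-x);y-x)$. The inductive step differentiates the analogous expression for $\varphi^{(k)}$, applying the chain rule to each partial derivative appearing in $f^{(k)}$; this introduces one additional summation index and reproduces exactly $f^{(k+1)}$ with the correct factorial-free coefficient. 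The differentiability hypotheses are what make this legitimate: since $f$ and all of its partial derivatives of order less than $m$ are assumed \emph{differentiable} (total differentiability, not mere existence of the partials), the chain rule applies at each of the first $m$ stages, so $\varphi$ is $m$ times differentiable on $[0,1]$, with $\varphi^{(m)}$ existing on the open interval $(0,1)$.

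Then I would apply the one-variable Taylor theorem with Lagrange remainder to $\varphi$ on $[0,1]$: there exists $\theta\in(0,1)$ with
\[
\varphi(1)=\sum_{k=0}^{m-1}\frac{\varphi^{(k)}(0)}{k!}+\frac{\varphi^{(m)}(\theta)}{m!},
\]
the powers of the increment $1-0=1$ dropping out. Finally, I would translate back using the identities furnished by the induction: $\varphi(1)=f(y)$, $\varphi(0)=f(x)$, $\varphi^{(k)}(0)=f^{(k)}(x;y-x)$, and $\varphi^{(m)}(\theta)=f^{(m)}(z;y-x)$ where $z\triangleq x+\theta(y-x)$ lies on $L(x,y)$. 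Moving the $k=0$ term $\varphi(0)=f(x)$ to the left-hand side yields the asserted identity.

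The main obstacle is the careful bookkeeping in the inductive chain-rule computation—verifying that differentiating $f^{(k)}(x+s(y-x);y-x)$ in $s$ produces precisely $f^{(k+1)}(x+s(y-x);y-x)$ with the correct multi-index structure—and, more subtly, ensuring that the total-differentiability hypothesis (rather than mere existence of partial derivatives) is explicitly invoked so that the chain rule is valid at each stage. Once the derivative formula for $\varphi^{(k)}$ is in hand, the reduction to the scalar Taylor theorem is entirely routine.
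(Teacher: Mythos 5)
Your proposal is correct and takes essentially the same route as the paper's source: the paper states this proposition without proof, citing \cite{apostol1974}, and the argument there is precisely your reduction---restrict $f$ to the segment via $\varphi(s)=f(x+s(y-x))$, verify $\varphi^{(k)}(s)=f^{(k)}(x+s(y-x);y-x)$ inductively by the chain rule (valid because the partials of order less than $m$ are assumed totally differentiable, not merely existent), and apply the one-variable Taylor theorem with Lagrange remainder on $[0,1]$, setting $z=x+\theta(y-x)\in L(x,y)$. No gaps: your explicit attention to where total differentiability is invoked, and to $\varphi^{(m)}$ existing on the open interval, is exactly the care the hypotheses demand.
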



Before proving Propositions \ref{P:f-Taylor-Approx} through \ref{P:Noise-Term-Est}, we quickly recall the following vector and matrix norms. For any $x\in \BR^n$, $|x| \triangleq \sum_{i=1}^{n}|x_i|$ and $\|x\| \triangleq \sqrt{\sum_{i=1}^{n}|x_i|^2}$ denote the one and two norms on $\BR^n$, respectively. For $M\in \BR^{m\times n}$, $|M|$ and $\|M\|$ denote the corresponding induced matrix norms.

\begin{proof}[Proof of Proposition \ref{P:f-Taylor-Approx}] 
Using Taylor's formula, i.e., Proposition \ref{T:Taylor-Theorem} , we get
\begin{multline*}
\left|\frac{f(X_s^{\eps,\delta})-f(x_s)}{\eps}-Df(x_s) Z_s^{\eps,\delta}\right| \\
 \qquad \qquad \qquad \qquad =\left|\left(\frac{f_1(X_s^{\eps,\delta})-f_1(x_s)}{\eps}-\nabla f_1(x_s)Z_s^{\eps,\delta},...,\frac{f_n(X_s^{\eps,\delta})-f_n(x_s)}{\eps}-\nabla f_n(x_s)Z_s^{\eps,\delta}\right)^\top \right|\\
\qquad \qquad \qquad \qquad =\frac{1}{2\eps}\left|\left(\langle D^2f_1(z_1)(X_s^{\eps,\delta}-x_s),X_s^{\eps,\delta}-x_s \rangle,...,\langle D^2f_n(z_n)(X_s^{\eps,\delta}-x_s),X_s^{\eps,\delta}-x_s \rangle   \right)^\top\right|\\
\qquad \qquad  \qquad=\frac{1}{2\eps}\sum_{i=1}^{n}|\langle D^2f_i(z_i)(X_s^{\eps,\delta}-x_s),X_s^{\eps,\delta}-x_s \rangle|, 
\end{multline*}
where $z_i$, $1 \le i \le n$, are points lying on the line segment joining $x_s$ and $X_s^{\eps,\delta}$. Now, using the Cauchy-Schwarz inequality and the submultiplicative property of the matrix norms \cite{Hes_LST}, we get
\begin{equation*}
\begin{aligned}
\left|\frac{f(X_s^{\eps,\delta})-f(x_s)}{\eps}-Df(x_s) Z_s^{\eps,\delta}\right| &\le \sum_{i=1}^{n}\frac{\|D^2f_i(z_i)\|\|X_s^{\eps,\delta}-x_s\|^2}{2\eps}
\le \frac{C}{\eps} \sum_{i=1}^{n}|D^2f_i(z_i)|\left[\sup_{0\le s \le T}|X_s^{\eps,\delta}-x_s|^2\right],
\end{aligned}
\end{equation*}
where the last inequality uses the equivalence property of matrix norms \cite{Hes_LST}. Using the boundedness of second order partial derivatives of $f$ from Assumption \ref{A:Boundedness}, we get
\begin{equation*}
\begin{aligned}
\sup_{0\le t \le T}\int_0^t\left|\frac{f(X_s^{\eps,\delta})-f(x_s)}{\eps}-Df(x_s) Z_s^{\eps,\delta}\right|\thinspace ds & \le \frac{T {C}}{2\eps} \left[\sup_{0\le s \le T}|X_s^{\eps,\delta}-x_s|^2\right].
\end{aligned}
\end{equation*}
We now get the required bound by Theorem \ref{T:LLN-II}. The second part of the lemma is obtained by similar arguments,  using the  boundedness of $g$ and the second order partial derivatives of $\kappa$ (Assumption \ref{A:Boundedness}).
\end{proof}

\begin{proof}[Proof of Proposition \ref{P:Add-Term-Approx}]
Let $g=[g_1,g_2,...,g_m],$ where $g_i \in \BR^n$ for each $i=1,...,m$.\footnote{Thus, $g$ is an $n\times m$ matrix with columns $g_i \in \BR^n$, $1 \le i \le m$.} Now, 
\begin{equation}\label{E:AdditTermRep}
{[g(X_s^{\eps,\delta})-g(x_s)]\kappa(X_s^{\eps,\delta})}= {[g(X_s^{\eps,\delta})-g(x_s)][\kappa(X_s^{\eps,\delta})-\kappa(x_s)]}+[g(X_s^{\eps,\delta})-g(x_s)]\kappa(x_s)\triangleq I_1+I_2.
\end{equation}
For $I_1,$ using the Lipschitz continuity of $g$ and $\kappa$ from Assumption \ref{A:Smooth-LinearGrow}, we get
$|I_1|\le C |X_s^{\eps,\delta}-x_s|^2.$
Now, $I_2= \sum_{i=1}^{m}[g_i(X_s^{\eps,\delta})-g_i(x_s)]\kappa_i(x_s)$. For each $i=1,...,m,$ using Taylor's formula, i.e., Proposition \ref{T:Taylor-Theorem}, we get
\begin{multline*}
g_i(X_s^{\eps,\delta})-g_i(x_s)=Dg_i(x_s)(X_s^{\eps,\delta}-x_s)\\+\frac{1}{2}\left(\left\langle D^2g_{i,1}(z_1)(X_s^{\eps,\delta}-x_s),(X_s^{\eps,\delta}-x_s) \right \rangle,...,\left\langle D^2g_{i,n}(z_{n})(X_s^{\eps,\delta}-x_s),(X_s^{\eps,\delta}-x_s) \right\rangle \right)^\top,
\end{multline*}
where for each fixed $i,~D^2
g_{i,j}$ represents the Hessian matrix of the $j$-th component of $g_i$, and $z_j, j=1,...,n$, are points lying on the line segment joining $x_s$ and $X_s^{\eps,\delta}.$ Therefore,
\begin{multline}\label{E:AdditTermTaylorRep}
I_2= \sum_{i=1}^{m}\left\{g_i(X_s^{\eps,\delta})-g_i(x_s)\right\}\kappa_i(x_s)=\sum_{i=1}^{m}Dg_i(x_s)(X_s^{\eps,\delta}-x_s)\kappa_i(x_s)\\
+\sum_{i=1}^{m}\frac{1}{2}\left(\left\langle D^2g_{i,1}(z_1)(X_s^{\eps,\delta}-x_s),(X_s^{\eps,\delta}-x_s) \right \rangle,...,\left\langle D^2g_{i,n}(z_{n})(X_s^{\eps,\delta}-x_s),(X_s^{\eps,\delta}-x_s) \right\rangle \right)^\top \kappa_i(x_s).
\end{multline}
From \eqref{E:AdditTermRep} and \eqref{E:AdditTermTaylorRep}, we have
\begin{multline*}
\begin{aligned}
&\left|\frac{[g(X_s^{\eps,\delta}) -g(x_s)]\kappa(X_s^{\eps,\delta})}{\eps} - \sum_{i=1}^{m} Dg_i(x_s)Z_s^{\eps,\delta}\kappa_i(x_s) \right|  \le \frac{|I_1|}{\eps}\\
&+ \frac{1}{2\eps}\left|\sum_{i=1}^{m}\left(\left\langle D^2g_{i,1}(z_1)(X_s^{\eps,\delta}-x_s),(X_s^{\eps,\delta}-x_s) \right\rangle,...,\left\langle D^2g_{i,n}(z_{n})(X_s^{\eps,\delta}-x_s),(X_s^{\eps,\delta}-x_s) \right\rangle \right)^\top \kappa_i(x_s)\right|\\
&\le \frac{|I_1|}{\eps}\\
& + \frac{1}{2 \eps} \sum_{i=1}^m \left|\left(\left\langle D^2g_{i,1}(z_1)(X_s^{\eps,\delta}-x_s),(X_s^{\eps,\delta}-x_s) \right\rangle,...,\left\langle D^2g_{i,n}(z_{n})(X_s^{\eps,\delta}-x_s),(X_s^{\eps,\delta}-x_s) \right\rangle \right)^\top \kappa_i(x_s)\right|\\
&= \frac{|I_1|}{\eps} + \frac{1}{2\eps} \sum_{i=1}^{m}\sum_{j=1}^{n}\left|\left(\left\langle D^2g_{i,j}(z_j)(X_s^{\eps,\delta}-x_s),(X_s^{\eps,\delta}-x_s) \right\rangle \right)\kappa_i(x_s)\right|.
\end{aligned}
\end{multline*}
Now, using the Cauchy-Schwarz inequality and the submultiplicative  property of matrix norms \cite{Hes_LST}, we have 
\begin{multline*}
\left|\frac{[g(X_s^{\eps,\delta}) -g(x_s)]\kappa(X_s^{\eps,\delta})}{\eps} - \sum_{i=1}^{m} Dg_i(x_s)Z_s^{\eps,\delta}\kappa_i(x_s) \right|\\
\le \frac{|I_1|}{\eps} + \frac{1}{2\eps} \sum_{i=1}^{m}|\kappa_i(x_s)|\sum_{j=1}^{n}\|D^2g_{i,j}(z_j)\|\|X_s^{\eps,\delta}-x_s\|^2\\
\le \frac{C}{\eps} \left[ \sup_{0\le s \le T}|X_s^{\eps,\delta}-x_s|^2 \right] + \frac{C}{2\eps} \sum_{i=1}^{m}|\kappa_i(x_s)|\sum_{j=1}^{n}|D^2g_{i,j}(z_j)|\left[\sup_{0\le s \le T}|X_s^{\eps,\delta}-x_s|^2\right].
\end{multline*}
Now, integrating both sides, then using the boundedness of the second order partial derivatives of $g$ (Assumption \ref{A:Boundedness}), the linear growth assumption on $\kappa$ (Assumption \ref{A:Smooth-LinearGrow}), and recalling Theorem \ref{T:LLN-II}, we get the required estimate.
\end{proof}

Before proving Proposition \ref{P:Sampling-Term-Approx}, we state and prove two Lemmas \ref{L:Kappa-Derivative-Est} and \ref{L:Highr-Power-Der-lemma}. These lemmas will be used in the proof of Proposition \ref{P:Sampling-Term-Approx} below, and also in Section \ref{S:CLT-Lemmas} when we furnish the proofs of Lemmas \ref{L:l2Andl3_Estimates} and \ref{L:P1AndP3Estimates} stated earlier.

\begin{lemma}\label{L:Kappa-Derivative-Est}
Let $X_t^{\eps,\delta}$ and $x_t$ be the strong solution and solution of \eqref{E:sie} and \eqref{E:nonlin-sys}, respectively. For $\kappa: \BR^n \rightarrow \BR^m,$ set $h\triangleq D\kappa : \BR^n \rightarrow \BR^{m \times n}.$ Then, there exists a positive constant $C_{\ref{L:Kappa-Derivative-Est}}$ such that for $t\in [0,T]$, we have
\begin{equation}\label{E:Kappa-Derivative-Est}
\begin{aligned}
\left|h(x_t)-h(X_{\pi_{\delta}(t)}^{\eps,\delta})\right| &\triangleq \left|D\kappa(x_t)-D\kappa(X_{\pi_{\delta}(t)}^{\eps,\delta})\right|\le C_{\ref{L:Kappa-Derivative-Est}}\left(|X_t^{\eps,\delta}-x_t|+\left|X_t^{\eps,\delta}-X_{\pi_{\delta}(t)}^{\eps,\delta}\right|\right),\\
|h(x_t)-h(x_{\pi_{\delta}(t)})| &\triangleq|D\kappa(x_t)-D\kappa(x_{\pi_{\delta}(t)})|\le \delta C_{\ref{L:Kappa-Derivative-Est}}e^{C_{\ref{L:Kappa-Derivative-Est}}T}.
\end{aligned}
\end{equation}
\end{lemma}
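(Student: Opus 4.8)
The proof hinges on a single analytic observation: the matrix-valued map $h \triangleq D\kappa : \BR^n \to \BR^{m \times n}$ is globally Lipschitz continuous. Indeed, each entry of $h$ is a first-order partial derivative $\partial \kappa_i / \partial x_j$, whose own gradient consists of second-order partial derivatives of $\kappa$; these are bounded by Assumption \ref{A:Boundedness}. Applying the mean value theorem componentwise (equivalently, Taylor's formula, Proposition \ref{T:Taylor-Theorem}, to first order), and then passing to the induced matrix norm via the equivalence of norms on finite-dimensional spaces, I would extract a constant $C$ with $|h(a)-h(b)| \le C|a-b|$ for all $a,b \in \BR^n$. This Lipschitz bound is the only real input; both claimed inequalities follow from it by elementary manipulations.

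For the first inequality, I would apply the Lipschitz bound with $a=x_t$ and $b=X_{\pi_\delta(t)}^{\eps,\delta}$, obtaining $|h(x_t)-h(X_{\pi_\delta(t)}^{\eps,\delta})| \le C|x_t - X_{\pi_\delta(t)}^{\eps,\delta}|$. Inserting the intermediate point $X_t^{\eps,\delta}$ and using the triangle inequality gives $|x_t - X_{\pi_\delta(t)}^{\eps,\delta}| \le |X_t^{\eps,\delta}-x_t| + |X_t^{\eps,\delta}-X_{\pi_\delta(t)}^{\eps,\delta}|$, which is precisely the desired form. Taking $C_{\ref{L:Kappa-Derivative-Est}}$ to be (at least) this $C$ closes out the first estimate.

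For the second inequality, the same Lipschitz bound with $a=x_t$, $b=x_{\pi_\delta(t)}$ yields $|h(x_t)-h(x_{\pi_\delta(t)})| \le C|x_t - x_{\pi_\delta(t)}|$. The sampling-difference estimate of Lemma \ref{L:Sampling-Difference} (taken with $p=1$) then controls $|x_t - x_{\pi_\delta(t)}| \le \delta\, C e^{CT}$ for $t \in [0,T]$. Combining these two bounds and absorbing the constants into a single $C_{\ref{L:Kappa-Derivative-Est}}$ gives the claimed inequality. Since every step is either an application of the global Lipschitz property of $D\kappa$ or an invocation of a previously established bound, I anticipate no genuine obstacle; the only point meriting a little care is the passage between the one-norm, the two-norm, and the corresponding induced matrix norms, which is routine given the equivalence of norms in finite dimensions.
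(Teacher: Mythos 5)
Your proposal is correct and follows essentially the same route as the paper: both rest on the global Lipschitz property of $h \triangleq D\kappa$ extracted entrywise via Taylor's formula (Proposition \ref{T:Taylor-Theorem}) and the boundedness of the second-order partials of $\kappa$ (Assumption \ref{A:Boundedness}), combined with a triangle inequality and Lemma \ref{L:Sampling-Difference} with $p=1$ for the second estimate. The only cosmetic difference is that you apply the triangle inequality to the arguments $|x_t - X_{\pi_\delta(t)}^{\eps,\delta}| \le |X_t^{\eps,\delta}-x_t| + |X_t^{\eps,\delta}-X_{\pi_\delta(t)}^{\eps,\delta}|$ after a single Lipschitz bound, whereas the paper applies it to the function values $|h(x_t)-h(X_{\pi_\delta(t)}^{\eps,\delta})| \le |h(X_t^{\eps,\delta})-h(x_t)| + |h(X_t^{\eps,\delta})-h(X_{\pi_\delta(t)}^{\eps,\delta})|$ and uses two Lipschitz bounds, which yields the identical estimate.
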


\begin{lemma}\label{L:Highr-Power-Der-lemma}
Let $X_t^{\eps,\delta}$ and $x_t$ be the strong solution and solution of \eqref{E:sie} and \eqref{E:nonlin-sys}, respectively. For $\kappa: \BR^n \rightarrow \BR^m,$ set $h\triangleq D\kappa : \BR^n \rightarrow \BR^{m \times n}.$ Then, there exists a positive constant $C_{\ref{L:Highr-Power-Der-lemma}}$ such that 
\begin{equation*}
\begin{aligned}
\BE\left[\sup_{0\le t \le T}\int_0^t\left|\left[h(x_s)-h(X_{\pi_{\delta}(s)}^{\eps,\delta})\right]\left(X_s^{\eps,\delta}-X_{{\pi_\delta(s)}}^{\eps,\delta}\right)\right| ds\right]& \le (\eps^2+\delta^2+\eps \delta)C_{\ref{L:Highr-Power-Der-lemma}}e^{C_{\ref{L:Highr-Power-Der-lemma}}T}, \\
 \BE\left[\sup_{0 \le t \le T} \int_0^t \left|X_s^{\eps,\delta}-X_{\pi_{\delta}(s)}^{\eps,\delta}\right|^2 ds \right] & \le (\eps^2+\delta^2)C_{\ref{L:Highr-Power-Der-lemma}}e^{C_{\ref{L:Highr-Power-Der-lemma}}T}.
\end{aligned}
\end{equation*}
\end{lemma}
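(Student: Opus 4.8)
The plan is to prove the second estimate first, since it is the simpler of the two and will feed directly into the proof of the first. To bound $\int_0^t |X_s^{\eps,\delta}-X_{\pi_\delta(s)}^{\eps,\delta}|^2\,ds$, I would use the integral form of the {\sc sde} \eqref{E:sie} to write, for each fixed $s$, the increment $X_s^{\eps,\delta}-X_{\pi_\delta(s)}^{\eps,\delta}$ as the sum of a drift integral $\int_{\pi_\delta(s)}^s [f(X_u^{\eps,\delta})+g(X_u^{\eps,\delta})\kappa(X_{\pi_\delta(u)}^{\eps,\delta})]\,du$ over a window of length at most $\delta$, plus a stochastic integral $\eps\int_{\pi_\delta(s)}^s \sigma(X_u^{\eps,\delta})\,dW_u$. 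Squaring and applying \eqref{E:Triangle-Inequ}, the drift part contributes a factor $\delta$ from the length of the window (via Cauchy--Schwarz/H\"older) times $\delta$ again from the outer integral, giving an $O(\delta^2)$ bound after invoking the linear growth of $f,g,\kappa$ and the moment bound $\BE[\sup_{0\le t\le T}|X_t^{\eps,\delta}|^2]\le C e^{CT}$ from Lemma \ref{L:L2-Est}. The stochastic part carries the prefactor $\eps^2$; taking expectation and using the It\^o isometry on the inner integral (whose integrand is controlled by linear growth of $\sigma$ and the same moment bound) yields the $\eps^2$ contribution. Collecting terms gives the claimed $(\eps^2+\delta^2)C e^{CT}$.

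For the first estimate, I would start by applying the Cauchy--Schwarz inequality (in the form of H\"older with conjugate exponents $2,2$) pointwise inside the time integral to separate the two factors:
\begin{equation*}
\int_0^t\left|\left[h(x_s)-h(X_{\pi_\delta(s)}^{\eps,\delta})\right]\left(X_s^{\eps,\delta}-X_{\pi_\delta(s)}^{\eps,\delta}\right)\right|ds \le \left(\int_0^t \left|h(x_s)-h(X_{\pi_\delta(s)}^{\eps,\delta})\right|^2 ds\right)^{1/2}\left(\int_0^t \left|X_s^{\eps,\delta}-X_{\pi_\delta(s)}^{\eps,\delta}\right|^2 ds\right)^{1/2}.
\end{equation*}
The second factor is handled by the second estimate of this lemma (already proved), contributing a term of size $(\eps+\delta)$ after taking the square root. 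For the first factor I would use the bound on $|h(x_s)-h(X_{\pi_\delta(s)}^{\eps,\delta})|$ furnished by the first inequality of Lemma \ref{L:Kappa-Derivative-Est}, namely $C_{\ref{L:Kappa-Derivative-Est}}(|X_s^{\eps,\delta}-x_s|+|X_s^{\eps,\delta}-X_{\pi_\delta(s)}^{\eps,\delta}|)$. Squaring this (again via \eqref{E:Triangle-Inequ}) and integrating, the $|X_s^{\eps,\delta}-x_s|^2$ piece is controlled in expectation by Theorem \ref{T:LLN-II} (giving $O(\eps^2+\delta^2)$), while the $|X_s^{\eps,\delta}-X_{\pi_\delta(s)}^{\eps,\delta}|^2$ piece is again the second estimate of this lemma ($O(\eps^2+\delta^2)$). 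Thus the first factor is also of size $(\eps+\delta)$ in the appropriate $L^2$ sense.

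The main obstacle is the bookkeeping required to take the expectation of a \emph{product} of two random square-root terms while keeping the $\sup_{0\le t\le T}$ inside. I would address this by first bounding the integrand uniformly by replacing $\int_0^t$ with $\int_0^T$ (so the supremum over $t$ acts only on the upper limit and can be discarded before taking expectation), then applying the Cauchy--Schwarz inequality \emph{at the level of the expectation} over the two square-root factors:
\begin{equation*}
\BE\left[\left(\int_0^T A_s\,ds\right)^{1/2}\left(\int_0^T B_s\,ds\right)^{1/2}\right] \le \left(\BE\int_0^T A_s\,ds\right)^{1/2}\left(\BE\int_0^T B_s\,ds\right)^{1/2},
\end{equation*}
where $A_s=|h(x_s)-h(X_{\pi_\delta(s)}^{\eps,\delta})|^2$ and $B_s=|X_s^{\eps,\delta}-X_{\pi_\delta(s)}^{\eps,\delta}|^2$. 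Each factor on the right is then $O(\eps+\delta)$ by the arguments above, so their product is $O((\eps+\delta)^2)=O(\eps^2+\delta^2+\eps\delta)$, matching the claimed bound $(\eps^2+\delta^2+\eps\delta)C_{\ref{L:Highr-Power-Der-lemma}}e^{C_{\ref{L:Highr-Power-Der-lemma}}T}$. The appearance of the cross term $\eps\delta$ is precisely the signature of this product structure, which is why the first estimate carries it while the second does not.
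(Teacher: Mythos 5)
Your proof is correct, but it takes a genuinely different route from the paper's. For the second estimate, the paper never returns to the {\sc sde}: it uses the pathwise triangle inequality $|X_s^{\eps,\delta}-X_{\pi_\delta(s)}^{\eps,\delta}| \le 2\sup_{0\le r\le T}|X_r^{\eps,\delta}-x_r| + |x_s-x_{\pi_\delta(s)}|$ and then quotes Theorem \ref{T:LLN-II} and Lemma \ref{L:Sampling-Difference}, whereas you re-derive the bound from the integral representation \eqref{E:sie} via H\"older's inequality and the It\^o isometry together with the moment bound of Lemma \ref{L:L2-Est}; your version is self-contained and in fact slightly sharper (it yields $\delta^2+\eps^2\delta$). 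For the first estimate, the paper stays pointwise: using Lemma \ref{L:Kappa-Derivative-Est} and add-and-subtract manipulations it expands the product into the three terms $\sup_{0\le s\le T}|X_s^{\eps,\delta}-x_s|^2$, $|X_s^{\eps,\delta}-x_s|\,|x_s-x_{\pi_\delta(s)}|$, and $|x_s-x_{\pi_\delta(s)}|^2$, then applies Theorem \ref{T:LLN-II} for \emph{both} $p=1$ and $p=2$ together with Lemma \ref{L:Sampling-Difference}; there the cross term $\eps\delta$ genuinely arises from the mixed product $(\eps+\delta)\cdot\delta$. You instead apply Cauchy--Schwarz twice---once in time, once at the level of expectations---and reduce everything to two $L^2$ quantities, each of size $\eps+\delta$; note that your product actually comes out as $(\eps^2+\delta^2)C e^{CT}$, which implies the stated bound, so the $\eps\delta$ term in your write-up is an artifact of bookkeeping rather than a structural necessity. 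Your route buys cleaner bookkeeping (only second moments are ever needed, no add-subtract chains); the paper's buys elementarity (it reuses results already proved and invokes no further stochastic calculus). One small imprecision: in your drift bound for the second estimate, the second factor of $\delta$ comes from the length of the \emph{inner} integration window (after bounding the integrand by its supremum), not ``from the outer integral,'' which contributes the factor $T$ absorbed into $Ce^{CT}$.
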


\begin{proof}[Proof of Lemma \ref{L:Kappa-Derivative-Est}]
Set $h\triangleq D\kappa : \BR^n \rightarrow \BR^{m \times n}$ and write $h=(h_{ij}),$ where each $h_{ij}$ is a real-valued function. Note here that the entries of $h$ are the first order partial derivatives of $\kappa.$ Using Proposition \ref{T:Taylor-Theorem} (Taylor's formula)  for each entry of $h$, we have
\begin{equation}\label{E:Taylor-Deri-kappa}
h_{ij}(X_t^{\eps,\delta})-h_{ij}(x_t)=\nabla h_{ij}(z_{ij}).(X_t^{\eps,\delta}-x_t),
\end{equation}
for each $i=1,...,m$ and $j=1,...,n.$ Here, $z_{ij}\in \BR^n$ are  points lying on the line segment joining $x_t$ and $X_t^{\eps,\delta}.$ Further, using the Cauchy-Schwarz inequality and the boundedness of the \textit{second} order partial derivatives of $\kappa$ (Assumption \ref{A:Boundedness}) in \eqref{E:Taylor-Deri-kappa}, we get
\begin{equation}\label{E:Deri-Without-Sampling}
\begin{aligned}
|h(X_t^{\eps,\delta})-h(x_t)|=\max_{1 \le j \le n}\sum_{i=1}^{m}\left|h_{ij}(X_t^{\eps,\delta})-h_{ij}(x_t)\right| &\le \max_{1 \le j \le n}\sum_{i=1}^{m} \|\nabla h_{ij}(z_{ij})\|\|X_t^{\eps,\delta}-x_t\|\\
 & \le C|X_t^{\eps,\delta}-x_t|.
\end{aligned}
\end{equation}
Similarly,
\begin{equation}\label{E:Deri-With-Sampling}
\left|h(X_t^{\eps,\delta})-h(X_{\pi_{\delta}(t)}^{\eps,\delta})\right|\le C\left|X_t^{\eps,\delta}-X_{\pi_{\delta}(t)}^{\eps,\delta}\right|.
\end{equation}
Now, using the triangle inequality, we get
$\left|h(x_t)-h(X_{\pi_{\delta}(t)}^{\eps,\delta})\right|\le |h(X_t^{\eps,\delta})-h(x_t)|+\left|h(X_t^{\eps,\delta})-h(X_{\pi_\delta(t)}^{\eps,\delta})\right|.$
The first line in equation \eqref{E:Kappa-Derivative-Est} is now obtained by combining \eqref{E:Deri-Without-Sampling} and \eqref{E:Deri-With-Sampling}. For the second part of the lemma, we write
\begin{equation*}
|h(x_{\pi_{\delta}(t)})-h(x_t)|=\max_{1 \le j \le n}\sum_{i=1}^{m}|h_{ij}(x_{\pi_{\delta}(t)})-h_{ij}(x_t)| \le \max_{1 \le j \le n}\sum_{i=1}^{m} \|\nabla h_{ij}(z_{ij})\|\|x_{\pi_{\delta}(t)}-x_t\|.
\end{equation*}
Now recalling Lemma \ref{L:Sampling-Difference}, we obtain the second line in equation \eqref{E:Kappa-Derivative-Est}.
\end{proof}


\begin{proof}[Proof of Lemma \ref{L:Highr-Power-Der-lemma}]
Using the submultiplicative property of matrix norm and Lemma \ref{L:Kappa-Derivative-Est}, we have 
\begin{multline}\label{E:Higher-Power-Der}
\left|\left[h(x_s)-h(X_{\pi_{\delta}(s)}^{\eps,\delta})\right]\left(X_s^{\eps,\delta}-X_{{\pi_\delta(s)}}^{\eps,\delta}\right)\right|  \le C\left(|X_s^{\eps,\delta}-x_s|+\left|X_s^{\eps,\delta}-X_{\pi_{\delta}(s)}^{\eps,\delta}\right|\right)\left|X_s^{\eps,\delta}-X_{{\pi_\delta(s)}}^{\eps,\delta}\right|\\
\qquad \qquad \qquad \qquad \qquad \qquad \qquad \qquad \quad \le C \left(|X_s^{\eps,\delta}-x_s|\left|X_s^{\eps,\delta}-X_{{\pi_\delta(s)}}^{\eps,\delta}\right|+\left|X_s^{\eps,\delta}-X_{\pi_{\delta}(s)}^{\eps,\delta}\right|^2\right)\\
\qquad \qquad \qquad \qquad \qquad \qquad \quad  \le C\left(|X_s^{\eps,\delta}-x_s|^2+ |X_s^{\eps,\delta}-x_s|\left|x_s-X_{\pi_{\delta}(s)}^{\eps,\delta}\right|+\left|X_s^{\eps,\delta}-X_{\pi_{\delta}(s)}^{\eps,\delta}\right|^2\right)\\
\qquad \qquad \qquad \qquad \le C\left(\sup_{0 \le s \le T}\left|X_s^{\eps,\delta}-x_s\right|^2+ |X_s^{\eps,\delta}-x_s||x_s-x_{\pi_{\delta}(s)}|+\left|X_s^{\eps,\delta}-X_{\pi_{\delta}(s)}^{\eps,\delta}\right|^2\right).
\end{multline}
The third line in the above equation is obtained by adding and subtracting $x_s$ in the middle term of the second line, i.e., $\left|X_s^{\eps,\delta}-X_{{\pi_\delta(s)}}^{\eps,\delta}\right|.$
To deal with the last term of the above equation, first we add and subtract $x_s$ and then $x_{\pi_{\delta}(s)}$ to get
$\left|X_s^{\eps,\delta}-X_{\pi_{\delta}(s)}^{\eps,\delta}\right|^2  \le C\left[ \sup_{0\le s \le T}|X_s^{\eps,\delta}-x_s|^2+ \left|x_s-x_{\pi_\delta(s)}\right|^2 \right].$\\
Hence, from \eqref{E:Higher-Power-Der}, we obtain
\begin{multline*}
\left|\left[h(x_s)-h(X_{\pi_{\delta}(s)}^{\eps,\delta})\right]\left(X_s^{\eps,\delta}-X_{{\pi_\delta(s)}}^{\eps,\delta}\right)\right| \le \\
 C \left[\sup_{0 \le s \le T}\left|X_s^{\eps,\delta}-x_s \right|^2+ |X_s^{\eps,\delta}-x_s||x_s-x_{\pi_{\delta}(s)}|+\left|x_s-x_{\pi_\delta(s)}\right|^2 \right].
\end{multline*}
Finally, using Theorem \ref{T:LLN-II} and Lemma \ref{L:Sampling-Difference}, we get
\begin{equation*}
\begin{aligned}
\BE\left[\sup_{0\le t \le T}\int_0^t\left|\left[h(x_s)-h(X_{\pi_{\delta}(s)}^{\eps,\delta})\right]\left(X_s^{\eps,\delta}-X_{{\pi_\delta(s)}}^{\eps,\delta}\right)\right| ds\right] & \le (\eps^2\sqrt{n}+\delta^2+\eps \delta)T^2Ce^{CT} \qquad \text{and}\\
\BE\left[\int_0^t \left|X_s^{\eps,\delta}-X_{\pi_{\delta}(s)}^{\eps,\delta}\right|^2 ds \right] &\le (\eps^2\sqrt{n}+\delta^2)T^2Ce^{CT}.
\end{aligned}
\end{equation*}
\end{proof}


\begin{proof}[Proof of Proposition \ref{P:Sampling-Term-Approx}]
Here,
\begin{multline}\label{E:SamplingTermRep}
g(X_s^{\eps,\delta})\left[\kappa(X_s^{\eps,\delta})-\kappa(X_{\pi_\delta(s)}^{\eps,\delta})\right] = \\ \underbrace{\left[g(X_s^{\eps,\delta})-g(x_s)\right]\left\{\kappa(X_s^{\eps,\delta})-\kappa(X_{\pi_\delta(s)}^{\eps,\delta})\right\}}_{I_1} + \underbrace{g(x_s)\left[\kappa(X_s^{\eps,\delta})-\kappa(X_{\pi_\delta(s)}^{\eps,\delta})\right]}_{I_2}.
\end{multline}
For $I_1,$ using the Lipschitz continuity of $g$ and $\kappa$ from Assumption \ref{A:Smooth-LinearGrow}, we get
\begin{equation*}
\begin{aligned}
|I_1| \le C |X_s^{\eps,\delta}-x_s|\left|X_{\pi_\delta(s)}^{\eps,\delta}-X_s^{\eps,\delta}\right| &\le C |X_s^{\eps,\delta}-x_s|\left(|X_s^{\eps,\delta}-x_s|+\left|X_{\pi_\delta(s)}^{\eps,\delta}-x_s \right|\right)\\
& \le C\left[\sup_{0\le s \le T}\left|X_s^{\eps,\delta}-x_s\right|^2 \right]+ C\left[\sup_{0\le s \le T}\left|X_s^{\eps,\delta}-x_s \right| \left|x_s-x_{\pi_\delta(s)}\right|\right],
\end{aligned}
\end{equation*}
which yields
\begin{equation}\label{E:Approx-Sampling-first-term}
\BE\left[\sup_{0\le s \le T}|I_1|\right]\le C \BE\left[\sup_{0\le s \le T}\left|X_s^{\eps,\delta}-x_s\right|^2 \right]+C \left(\sup_{0\le s \le T}\left|x_s-x_{\pi_\delta(s)}\right|\right)\BE\left[\sup_{0\le s \le T}|X_s^{\eps,\delta}-x_s| \right].
\end{equation}
For $I_2$, using Proposition \ref{T:Taylor-Theorem} (Taylor's formula), we get
\begin{multline*}
\kappa(X_s^{\eps,\delta})=\kappa(X_{\pi_\delta(s)}^{\eps,\delta})+ D\kappa(X_{\pi_\delta(s)}^{\eps,\delta})\left(X_s^{\eps,\delta}-X_{\pi_\delta(s)}^{\eps,\delta}\right)\\
+\frac{1}{2}\left(\left\langle D^2\kappa_1(z_1)\left(X_s^{\eps,\delta}-X_{\pi_\delta(s)}^{\eps,\delta}\right),\left(X_s^{\eps,\delta}-X_{\pi_\delta(s)}^{\eps,\delta}\right) \right\rangle,\dots, \right.\\
\left. \left\langle D^2\kappa_m(z_m)\left(X_s^{\eps,\delta}-X_{\pi_\delta(s)}^{\eps,\delta}\right),\left(X_s^{\eps,\delta}-X_{\pi_\delta(s)}^{\eps,\delta}\right) \right \rangle \right)^\top,
 \end{multline*}
where $z_i$, $1 \le i \le m$, are points on the line segment joining $X_s^{\eps,\delta}$ and $X_{\pi_{\delta}(s)}^{\eps,\delta}$. 
Recalling the notation $f''$ in \eqref{E:prime-notation}, note that the above equation is simply 
 \begin{multline*}
 \kappa(X_s^{\eps,\delta})=\kappa(X_{\pi_\delta(s)}^{\eps,\delta})+ D\kappa(X_{\pi_\delta(s)}^{\eps,\delta})\left(X_s^{\eps,\delta}-X_{\pi_\delta(s)}^{\eps,\delta}\right)\\
 + \frac{1}{2}\left( \kappa_1''\left(z_1;\left\{X_s^{\eps,\delta}-X_{\pi_\delta(s)}^{\eps,\delta}\right\}\right),...,\kappa_m''\left(z_m;\left\{X_s^{\eps,\delta}-X_{\pi_\delta(s)}^{\eps,\delta}\right\}\right) \right)^\top.
\end{multline*}
Writing $D\kappa(X_{\pi_\delta(s)}^{\eps,\delta})$ as $\left(D\kappa(X_{\pi_\delta(s)}^{\eps,\delta})-D\kappa(x_s)+D\kappa(x_s)\right)$ in the right hand side of the above equation, we get
\begin{multline}\label{E:Kappa-Taylor-Rep}
\kappa(X_s^{\eps,\delta})=\kappa(X_{\pi_\delta(s)}^{\eps,\delta})+ \left[D\kappa(X_{\pi_\delta(s)}^{\eps,\delta})-D\kappa(x_s)\right]\left(X_s^{\eps,\delta}-X_{\pi_\delta(s)}^{\eps,\delta}\right)
+D\kappa(x_s)\left(X_s^{\eps,\delta}-X_{\pi_{\delta}(s)}^{\eps,\delta}\right)\\
+\frac{1}{2}\left(\left\langle D^2\kappa_1(z_1)\left(X_s^{\eps,\delta}-X_{\pi_\delta(s)}^{\eps,\delta}\right),\left(X_s^{\eps,\delta}-X_{\pi_\delta(s)}^{\eps,\delta}\right) \right \rangle,\dots, \right.\\
\left. \left\langle D^2\kappa_m(z_m)\left(X_s^{\eps,\delta}-X_{\pi_\delta(s)}^{\eps,\delta}\right),\left(X_s^{\eps,\delta}-X_{\pi_\delta(s)}^{\eps,\delta}\right) \right \rangle \right)^\top,
\end{multline}
Substituting \eqref{E:Kappa-Taylor-Rep} in the expression for $I_2$, and recalling that $\gDk(x_s) \triangleq g(x_s)D\kappa(x_s),$ yields
\begin{multline}\label{E:Sampling-App-I2}
I_2  \triangleq g(x_s)\left[\kappa(X_s^{\eps,\delta})-\kappa(X_{\pi_\delta(s)}^{\eps,\delta})\right]=g(x_s)\left[D\kappa(X_{\pi_\delta(s)}^{\eps,\delta})-D\kappa(x_s)\right]\left(X_s^{\eps,\delta}-X_{\pi_\delta(s)}^{\eps,\delta}\right) \\
+\gDk(x_s)\left(X_s^{\eps,\delta}-X_{\pi_\delta(s)}^{\eps,\delta}\right)
 + \frac{g(x_s)}{2}\left(\left\langle D^2\kappa_1(z_1)\left(X_s^{\eps,\delta}-X_{\pi_\delta(s)}^{\eps,\delta}\right),\left(X_s^{\eps,\delta}-X_{\pi_\delta(s)}^{\eps,\delta}\right) \right \rangle,\dots, \right.\\
\left.  \left\langle D^2\kappa_m(z_m)\left(X_s^{\eps,\delta}-X_{\pi_\delta(s)}^{\eps,\delta}\right),\left(X_s^{\eps,\delta}-X_{\pi_\delta(s)}^{\eps,\delta}\right) \right \rangle \right)^\top.
\end{multline}
We now use the boundedness of $g$ and the second order partial derivatives of $\kappa$ (Assumption \ref{A:Boundedness}). From \eqref{E:SamplingTermRep} and \eqref{E:Sampling-App-I2} and using the definition of $1$-norm, we get
\begin{multline*}
\left|\frac{g(X_s^{\eps,\delta})\left[\kappa(X_s^{\eps,\delta})-\kappa(X_{\pi_\delta(s)}^{\eps,\delta})\right]}{\eps}-\gDk(x_s)\frac{X_s^{\varepsilon, \delta}-X_{\pi_\delta(s)}^{\varepsilon, \delta}}{\eps}\right|\\  \le \frac{|I_1|}{\eps} 
+ \frac{1}{\eps}\left|g(x_s)\left[D\kappa(x_s)-D\kappa(X_{\pi_{\delta}(s)}^{\eps,\delta})\right]\left(X_s^{\eps,\delta}-X_{{\pi_\delta(s)}}^{\eps,\delta}\right)\right|\\
\qquad \qquad \qquad \qquad \qquad \qquad \qquad \quad \quad + \frac{|g(x_s)|}{2\eps}\sum_{i=1}^{m}\left|\left\langle D^2\kappa_i(z_i)\left(X_s^{\eps,\delta}-X_{\pi_\delta(s)}^{\eps,\delta}\right),\left(X_s^{\eps,\delta}-X_{\pi_\delta(s)}^{\eps,\delta}\right)\right \rangle\right|\\
 \le \frac{|I_1|}{\eps} + \frac{C}{\eps}\left|\left(D\kappa(x_s)-D\kappa(X_{\pi_{\delta}(s)}^{\eps,\delta})\right)\left(X_s^{\eps,\delta}-X_{{\pi_\delta(s)}}^{\eps,\delta}\right)\right| \\
\qquad \qquad \qquad \qquad \qquad \qquad \qquad \qquad \qquad \qquad \qquad \qquad \qquad \qquad + \frac{C}{\eps}\sum_{i=1}^{m}|D^2 \kappa_i(z_i)|\left|X_s^{\eps,\delta}-X_{\pi_{\delta}(s)}^{\eps,\delta}\right|^2 \\
 \qquad \qquad \le \frac{|I_1|}{\eps} + \frac{C}{\eps}\left|\left(D\kappa(x_s)-D\kappa(X_{\pi_{\delta}(s)}^{\eps,\delta})\right)\left(X_s^{\eps,\delta}-X_{{\pi_\delta(s)}}^{\eps,\delta}\right)\right|
  + \frac{C}{\eps} \left|X_s^{\eps,\delta}-X_{\pi_{\delta}(s)}^{\eps,\delta}\right|^2.
\end{multline*}
Using Theorem \ref{T:LLN-II} and Lemma \ref{L:Sampling-Difference} in \eqref{E:Approx-Sampling-first-term}, and Lemma \ref{L:Highr-Power-Der-lemma} in the equation above, we easily get the claimed result.
%
%
\end{proof}


\begin{proof}[Proof of Proposition \ref{P:Noise-Term-Est}]
Letting $|\cdot|$ represent the one norm, and denoting the columns of the matrix $\sigma$ by $\sigma_i\in \BR^n$, $1 \le i \le n$, we get
\begin{multline*}
\left|\int_0^t \{\sigma(X_s^{\eps,\delta})-\sigma(x_s)\}dW_s\right| = \left|\sum_{i=1}^{n}\int_0^t \{\sigma_i(X_s^{\eps,\delta})-\sigma_i(x_s)\}dW_s^i\right| \le \sum_{i=1}^{n}  \left|\int_0^t \{\sigma_i(X_s^{\eps,\delta})-\sigma_i(x_s)\}dW_s^i\right|\\
=\sum_{i=1}^{n}\sum_{j=1}^{n}  \left|\int_0^t \{\sigma_{ji}(X_s^{\eps,\delta})-\sigma_{ji}(x_s)\}dW_s^i\right|.
\end{multline*}
Therefore,
\begin{equation*}
\BE\left[\sup_{0 \le t \le T}\left|\int_0^t \{\sigma(X_s^{\eps,\delta})-\sigma(x_s)\}dW_s\right|\right] \le \sum_{i=1}^{n}\sum_{j=1}^{n} \BE \left[ \sup_{0 \le t \le T} \left|\int_0^t \{\sigma_{ji}(X_s^{\eps,\delta})-\sigma_{ji}(x_s)\}dW_s^i\right|\right].
\end{equation*}
Now, using the Burkholder-Davis-Gundy inequalities for the right hand side of the above equation, followed by Jensen's inequality for concave functions, we get
\begin{equation*}
\begin{aligned}
\BE\left[\sup_{0 \le t \le T}\left|\int_0^t \{\sigma(X_s^{\eps,\delta})-\sigma(x_s)\}dW_s\right|\right] & \le C\sum_{i=1}^{n}\sum_{j=1}^{n} \BE \left[ \sqrt{\int_0^T \left\{\sigma_{ji}(X_s^{\eps,\delta})-\sigma_{ji}(x_s)\right\}^2 ds} \right] \\
 & \le C\sum_{i=1}^{n}\sum_{j=1}^{n}\left[\BE \left(\int_0^T \left\{\sigma_{ji}(X_s^{\eps,\delta})-\sigma_{ji}(x_s)\right\}^2 ds \right) \right]^{\frac{1}{2}}.
\end{aligned}
\end{equation*}
Using Lipschitz continuity of $\sigma$ (Assumption \ref{A:Smooth-LinearGrow}), we next have
\begin{equation*}
\BE\left[\sup_{0 \le t \le T}\left|\int_0^t \{\sigma(X_s^{\eps,\delta})-\sigma(x_s)\}dW_s\right|\right] \le C \left[ \BE \left( \int_0^T |X_s^{\eps,\delta}-x_s|^2 ds\right) \right]^{1/2}.
\end{equation*}
Now, using Theorem \ref{T:LLN-II}, we get the required estimate.
\end{proof}

\section{Proofs of Lemmas \ref{L:M_estimates} through \ref{L:M4-Noise-Term-Small}}\label{S:CLT-Lemmas}

\begin{proof}[Proof of Lemma \ref{L:M_estimates}]
For $s\in [0,t],$ exploiting the integral representation for $X_s^{\eps,\delta}$ from \eqref{E:sie}, we get
\begin{equation*}
\begin{aligned}
\frac{X_s^{\eps,\delta}-X^{\eps,\delta}_{\pi_\delta(s)}}{\eps}&=\int_{\pi_\delta(s)}^{s}\frac{f(X_r^{\eps,\delta})+g(X_r^{\eps,\delta})\kappa(X_{\pi_\delta(r)}^{\eps,\delta})}{\eps}\thinspace dr + \int_{\pi_\delta(s)}^{s}\sigma(X_u^{\eps,\delta})dW_u\\
&= \int_{\pi_\delta(s)}^{s} \frac{f(X_r^{\eps,\delta})-f(X_{\pi_\delta(r)}^{\eps,\delta})}{\eps} \thinspace dr + \int_{\pi_\delta(s)}^{s} \frac{f(X_{\pi_\delta(r)}^{\eps,\delta})+g(X_r^{\eps,\delta})\kappa(X_{\pi_\delta(r)}^{\eps,\delta})}{\eps}\thinspace dr \\
& \qquad \qquad \qquad \qquad \qquad \qquad \qquad \qquad \qquad \qquad \qquad \qquad \qquad \quad+ \int_{\pi_\delta(s)}^{s}\sigma(X_u^{\eps,\delta})dW_u.
\end{aligned}
\end{equation*}
Writing $f(X_{\pi_\delta(r)}^{\eps,\delta})+g(X_r^{\eps,\delta})\kappa(X_{\pi_\delta(r)}^{\eps,\delta})$ in the middle term of the above equation as\\ $f(X_{\pi_\delta(r)}^{\eps,\delta})-f(x_{\pi_\delta(s)})+g(X_r^{\eps,\delta})\kappa(X_{\pi_\delta(r)}^{\eps,\delta})+f(x_{\pi_\delta(s)})$, it is easily seen that
\begin{multline*}
\int_0^t \gDk(x_s)\frac{X_s^{\varepsilon, \delta}-X_{\pi_\delta(s)}^{\varepsilon, \delta}}{\eps}\thinspace ds  = \int_0^t \gDk(x_s) f(x_{\pi_\delta(s)})\frac{s-\pi_\delta(s)}{\eps}\thinspace ds + \int_0^t \gDk(x_s)\int_{\pi_\delta(s)}^{s}\frac{g(X_r^{\eps,\delta})\kappa(X_{\pi_\delta(r)}^{\eps,\delta})}{\eps}\thinspace dr \thinspace ds\\
 + \int_0^t \gDk(x_s)\int_{\pi_\delta(s)}^{s}\frac{f(X_r^{\eps,\delta})-f(X_{\pi_\delta(r)}^{\eps,\delta})}{\eps} \thinspace dr \thinspace ds
 + \int_0^t \gDk(x_s) \frac{f(X_{\pi_\delta(s)}^{\varepsilon, \delta})-f(x_{\pi_\delta(s)})}{\eps}(s-\pi_\delta(s))\thinspace ds\\
 + \int_0^t \gDk(x_s)\int_{\pi_\delta(s)}^{s}\sigma(X_u^{\eps,\delta})dW_u \thinspace ds.
\end{multline*}
We now recognize the right hand side as being $\sum_{i=1}^{4} {\sfM}_i^{\eps,\delta}(t)$, where ${\sfM}_i^{\eps,\delta}(t)$, $1 \le i \le 4$, are as in the statement of Lemma \ref{L:M_estimates}.
\end{proof}


\begin{proof}[Proof of Lemma \ref{L:l_estimates}]
Recalling that $\gDk(x)\triangleq g(x)D\kappa(x)$ for $x \in \BR^n$, together with the expression for ${\sf M}_1^{\eps,\delta}(t)$ from Lemma \ref{L:M_estimates}, a little algebra yields
\begin{equation*}
\begin{aligned}
\sfM_1^{\eps,\delta}(t) &= \int_0^t g(x_s)(D\kappa(x_s)-D\kappa(x_{\pi_\delta(s)}))f(x_{\pi_\delta(s)})\frac{s-\pi_\delta(s)}{\eps}\thinspace ds \\
& \qquad \qquad \qquad \qquad \qquad \qquad \qquad \qquad \qquad \qquad \quad  + \int_0^t g(x_s) D\kappa(x_{\pi_\delta(s)})f(x_{\pi_\delta(s)})\frac{s-\pi_\delta(s)}{\eps}\thinspace ds\\
&= \int_0^t g(x_s)(D\kappa(x_s)-D\kappa(x_{\pi_\delta(s)}))f(x_{\pi_\delta(s)})\frac{s-\pi_\delta(s)}{\eps}\thinspace ds \\
&+ \int_0^t (g(x_s)-g(x_{\pi_\delta(s)}))D\kappa(x_{\pi_\delta(s)})
f(x_{\pi_\delta(s)})
\frac{s-\pi_\delta(s)}{\eps}\thinspace ds + \int_0^t \gDk(x_{\pi_\delta(s)})f(x_{\pi_\delta(s)})\frac{s-\pi_\delta(s)}{\eps}\thinspace ds.
\end{aligned}
\end{equation*}
The right hand side is easily recognized to be $\sum_{i=1}^3 l_{i}^{\eps,\delta}(t)$, with $l_{i}^{\eps,\delta}(t)$, $1 \le i \le 3$, defined as in the statement of Lemma \ref{L:l_estimates}.  
\end{proof}


Before proceeding with the proof of Lemma \ref{L:ell_1Approx}, we will find it convenient to prove an auxiliary result, namely, Lemma \ref{L:Remainder-Term-Simplification} below, which enables us to estimate the term 
$\sup_{0 \le t \le T}|R_1^{\eps,\delta}(t) + R_2^{\eps,\delta}(t)|$, where $R_1^{\eps,\delta}(t)$ and $ R_2^{\eps,\delta}(t)$ are defined by
\begin{equation}\label{E: Riemann-Approx}
\begin{aligned}
{R}^{\eps,\delta}_t & \triangleq R_1^{\eps,\delta}(t) + R_2^{\eps,\delta}(t)
\triangleq \frac{1}{2}\frac{\delta}{\eps} \left| \sum_{i=0}^{\lfloor \frac{t}{\delta}\rfloor-1}\delta \thinspace \gDk(x_{i\delta})f(x_{i\delta}) -\int_0^t \gDk(x_s)f(x_s)\thinspace ds\right|\\
& \qquad \qquad \qquad \qquad \qquad \qquad \qquad \qquad \quad + \frac{1}{2}\frac{\delta}{\eps} \left|\sum_{i=0}^{\lfloor \frac{t}{\delta}\rfloor-1} \delta  \thinspace \gDk(x_{i\delta})g(x_{i\delta})\kappa(x_{i\delta}) - \int_0^t \gDk(x_s)g(x_s)\kappa(x_s)\thinspace ds \right|.
\end{aligned}
\end{equation}
Essentially, Lemma \ref{L:Remainder-Term-Simplification} assures us that $R_t^{\eps,\delta}=\mathscr{O}(\delta)$  whenever $0<\eps<\eps_0$. The terms $R_1^{\eps,\delta}(t)$ and $ R_2^{\eps,\delta}(t)$ appear in the proofs of Lemmas \ref{L:ell_1Approx} and \ref{L:ell2Approximation}, respectively.



\begin{lemma}\label{L:Remainder-Term-Simplification}
Let $R_t^{\eps,\delta}$ be defined in \eqref{E: Riemann-Approx}. Then, for any fixed $T>0,$ there exists a positive constant $C_{\ref{L:Remainder-Term-Simplification}}$ such that whenever $0<\eps<\eps_0$,
\begin{equation*}
\sup_{0 \le t \le T}|R_t^{\eps,\delta}|=\sup_{0 \le t \le T}|R_1^{\eps,\delta}(t) + R_2^{\eps,\delta}(t)| \le \delta (\cc+1) C_{\ref{L:Remainder-Term-Simplification}}e^{C_{\ref{L:Remainder-Term-Simplification}}T}.
\end{equation*}
\end{lemma}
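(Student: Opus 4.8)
The plan is to recognize $R_1^{\eps,\delta}(t)$ and $R_2^{\eps,\delta}(t)$ as (rescaled) errors incurred in approximating an integral by its left-endpoint Riemann sum with mesh $\delta$, and then to exploit the fact that such errors are $\mathscr{O}(\delta)$ for integrands that are Lipschitz along the trajectory $s \mapsto x_s$. To this end, I would introduce the shorthand $\phi(s) \triangleq \gDk(x_s)f(x_s)$ and $\psi(s) \triangleq \gDk(x_s)g(x_s)\kappa(x_s)$, so that the bracketed quantities in \eqref{E: Riemann-Approx} are exactly $\sum_{i=0}^{\lfloor t/\delta\rfloor - 1}\delta\,\phi(i\delta) - \int_0^t \phi(s)\,ds$ and the analogous expression with $\psi$. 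The first task is to show that $\phi$ and $\psi$ are bounded and Lipschitz continuous along $x_s$; writing, for instance, $\phi(s) = g(x_s)D\kappa(x_s)f(x_s)$ and adding and subtracting terms in a product decomposition, the boundedness and global Lipschitz continuity of $g$ and $D\kappa$ (the latter from the bounded second-order partials of $\kappa$ in Assumption \ref{A:Boundedness}) together with the Lipschitz property and linear growth of $f$ (Assumption \ref{A:Smooth-LinearGrow}) yield $|\phi(s) - \phi(s')| \le C|x_s - x_{s'}|(1 + |x_s|)$ and $|\phi(s)| \le C(1 + |x_s|)$, with identical estimates for $\psi$. The a priori bound $\sup_{0 \le s \le T}|x_s| \le Ce^{CT}$ from Lemma \ref{L:Nonlinear-Sys-Sampling-Est} then reduces these to $|\phi(s) - \phi(s')| \le Ce^{CT}|x_s - x_{s'}|$ and $|\phi(s)| \le Ce^{CT}$.

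Next I would carry out the standard Riemann-sum-versus-integral estimate. Writing $N \triangleq \lfloor t/\delta\rfloor$ and splitting $\int_0^t = \sum_{i=0}^{N-1}\int_{i\delta}^{(i+1)\delta} + \int_{N\delta}^t$, each full block contributes
\begin{equation*}
\left|\delta\,\phi(i\delta) - \int_{i\delta}^{(i+1)\delta}\phi(s)\,ds\right| \le \int_{i\delta}^{(i+1)\delta}|\phi(i\delta) - \phi(s)|\,ds \le Ce^{CT}\int_{i\delta}^{(i+1)\delta}|x_s - x_{\pi_\delta(s)}|\,ds,
\end{equation*}
and invoking Lemma \ref{L:Sampling-Difference} (which gives $|x_s - x_{\pi_\delta(s)}| \le \delta Ce^{CT}$) bounds each block by $\delta^2 Ce^{CT}$. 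Since $N \le T/\delta$, summing the $N$ blocks gives a total of order $\delta\,Ce^{CT}$, while the terminal fragment $\int_{N\delta}^t \phi$ is controlled by $(t - N\delta)\sup_s|\phi(s)| \le \delta\,Ce^{CT}$, using $t - N\delta < \delta$. Hence the entire bracketed Riemann-sum error for $\phi$ is at most $\delta\,Ce^{CT}$, and the same holds for $\psi$.

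Finally, I would reinstate the prefactor $\tfrac12(\delta/\eps)$: multiplying the $\mathscr{O}(\delta)$ error by $\tfrac12(\delta/\eps)$ produces a bound of order $\delta^2/\eps = \delta\cdot(\delta/\eps)$, and the regime restriction \eqref{E:eps0} (valid since $0 < \eps < \eps_0$) supplies $\delta/\eps < \cc + 1$, yielding $R_1^{\eps,\delta}(t) \le \tfrac12\delta(\cc+1)Ce^{CT}$ and likewise for $R_2^{\eps,\delta}(t)$. Adding the two and taking $\sup_{0 \le t \le T}$ (noting $R_1^{\eps,\delta},R_2^{\eps,\delta} \ge 0$) gives the claim. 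I do not anticipate a genuine obstacle here; the only point requiring real care is the product-rule Lipschitz estimate for $\phi$ and $\psi$, where one must use that $g$ and $D\kappa$ are bounded in order to absorb the linear growth coming from the $f$ and $\kappa$ factors, and then the trajectory bound from Lemma \ref{L:Nonlinear-Sys-Sampling-Est} to make the resulting constants uniform on $[0,T]$.
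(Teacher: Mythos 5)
Your proposal is correct and follows essentially the same route as the paper's proof: the same block-plus-terminal-fragment decomposition of the Riemann-sum error, the same product-rule Lipschitz estimate for the integrand along the trajectory (using boundedness of $g$ and $D\kappa$ to absorb the linear growth of $f$ and $\kappa$), the same invocation of Lemmas \ref{L:Nonlinear-Sys-Sampling-Est} and \ref{L:Sampling-Difference}, and the same final use of \eqref{E:eps0} to convert $\delta/\eps$ into $\cc+1$. The only cosmetic difference is that you derive the Lipschitz continuity of $D\kappa$ directly from the bounded second-order partials in Assumption \ref{A:Boundedness}, whereas the paper routes this through the second estimate of Lemma \ref{L:Kappa-Derivative-Est}, which is proved in exactly that way.
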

\begin{proof}
Let ${\sf h}:\BR^n \rightarrow \BR^n$ be defined by ${\sf h}(x)\triangleq \gDk(x)f(x)=\sum_{i=1}^{n}\gDk_i(x)f_i(x)$ where the matrix $\gDk$ has columns $\gDk_1,\dots,\gDk_n$ with $\gDk_i:\BR^n \rightarrow \BR^n$, and $f_i: \BR^n \rightarrow \BR$ for each $i=1,...,n.$ First, for $R_1^{\eps,\delta}(t)$, we observe that
\begin{equation}\label{E:Rem-Eq}
\begin{aligned}
\left| \sum_{i=0}^{\lfloor \frac{t}{\delta}\rfloor-1}\delta \thinspace {\sf h}(x_{i\delta}) -\int_0^t {\sf h}(x_s)\thinspace ds\right| &\le 
\left|\sum_{i=0}^{\lfloor \frac{t}{\delta}\rfloor-1}\int_{i\delta}^{(i+1)\delta}\left\{{\sf h}(x_s)-{\sf h}(x_{i\delta})\right\}ds\right|+ \left| \int_{\delta\lfloor \frac{t}{\delta}\rfloor}^{t}{\sf h}(x_s)ds\right|\\
& \le \sum_{i=0}^{\lfloor \frac{t}{\delta}\rfloor-1}\int_{i\delta}^{(i+1)\delta}\left|{\sf h}(x_s)-{\sf h}(x_{i\delta})\right|ds+  \int_{\delta\lfloor \frac{t}{\delta}\rfloor}^{t}\left|{\sf h}(x_s)\right|ds.
\end{aligned}
\end{equation}
For $s \in [i\delta,(i+1)\delta)$, $i \in \{0,\dots,\lfloor t/\delta\rfloor -1 \}$, we can now write
\begin{multline*}
{\sf h}(x_s)-{\sf h}(x_{i\delta})
=\sum_{j=1}^{n}\left\{\gDk_j(x_s)f_j(x_s)-\gDk_j(x_s)f_j(x_{i\delta})+\gDk_j(x_s)f_j(x_{i\delta})-\gDk_j(x_{i\delta})f_j(x_{i\delta})\right\}\\=\gDk(x_s)\big[f(x_s)-f(x_{i\delta}) \big] + \left[\gDk(x_s)-\gDk(x_{i\delta})\right]f(x_{i\delta}).
\end{multline*}
Noting that $\left[\gDk(x_s)-\gDk(x_{i\delta})\right]=\left[g(x_s)-g(x_{i\delta})\right]D\kappa(x_s)+g(x_{i\delta})\left[D\kappa(x_s)-D\kappa(x_{i\delta})\right]$, the above equation yields
\begin{multline*}
|{\sf h}(x_s)-{\sf h}(x_{i\delta})| \le \big|\gDk(x_s)\left[f(x_s)-f(x_{i\delta}) \right]\big| \\ + \big\{\big|\left\{g(x_s)-g(x_{i\delta})\right\}D\kappa(x_s)\big| + \big|g(x_{i\delta})\left\{D\kappa(x_s)-D\kappa(x_{i\delta})\right\}\big|\big\}|f(x_{i\delta})|.
\end{multline*}
Using Lipschitz continuity of $f,g$ (Assumption \ref{A:Smooth-LinearGrow}), the boundedness of $g$ and the partial derivatives of $\kappa$ (Assumption \ref{A:Boundedness}), and the second line of equation \eqref{E:Kappa-Derivative-Est} in Lemma \ref{L:Kappa-Derivative-Est}, we get
$|{\sf h}(x_s)-{\sf h}(x_{i\delta})| \le C|x_s-x_{i\delta}|+ C\left(\delta + |x_s-x_{i\delta}|\right)|f(x_{i\delta})|.$ 
Now, using the linear growth property of $f$  (Assumption \ref{A:Smooth-LinearGrow}), and Lemmas \ref{L:Nonlinear-Sys-Sampling-Est} and \ref{L:Sampling-Difference}, we have $|{\sf h}(x_s)-{\sf h}(x_{i\delta})| \le \delta Ce^{CT}$. Combining this estimate with equation \eqref{E:Rem-Eq}, we get $| \sum_{i=0}^{\lfloor {t}/{\delta}\rfloor-1}\delta \thinspace {\sf h}(x_{i\delta}) -\int_0^t {\sf h}(x_s)\thinspace ds| \le \delta Ce^{CT}$. Recalling \eqref{E:eps0}, and the definition of $R_1^{\eps,\delta}(t)$, we obtain $\sup_{0 \le t \le T}|R_1^{\eps,\delta}(t)|\le \delta (\cc+1) Ce^{CT}.$ Similar calculations enable us to get the desired bound for $R_2^{\eps,\delta}(t)$, thereby completing the proof.
\end{proof}


\begin{proof}[Proof of Lemma \ref{L:ell_1Approx}]
Recalling $l_1^{\eps,\delta}(t)$ from Lemma \ref{L:l_estimates}, we have
\begin{multline*}
l_1^{\eps,\delta}(t)-\ell_1(t)= \frac{1}{\eps}\sum_{i=0}^{\lfloor \frac{t}{\delta}\rfloor-1}\int_{i\delta}^{(i+1)\delta} \gDk(x_{i\delta})f(x_{i\delta})[s-\pi_{\delta}(s)]\thinspace ds - \frac{1}{2}\thinspace \cc \int_0^t \gDk(x_s)f(x_s)\thinspace ds\\
+ \frac{1}{\eps}\int_{\delta\lfloor \frac{t}{\delta}\rfloor}^{t}\gDk(x_{\pi_\delta(t)})f(x_{\pi_\delta(t)})[s-\pi_\delta(s)]\thinspace ds.
\end{multline*}
Noting that $\int_{i\delta}^{(i+1)\delta} \gDk(x_{i\delta})f(x_{i\delta})[s-\pi_{\delta}(s)]\thinspace ds=\gDk(x_{i\delta})f(x_{i\delta})(\delta^2/2)$, we have
\begin{equation*}
\begin{aligned}
|l_1^{\eps,\delta}(t)-\ell_1(t)|& \le \left|\frac{1}{2}\frac{\delta}{\eps}\sum_{i=0}^{\lfloor \frac{t}{\delta}\rfloor-1}\delta \thinspace \gDk(x_{i\delta})f(x_{i\delta})-\frac{1}{2}\thinspace \left(\cc -\frac{\delta}{\eps}+\frac{\delta}{\eps}\right) \int_0^t \gDk(x_s)f(x_s)\thinspace ds\right|+ \delta \thinspace|\gDk(x_{\pi_\delta(t)})f(x_{\pi_\delta(t)})|\frac{\delta}{\eps}\\
& \le \left| \frac{1}{2}\frac{\delta}{\eps}\sum_{i=0}^{\lfloor \frac{t}{\delta}\rfloor-1}\delta \thinspace \gDk(x_{i\delta})f(x_{i\delta}) -\frac{1}{2}\frac{\delta}{\eps}\int_0^t \gDk(x_s)f(x_s)\thinspace ds \right|+ \frac{1}{2}\left|\frac{\delta}{\eps}-\cc \right|\left|\int_0^t \gDk(x_s)f(x_s)\thinspace ds\right|\\
& \qquad \qquad \qquad \qquad \qquad \qquad \qquad \qquad \qquad \qquad \qquad \qquad \qquad+\delta \thinspace \left|\gDk(x_{\pi_\delta(t)})f(x_{\pi_\delta(t)})\right|\frac{\delta}{\eps}.
\end{aligned}
\end{equation*}
Now, using the linear growth condition for $f$ from Assumption \ref{A:Smooth-LinearGrow}, and the boundedness of $g$ and first partial derivatives
of $\kappa$ from Assumption \ref{A:Boundedness}, we get
\begin{equation*}
\begin{aligned}
|l_1^{\eps,\delta}(t)-\ell_1(t)|& \le \frac{1}{2}\frac{\delta}{\eps} \left| \sum_{i=0}^{\lfloor \frac{t}{\delta}\rfloor-1}\delta \thinspace \gDk(x_{i\delta})f(x_{i\delta}) -\int_0^t \gDk(x_s)f(x_s)\thinspace ds\right|+\left( \frac{\delta^2}{\eps}+\frac{\varkappa(\eps)T}{2}\right)\left(1+\sup_{0\le t \le T}|x(t)|\right)C.
\end{aligned}
\end{equation*} 
Taking supremum on both sides of the above equation and using Lemmas \ref{L:Nonlinear-Sys-Sampling-Est} and \ref{L:Remainder-Term-Simplification}, we get the required result.
\end{proof}

\begin{proof}[Proof of Lemma \ref{L:l2Andl3_Estimates}]
Using the boundedness of $g$ and the partial derivatives of $\kappa$ from Assumption \ref{A:Boundedness}, linear growth condition on $f$ from Assumption \ref{A:Smooth-LinearGrow}, and Lemmas \ref{L:Nonlinear-Sys-Sampling-Est}, \ref{L:Sampling-Difference}, we get
\begin{equation*}
\begin{aligned}
|l_2^{\eps,\delta}(t)| &\le \frac{\delta}{\eps}\int _0^t \left|D\kappa(x_s)-D\kappa(x_{\pi_\delta(s)})\right||g(x_s)||f(x_{\pi_\delta(s)})|\thinspace ds 
 \le \frac{\delta}{\eps}Ce^{CT}\int_0^t\left|D\kappa(x_s)-D\kappa(x_{\pi_\delta(s)})\right|ds,\\
|l_3^{\eps,\delta}(t)| & \le \frac{\delta}{\eps} \int_0^t \left|g(x_s)-g(x_{\pi_\delta(s)})\right||D\kappa(x_{\pi_\delta(s)})||
f(x_{\pi_\delta(s)})|ds \le \frac{\delta}{\eps} Ce^{CT}\int_0^t \left|g(x_s)-g(x_{\pi_\delta(s)})\right|ds. 
\end{aligned}
\end{equation*}
Taking the supremum on both sides, we now use the Lipschitz continuity of $g$ (Assumption \ref{A:Smooth-LinearGrow}), equation \eqref{E:eps0}, and the second line of equation \eqref{E:Kappa-Derivative-Est} in Lemma \ref{L:Kappa-Derivative-Est} to get the required estimate.
\end{proof}


\begin{proof}[Proof of Lemma \ref{L:L_1_Estimate}]
Recalling equation \eqref{E:L_1AndL_2Estimates}, we easily see that 
\begin{equation*}
\begin{aligned}
L_1^{\eps,\delta}(t)&= \int_0^t \gDk(x_s)\int_{\pi_\delta(s)}^{s} \frac{g(X_r^{\eps,\delta})-g(x_r)+g(x_r)-g(x_{\pi_\delta(r)})}{\eps}\left[\kappa(X_{\pi_\delta(r)}^{\eps,\delta})-\kappa(x_{\pi_\delta(r)})+\kappa(x_{\pi_\delta(r)})\right] \thinspace dr \thinspace ds\\
& = \sum_{i=1}^4 \sfN_i^{\eps,\delta}(t), \quad \text{where}
\end{aligned}
\end{equation*}
\begin{equation*}
\begin{aligned}
{\sf N}_1^{\eps,\delta}(t) &\triangleq \int_0^t \gDk(x_s)\int_{\pi_\delta(s)}^{s} \frac{g(X_r^{\eps,\delta})-g(x_r)}{\eps}\left(\kappa(X_{\pi_\delta(r)}^{\eps,\delta})-\kappa(x_{\pi_\delta(r)})\right)\thinspace dr \thinspace ds, \\
{\sf N}_2^{\eps,\delta}(t) &\triangleq \int_0^t \gDk(x_s)\int_{\pi_\delta(s)}^{s}\frac{g(X_r^{\eps,\delta})-g(x_r)}{\eps}\kappa(x_{\pi_\delta(r)})\thinspace dr \thinspace ds,\\
{\sf N}_3^{\eps,\delta}(t) &\triangleq \int_0^t \gDk(x_s)\int_{\pi_\delta(s)}^{s}\frac{g(x_r)-g(x_{\pi_\delta(r)})}{\eps}\left(\kappa(X_{\pi_\delta(r)}^{\eps,\delta})-\kappa(x_{\pi_\delta(r)})\right)\thinspace dr \thinspace ds,\\
  {\sf N}_4^{\eps,\delta}(t) &\triangleq \int_0^t \gDk(x_s)\int_{\pi_\delta(s)}^{s}\frac{g(x_r)-g(x_{\pi_\delta(r)})}{\eps}\kappa(x_{\pi_\delta(r)})\thinspace dr \thinspace ds. \\
\end{aligned}
\end{equation*}
Recalling the boundedness of $g$ and the partial derivatives of $\kappa$ from Assumption \ref{A:Boundedness}, and the Lipschitz condition on $\kappa$ from Assumption \ref{A:Smooth-LinearGrow}, we get 
$|{\sf N}_1^{\eps,\delta}(t)|\le \frac{1}{\eps}C\int_0^t \int_{\pi_\delta(s)}^{s}\sup_{0\le r \le s}|X_r^{\eps,\delta}-x_r|\thinspace dr \thinspace ds$. Now, by Theorem \ref{T:LLN-II}, we get
\begin{equation*}
\begin{aligned}
\BE\left[\sup_{0\le t \le T}|{\sf N}_1^{\eps,\delta}(t)|\right] & \le \frac{1}{\eps}C\int_0^T \int_{\pi_\delta(s)}^{s}\BE \left[\sup_{0\le r \le s}|X_r^{\eps,\delta}-x_r|\right]\thinspace dr \thinspace ds
 \le \frac{\delta}{\eps}(\eps\sqrt{n}+\delta)Ce^{CT}. 
\end{aligned}
\end{equation*}
Using the same argument for ${\sf N}_3^{\eps,\delta}(t),$ we get
 $ \BE\left[\sup_{0\le t \le T}|{\sf N}_3^{\eps,\delta}(t)|\right] \le \frac{\delta}{\eps}(\eps\sqrt{n}+\delta)Ce^{CT}.$
For ${\sf N}_2^{\eps,\delta}(t)$, using the Lipschitz continuity of $g$ and the linear growth property of $\kappa$ from Assumption \ref{A:Smooth-LinearGrow}, together with the boundedness of $g$ and the partial derivatives of $\kappa$ from Assumption \ref{A:Boundedness}, we get 
$|{\sf N}_2^{\eps,\delta}(t)|\le \frac{C}{\eps}\int_0^t  \int_{\pi_\delta(s)}^{s}\displaystyle \sup_{0\le r \le s}|X_r^{\eps,\delta}-x_r| dr  ds.$
Recalling Theorem \ref{T:LLN-II} and Lemma \ref{L:Nonlinear-Sys-Sampling-Est}, we have
\begin{equation*}
\begin{aligned}
\BE\left[\sup_{0\le t \le T}|{\sf N}_2^{\eps,\delta}(t)|\right]  \le \frac{1}{\eps}C\int_0^T \int_{\pi_\delta(s)}^{s}\BE\left[\sup_{0\le r \le s}|X_r^{\eps,\delta}-x_r|\right]\thinspace dr \thinspace ds
 \le \frac{\delta}{\eps}(\eps\sqrt{n}+\delta)Ce^{CT}.
\end{aligned}
\end{equation*}
Finally, for ${\sf N}_4^{\eps,\delta}(t),$ we recall Lemmas \ref{L:Nonlinear-Sys-Sampling-Est} and \ref{L:Sampling-Difference}, and use once again the Lipschitz continuity of $g$ and the linear growth property of $\kappa$ (Assumption \ref{A:Smooth-LinearGrow}), together with the boundedness of $g$ and the partial derivatives of $\kappa$ (Assumption \ref{A:Boundedness}) to obtain
$|{\sf N}_4^{\eps,\delta}(t)|\le \frac{1}{\eps}C\left(1+\sup_{0\le t \le T}|x(t)|\right)\int_0^t \int_{\pi_\delta(s)}^{s}|x_r-x_{\pi_\delta(r)}|\thinspace dr \thinspace ds 
 \le \frac{\delta^2}{\eps}Ce^{CT}.$
Putting together the estimates for ${\sf N}_i^{\eps,\delta}(t)$, $1 \le i \le 4$, and using \eqref{E:eps0}, we get the required estimate.
\end{proof}


\begin{proof}[Proof of Lemma \ref{L:G1-Est}]
Recalling the boundedness of $g$ (Assumption \ref{A:Boundedness}) and the Lipschitz continuity of $\kappa$ (Assumption \ref{A:Smooth-LinearGrow}), the result follows from Theorem \ref{T:LLN-II} and equation \eqref{E:eps0}.
\end{proof}


\begin{proof}[Proof of Lemma \ref{L:P_estimates}]
It is easily seen that $G_2^{\eps,\delta}(t) = I_1+I_2,$ where $I_1$ and $I_2$ are defined as follows:
\begin{equation*}
\begin{aligned}
I_1 &\triangleq \frac{1}{\eps}\int_0^t g(x_s)\left[D\kappa(x_s)-D\kappa(x_{\pi_\delta(s)})\right]g(x_{\pi_\delta(s)})\int_{\pi_\delta(s)}^{s}\kappa(x_{\pi_\delta(r)})\thinspace dr \thinspace ds 
 \quad \text{and}\\
I_2 &\triangleq \frac{1}{\eps}\int_0^t g(x_s)D\kappa(x_{\pi_\delta(s)})g(x_{\pi_\delta(s)})\int_{\pi_\delta(s)}^{s}\kappa(x_{\pi_\delta(r)})\thinspace dr \thinspace ds.
\end{aligned}
\end{equation*}
Setting ${\sf P}_{1}^{\eps,\delta}(t)=I_1$, and decomposing $I_2$ in two parts, we get
\begin{multline*}
G_2^{\eps,\delta}(t)={\sf P}_{1}^{\eps,\delta}(t)+\frac{1}{\eps}\int_0^t \gDk(x_{\pi_\delta(s)})g(x_{\pi_\delta(s)})\int_{\pi_\delta(s)}^{s}\kappa(x_{\pi_\delta(r)})\thinspace dr \thinspace ds\\
+ \frac{1}{\eps}\int_0^t\left[g(x_s)-g(x_{\pi_\delta(s)})\right] D\kappa(x_{\pi_\delta(s)})g(x_{\pi_\delta(s)})\int_{\pi_\delta(s)}^{s}\kappa(x_{\pi_\delta(r)})\thinspace dr \thinspace ds.
\end{multline*}
The last two terms on the right hand side are easily recognized to be ${\sf P}_{2}^{\eps,\delta}(t)$ and ${\sf P}_{3}^{\eps,\delta}(t)$, respectively.
\end{proof}


\begin{proof}[Proof of Lemma \ref{L:ell2Approximation}] For any $t\ge 0$, recalling the expressions for ${\sf P}_2^{\eps,\delta}(t)$ and $\ell_2(t)$ from Lemma \ref{L:P_estimates} and equation \eqref{E:ell_1-and-ell_2}, we have 
\begin{multline*}
\sfP_2^{\eps,\delta}(t)-\ell_2(t) = \frac{1}{\eps}\sum_{i=0}^{\lfloor \frac{t}{\delta}\rfloor-1}\int_{i\delta}^{(i+1)\delta} \gDk(x_{i\delta})g(x_{i\delta})\kappa(x_{i\delta})[s-i \delta]\thinspace ds - \frac{1}{2} \cc \int_0^t \gDk(x_s)g(x_s)\kappa(x_s) \thinspace ds\\
 + \frac{1}{\eps}\int_{\delta\lfloor \frac{t}{\delta}\rfloor}^{t}\gDk(x_{\pi_\delta(t)})g(x_{\pi_\delta(t)})\kappa(x_{\pi_\delta(t)}){[s-\pi_\delta(s)]}\thinspace ds.
\end{multline*}
Noting that $\int_{i\delta}^{(i+1)\delta} \gDk(x_{i\delta})g(x_{i\delta})\kappa(x_{i\delta})[s-i \delta]\thinspace ds=\gDk(x_{i\delta})g(x_{i\delta})\kappa(x_{i\delta})(\delta^2/2)$, we now easily get
\begin{multline*}
|\sfP_2^{\eps,\delta}(t)-\ell_2(t)|   \le \thinspace \left|\frac{1}{2}\frac{\delta}{\eps}\sum_{i=0}^{\lfloor \frac{t}{\delta}\rfloor-1} \delta  \thinspace \gDk(x_{i\delta})g(x_{i\delta})\kappa(x_{i\delta})-\frac{1}{2}\left(\cc-\frac{\delta}{\eps}+\frac{\delta}{\eps} \right)  \int_0^t \gDk(x_s)g(x_s)\kappa(x_s) \thinspace ds\right|\\ 
+ \delta \thinspace \left|\gDk(x_{\pi_\delta(t)})g(x_{\pi_\delta(t)})\kappa(x_{\pi_\delta(t)})\right|\frac{\delta}{\eps}\\
 \le \frac{1}{2}\frac{\delta}{\eps} \left|\sum_{i=0}^{\lfloor \frac{t}{\delta}\rfloor-1} \delta  \thinspace \gDk(x_{i\delta})g(x_{i\delta})\kappa(x_{i\delta}) - \int_0^t \gDk(x_s)g(x_s)\kappa(x_s)\thinspace ds \right| + \frac{1}{2}\left|\frac{\delta}{\eps}-\cc\right| \left| \int_0^t \gDk(x_s)g(x_s)\kappa(x_s) \thinspace ds\right| \\
+ \delta \thinspace \left|\gDk(x_{\pi_\delta(t)})g(x_{\pi_\delta(t)})\kappa(x_{\pi_\delta(t)})\right|\frac{\delta}{\eps}.
\end{multline*}
Recalling Lemma \ref{L:Nonlinear-Sys-Sampling-Est}, we now use the linear growth condition on $\kappa$ from Assumption \ref{A:Smooth-LinearGrow}, together with the  boundedness of $g$ and the partial derivatives of $\kappa$ from Assumption \ref{A:Boundedness}, to get
\begin{equation*}
\begin{aligned}
|\sfP_2^{\eps,\delta}(t)-\ell_2(t)| \le \frac{1}{2}\frac{\delta}{\eps} \left|\sum_{i=0}^{\lfloor \frac{t}{\delta}\rfloor-1} \delta  \thinspace \gDk(x_{i\delta})g(x_{i\delta})\kappa(x_{i\delta}) - \int_0^t \gDk(x_s)g(x_s)\kappa(x_s)\thinspace ds \right|+ \left(\frac{\varkappa(\eps)T}{2}+ \frac{\delta^2}{\eps}\right)Ce^{CT}.
\end{aligned}
\end{equation*}
Taking supremum on both sides, the required estimate is obtained from equation \eqref{E:eps0} and Lemma \ref{L:Remainder-Term-Simplification}.
 \end{proof}


\begin{proof}[Proof of Lemma \ref{L:P1AndP3Estimates}]
Recalling the expressions for ${\sf P}_{1}^{\eps,\delta}(t)$, ${\sf P}_{3}^{\eps,\delta}(t)$ from Lemma \ref{L:P_estimates}, we use the linear growth condition on $\kappa$ from Assumption \ref{A:Smooth-LinearGrow} and boundedness of $g$ from Assumption \ref{A:Boundedness},
to get
\begin{equation*}
|{\sf P}_{1}^{\eps,\delta}(t)|+|{\sf P}_{3}^{\eps,\delta}(t)|\le \frac{\delta}{\eps}C\left(1+\sup_{0\le t \le T}|x(t)|\right)\left[\int_0^t \left\{|D\kappa(x_s)-D\kappa(x_{\pi_\delta(s)})|+|g(x_s)-g(x_{\pi_\delta(s)})| \right\} \thinspace ds \right]. 
\end{equation*}
Lemmas \ref{L:Nonlinear-Sys-Sampling-Est}, \ref{L:Sampling-Difference}, together with the Lipschitz continuity of $g$ (Assumption \ref{A:Smooth-LinearGrow}) and the second line in equation \eqref{E:Kappa-Derivative-Est}, yield the desired result.
\end{proof}




\begin{proof}[Proof of Lemma \ref{L:M3_Estimate}]
The expression for ${\sf M}_3^{\eps,\delta}(t)$ from equation \eqref{E:M1234} in Lemma \ref{L:M_estimates} can be written $\sfM_3^{\eps,\delta}(t)=Q_1^{\eps,\delta}(t)+Q_2^{\eps,\delta}(t)$, where
\begin{equation*}
\begin{aligned}
Q_1^{\eps,\delta}(t) &\triangleq \int_0^t \gDk(x_s)\int_{\pi_\delta(s)}^{s}\frac{f(X_r^{\eps,\delta})-f(X_{\pi_\delta(r)}^{\eps,\delta})}{\eps}\thinspace dr \thinspace ds,\\
 Q_2^{\eps,\delta}(t) &\triangleq \int_0^t \gDk(x_s) \frac{f(X_{\pi_\delta(s)}^{\varepsilon, \delta})-f(x_{\pi_\delta(s)})}{\eps}(s-\pi_\delta(s)) \thinspace ds.
\end{aligned}
\end{equation*}
Using the Lipschitz continuity of $f$ from Assumption \ref{A:Smooth-LinearGrow}, we get
\begin{equation*}
\begin{aligned}
|Q_1^{\eps,\delta}(t)|&  
\le \frac{C}{\eps}\int_0^t|\gDk(x_s)|\int_{\pi_\delta(s)}^{s}\sup_{0\le r \le s}|X_r^{\eps,\delta}-X_{\pi_\delta(r)}^{\eps,\delta}|\thinspace dr \thinspace ds\\
& \le \frac{C}{\eps}\int_0^t|\gDk(x_s)|\int_{\pi_\delta(s)}^{s}\sup_{0\le r \le s}|X_r^{\eps,\delta}-x_r+x_r-x_{\pi_\delta(r)}+x_{\pi_\delta(r)}-X_{\pi_\delta(r)}^{\eps,\delta}|\thinspace dr \thinspace ds\\
& \le \frac{C}{\eps}\int_0^t\int_{\pi_\delta(s)}^{s}\sup_{0\le r \le s}|X_r^{\eps,\delta}-x_r|\thinspace dr \thinspace ds+ \frac{C}{\eps}\int_0^t\int_{\pi_\delta(s)}^{s}\sup_{0\le r \le s}|x_r-x_{\pi_\delta(r)}|\thinspace dr \thinspace ds,
\end{aligned}
\end{equation*}
where the last line uses boundedness of $g$ and partial derivatives of $\kappa$ from Assumption \ref{A:Boundedness} (recall that $\gDk(x_s)\triangleq g(x_s)D\kappa(x_s)$).
Hence,
\begin{multline*}
\BE\left[\sup_{0\le t \le T}|Q_1^{\eps,\delta}(t)|\right] \le \frac{C}{\eps}\left\{\int_0^T \int_{\pi_\delta(s)}^{s} \BE\left[ \sup_{0\le r \le s}|X_r^{\eps,\delta}-x_r|\right] dr  ds
+ \int_0^T \int_{\pi_\delta(s)}^{s}\sup_{0\le r \le s}|x_r-x_{\pi_\delta(r)}| dr ds \right\}.
\end{multline*}
Theorem \ref{T:LLN-II} and Lemma \ref{L:Sampling-Difference} now yield 
$\BE\left[\sup_{0\le t \le T}|Q_1^{\eps,\delta}(t)|\right] \le \frac{\delta}{\eps}\{(\eps \sqrt{n}+\delta)T+\delta\}Ce^{CT}$.
Using the Lipschitz continuity of $f$ (Assumption \ref{A:Smooth-LinearGrow}), and the boundedness of $g$ and the partial derivatives of $\kappa$ (Assumption \ref{A:Boundedness}), it is easy to show that
$\BE\left[\sup_{0\le t \le T}|Q_2^{\eps,\delta}(t)|\right]\le \frac{\delta}{\eps}(\eps \sqrt{n}+\delta)Ce^{CT}$. Now, putting the $Q_1^{\eps,\delta}(t)$ and $Q_2^{\eps,\delta}(t)$ estimates together and employing \eqref{E:eps0}, the claim easily follows.
\end{proof}


\begin{proof}[Proof of Lemma \ref{L:M4-Noise-Term-Small}]
Recalling equation \eqref{E:M1234}, we have ${\sf M}_4^{\eps,\delta}(t)=\int_0^t \gDk(x_s) \int_{\pi_\delta(s)}^{s}\sigma(X_u^{\eps,\delta})dW_u \thinspace ds$. 
Using H$\ddot{\text{o}}$lder's inequality for ${\sf M}_4^{\eps,\delta}(t),$ and boundedness of $g$ and the partial derivatives of $\kappa$ (Assumption \ref{A:Boundedness}), we get
\begin{equation*}
|{\sf M}_4^{\eps,\delta}(t)|\le \left(\int_0^t |\gDk(x_s)|^2 ds\right)^{\frac{1}{2}}\left(\int_0^t \left|\int_{\pi_{\delta}(s)}^{s}\sigma(X_u^{\eps,\delta})dW_u\right|^2ds\right)^{\frac{1}{2}} \le C\left(\int_0^T \left|\int_{\pi_{\delta}(s)}^{s}\sigma(X_u^{\eps,\delta})dW_u\right|^2ds\right)^{\frac{1}{2}}.
\end{equation*}
Taking the supremum over $t \in [0,T]$, followed by expectation, we get 
\begin{equation}\label{E:M4-CLT-I}
\BE\left[\sup_{0\le t \le T}|{\sf M}_4^{\eps,\delta}(t)|\right]\le
C\BE\sqrt{\int_0^T \left|\int_{\pi_{\delta}(s)}^{s}\sigma(X_u^{\eps,\delta})dW_u\right|^2ds} \le 
C\left(\int_0^T \BE \left|\int_{\pi_{\delta}(s)}^{s}\sigma(X_u^{\eps,\delta})dW_u\right|^2ds\right)^{\frac{1}{2}},
\end{equation}
where we have used Jensen's inequality for concave functions.
Letting $\sigma_i \in \BR^n$, $1 \le i \le n$, represent the columns of the matrix $\sigma$, we use the Ito isometry for stochastic integrals to get
\begin{equation*}
\begin{aligned}
\BE \left|\int_{\pi_{\delta}(s)}^{s}\sigma(X_u^{\eps,\delta})dW_u\right|^2 
&=\BE \left|\sum_{i=1}^{n}\int_{\pi_{\delta}(s)}^{s}\sigma_i(X_u^{\eps,\delta})dW_u^i\right|^2
\le C\BE \left[\sum_{i=1}^{n}\left|\int_{\pi_{\delta}(s)}^{s}\sigma_i(X_u^{\eps,\delta})dW_u^i \right|^2 \right]\\
&\le C\sum_{i=1}^{n} \sum_{j=1}^{n} \BE \left| \int_{\pi_{\delta}(s)}^{s}\sigma_{ji}(X_u^{\eps,\delta})dW_u^i\right|^2
= C\sum_{i=1}^{n} \sum_{j=1}^{n} \BE \int_{\pi_{\delta}(s)}^{s}\sigma_{ji}^2(X_u^{\eps,\delta})du. 
\end{aligned}
\end{equation*}
Now employing the linear growth assumption on $\sigma$ (Assumption \ref{A:Smooth-LinearGrow}), we have
\begin{equation}\label{E:M4-CLT-II}
\begin{aligned}
\BE \left|\int_{\pi_{\delta}(s)}^{s}\sigma(X_u^{\eps,\delta})dW_u\right|^2 \le C \int_{\pi_{\delta}(s)}^{s}(1+\BE|X_u^{\eps,\delta}|^2) du 
& \le C\int_{\pi_{\delta}(s)}^{s} \left( 1+ |x_u|^2 +\BE \left[\sup_{0 \le u \le T} |X_u^{\eps,\delta}-x_u|^2 \right]  \right)du.
\end{aligned}
\end{equation}
Using this last expression in equation \eqref{E:M4-CLT-I}, and then using Theorem \ref{T:LLN-II} and Lemma \ref{L:Nonlinear-Sys-Sampling-Est}, we get the desired result.
\end{proof}




\section{Numerical Example and Simulation}\label{S:NumExa}
Our goal in this section is to illustrate, using numerical simulations, our main result Theorem \ref{T:fluctuations-R-1-2}. The chosen example will be the inverted pendulum \cite{aracil2004inverted,aastrom2005new,aracil2008family} whose dynamics are governed by
\begin{equation*}
\begin{aligned}
\dot{x}_1 &=x_2, \\
\dot{x}_2 &=\sin x_1- (\cos x_1) u.
\end{aligned}
\end{equation*}
Here, $x_1$ and $x_2$ represent the angular position and velocity of the pendulum, respectively, while $u$ denotes the control input. We will use the control law $u=\kappa(x_1,x_2)$ where $\kappa(x_1,x_2) \triangleq 2 \sin x_1 + 1.35 x_2 \cos^2 x_1 -0.22 x_2 \cos x_1$. Our choice here is inspired by the law used in \cite{aracil2004inverted,aastrom2005new,aracil2008family} for stabilization of the inverted pendulum, modified to be consistent with Assumption \ref{A:Smooth-LinearGrow} (Lipschitz continuity and linear growth).\footnote{In \cite{aracil2004inverted,aastrom2005new,aracil2008family}, the authors use $\kappa'(x_1,x_2) \triangleq 2 \sin x_1 + 0.45 x_2^3 \cos x_1 + 1.35 x_2 \cos^2 x_1 -0.67 x_2 \cos x_1$. Our choice of $\kappa$ can be obtained from $\kappa^\prime$ by replacing $x_2^3$ by $x_2$ in the $0.45 x_2^3 \cos x_1$ term.}
%
Setting $x=(x_1,x_2)\in \BR^2$, and comparing with \eqref{E:nonlin-sys}, we have 
\begin{equation*}
f(x)=(x_2,\sin x_1)^\top, \quad g(x)=(0,-\cos x_1)^\top, \quad \kappa(x)= 2 \sin x_1 + 1.35 x_2 \cos^2 x_1 -0.22 x_2 \cos x_1.
\end{equation*}
For simplicity, we take $\sigma$ to be the $2\times 2$ identity matrix. Working in Regime 2, the equations for $X^{\eps,\delta}_t, $ $x(t)$, and $Z_t$ are now given by \eqref{E:sie}, \eqref{E:nonlin-sys} and \eqref{E:lim-fluct-R-1-2}, with $f,g,\kappa,\sigma$ as above. 

We now use the Euler-Maruyama method \cite{Higham2001,KloedenPlaten} to generate 1000 sample paths of $X^{\eps,\delta}_t$ and $Z_t$. Since our result on approximating $X^{\eps,\delta}_t$ by $S_t^{\eps}\triangleq x(t)+\eps Z(t)$ is valid in a path-wise sense, we use the same Brownian increments for $X^{\eps,\delta}_t$ and $Z_t$. Fixing the parameters $\Delta t=0.0977, T=25, \delta= 2^{-4}$, and with initial conditions $X_1^{\eps,\delta}(0)=x_1(0)=1, X_2^{\eps,\delta}(0)=x_2(0)=0$ and $Z_1(0)=Z_2(0)=0,$ the comparison of sample paths of the processes $X_t^{\eps,\delta}$ and $S_t^{\eps}\triangleq x(t)+\eps Z(t)$ for the values $\eps= 2^{-5}$ and $\eps= 2^{-3}$ is demonstrated in Figure \ref{fig:test}.

\begin{figure}
\centering
\begin{subfigure}{.5\textwidth}
  \centering
  \includegraphics[width=.97\linewidth]{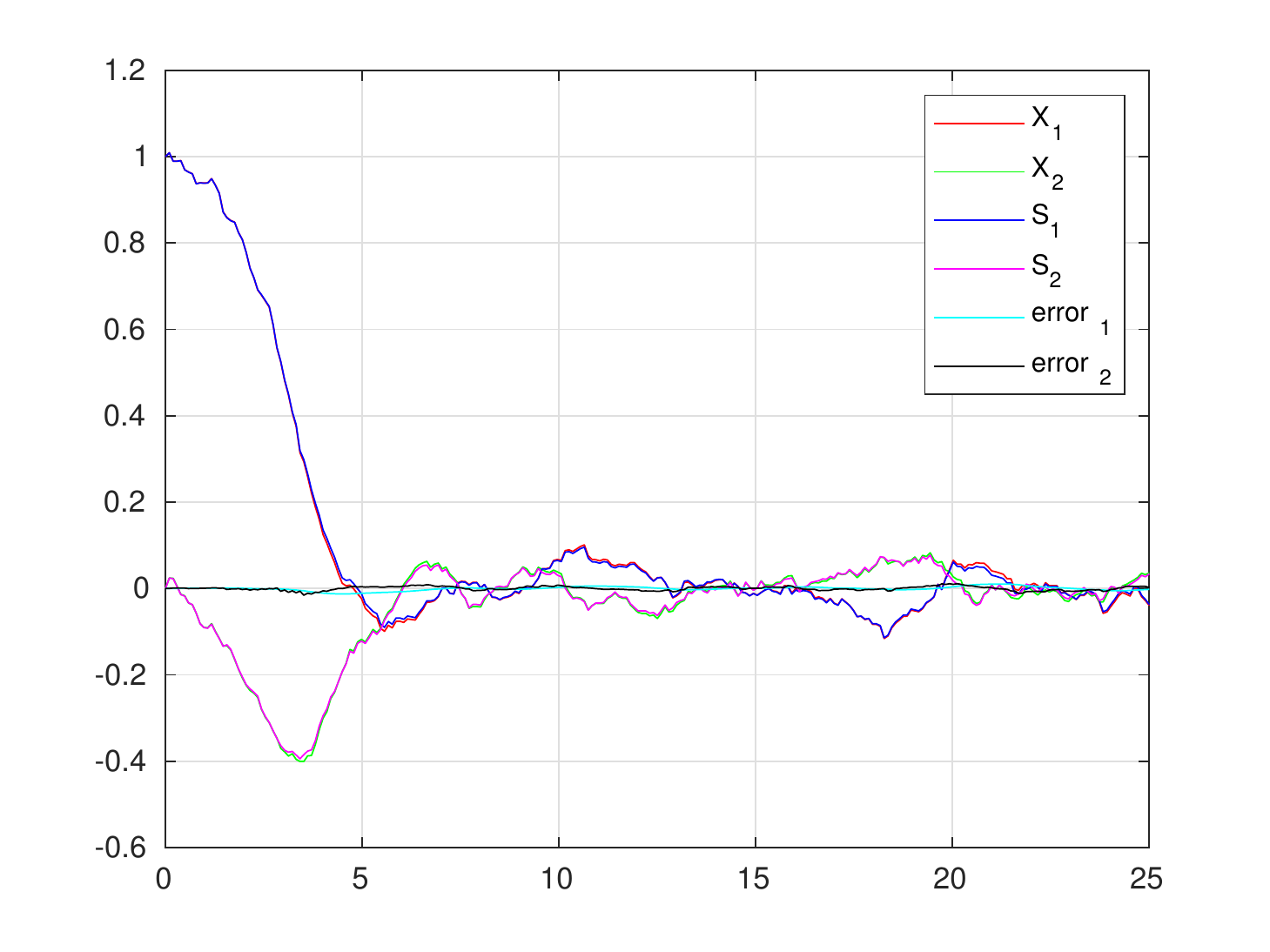}
  \caption{$\eps=2^{-5}$}
  \label{fig:sub1}
\end{subfigure}%
\begin{subfigure}{.5\textwidth}
  \centering
  \includegraphics[width=.97\linewidth]{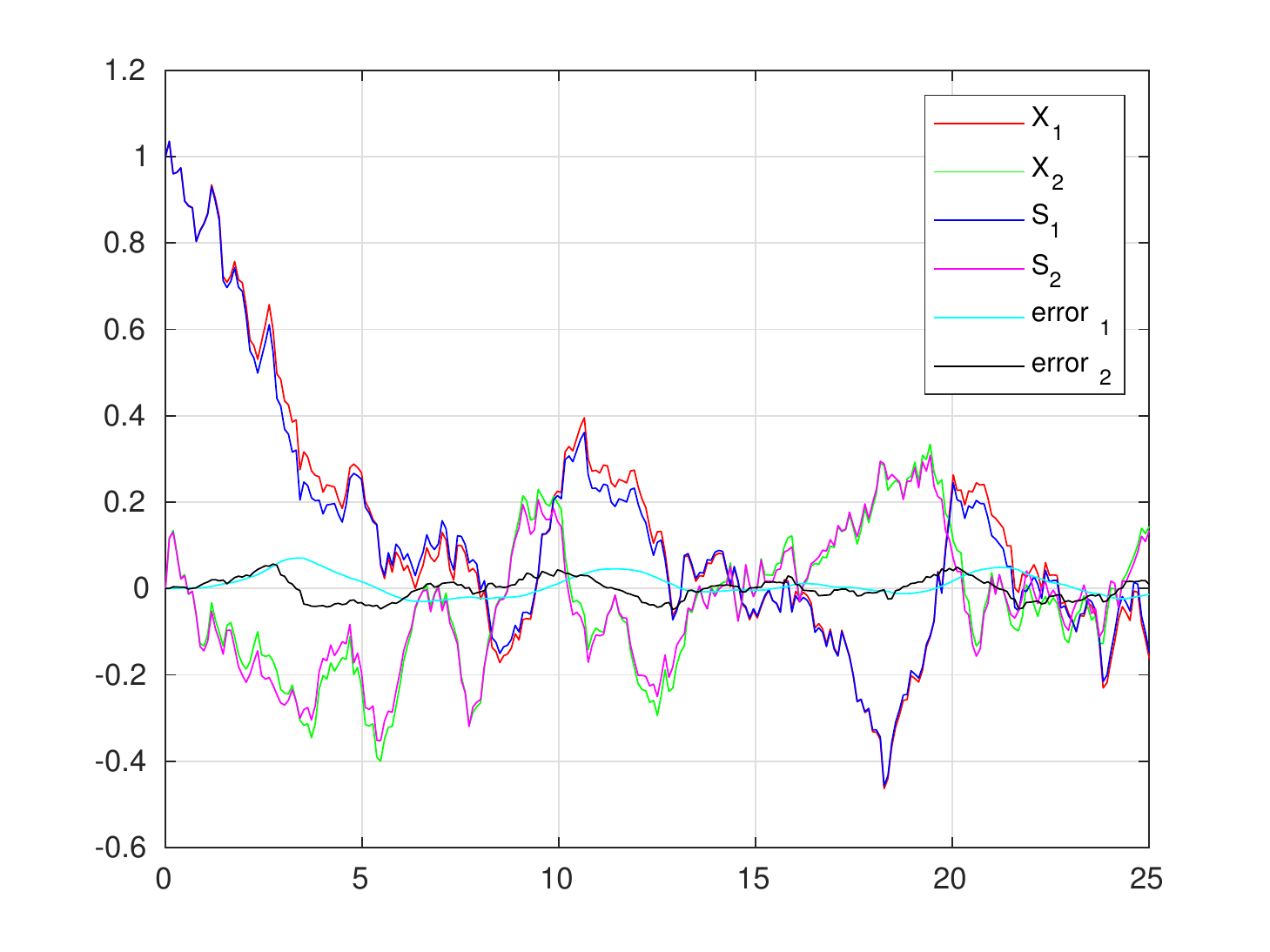}
  \caption{$\eps=2^{-3}$}
  \label{fig:sub2}
\end{subfigure}
\caption{Sample paths of the components $X_i\triangleq X_i^{\eps,\delta}(t),S_i\triangleq S_i^{\eps}(t)\triangleq x_i(t)+\eps Z_i(t)$ and $\text{error}_i \triangleq X_i-S_i$ for $i=1,2$ and $\eps=2^{-5}$ and $\eps=2^{-3}.$}
\label{fig:test}
\end{figure}
\begin{figure}[h!]
\begin{center}
\includegraphics[height=6cm,width=9.6cm]{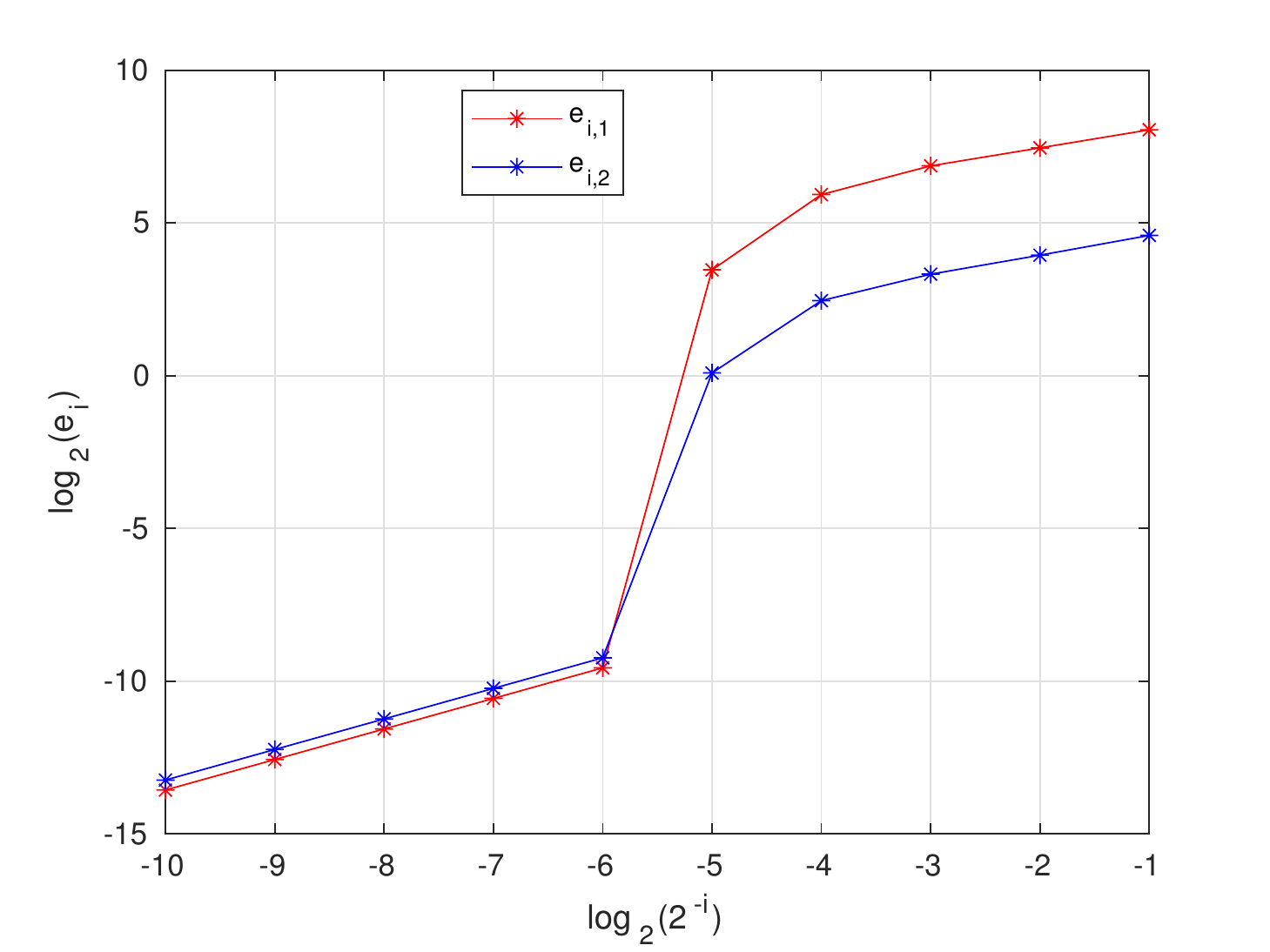}
\caption{The components $(e_{i,1},e_{i,2})$ which are the mean of $|X_i^{\eps,\delta}(T)-S_i^{\eps}(T)|$ over $1000$ sample paths are plotted on a $\log_2$-$\log_2$ scale. The values of $e_{i,j},j=1,2$ decrease with increasing $i$, where the values of $\eps=2^{-i},1\le i \le 10.$}\label{F:Error}
\end{center}
\end{figure}

Next, to explore the dependence of the quantity $\BE\left[\sup_{0\le t \le T}|X_t^{\eps,\delta}-x(t)-\eps Z_t|\right]$ on $\eps$, we fix $\delta$ as above, and 
consider varying values of $\eps=2^{-i},1\le i \le 10$. Once again, we generate 1000 sample paths of the process $X_t^{\eps,\delta}-S_t^{\eps}$. Let the components of vector $e_i \triangleq (e_{i,1},e_{i,2})$ represent the average of $X_j^{\eps,\delta}(T)-S_j^{\eps}(T), j=1,2$. Figure \ref{F:Error} plots the components of $e_i$ against $\eps$ on $\log_2$-$\log_2$ scale, and one can easily see that the $e_i$'s decrease as the values of $\eps$ decreases.

\section{Conclusions}\label{S:conclusion}
In the present paper, we have analyzed the dynamics of a nonlinear differential equation with fast periodic sampling under the influence of small white-noise perturbations. 
For the resulting stochastic process indexed by two small parameters, we have computed the zeroth and first order terms for a perturbation expansion in terms of the small parameters, together with rigorous estimates on the ensuing error. 
The zeroth order term, which describes the mean dynamics, is given by a limiting {\sc ode}, while the first order term, which captures fluctuations about the mean, is given by a linear non-homogeneous {\sc sde}.
This {\sc sde} is found to vary depending on the relative rates at which the two small parameters approach zero. 
Our key finding is the identification of an effective drift term in the limiting {\sc sde} for the fluctuations, which captures the combined effect of small noise and fast sampling. Finally, the theoretical results are illustrated numerically in the context of a control problem. 
The problems studied here suggest several avenues for future exploration.
One possibility, with an eye towards applications in Networked Control Systems \cite{ModlingandAnalysisNCS14}, is to consider the case when state samples are taken at \textit{random} times \cite{TanwaniChatterjeeLiberzon,SCS-IET2018,liu2021event}, e.g., at the event times of a renewal process.
Some preliminary calculations focusing on the embedded discrete-time process (obtained by recording the state at just the sampling instants) were carried out in \cite{dhama2021asymptotic} for the case of linear systems. 
The calculations there used, in an essential way, explicit solution representations for the state process, and the possibility of linking the analysis to some limit theorems for products of random matrices.
It would be interesting to see if these results can be lifted to the full continuous-time state process for both linear and nonlinear systems.

\bibliographystyle{alpha}
\bibliography{Nonlinear}

\end{document}